\newtheorem{theorem}{Theorem}
\newtheorem{proposition}{Proposition}
\newtheorem{lemma}{Lemma}
\newtheorem{remark}{Remark}
\newtheorem{definition}{Definition}
\newcommand{\gap}{\vspace{0.04in}}
\newcommand{\interior}{\operatorname{int}}
\newcommand{\Conv}{\operatorname{con}}
\newcommand{\dom}{\operatorname{dom}}
\newcommand{\dist}{\operatorname{dist}}
\newcommand{\Hausd}{\mathbb{H}}
\newcommand{\D}{\mathbb{D}}
\newcommand{\N}{\mathbb{N}}
\newcommand{\R}{\mathbb{R}}
\newcommand{\ER}{\overline{\R}}
\newcommand{\Min}{\displaystyle\operatornamewithlimits{minimize}}
\newcommand{\Limsup}{\displaystyle\operatornamewithlimits{Lim \, sup}}
\newcommand{\Epiconv}{\overset{\text{e}}{\rightarrow}}
\newcommand{\Contconv}{\overset{\text{c}}{\rightarrow}}
\newcommand{\Pointconv}{\overset{\text{p}}{\rightarrow}}
\DeclareMathOperator*{\argmin}{arg\,min}
\title{Variational Theory and Algorithms for a Class of Asymptotically Approachable Nonconvex Problems}
\author{Hanyang Li
\thanks{Department of Industrial Engineering and Operations Research, University of California, Berkeley,  Berkeley 94720 (\href{mailto:hanyang_li@berkeley.edu}{hanyang\_li@berkeley.edu})}
\and Ying Cui
\thanks{Department of Industrial Engineering and Operations Research, University of California, Berkeley, Berkeley 94720 (\href{mailto:yingcui@berkeley.edu}{yingcui@berkeley.edu})}
}
\begin{document}
\maketitle
\begin{abstract}
    We investigate a class of composite nonconvex functions, where the outer function is the sum of univariate extended-real-valued convex functions and the inner function is the limit of difference-of-convex functions. A notable feature of this class is that the inner function may fail to be locally Lipschitz continuous. It covers a range of important yet challenging applications, including inverse optimal value optimization and problems under  value-at-risk constraints.
    We propose an asymptotic decomposition of the composite function that guarantees epi-convergence to the original function, leading to necessary optimality conditions for the corresponding minimization problem. The proposed decomposition also enables us to design a numerical algorithm such that any accumulation point of the generated sequence, if exists, satisfies the newly introduced optimality conditions. These results expand on the study of so-called amenable functions introduced by Poliquin and Rockafellar in 1992, which are compositions of convex functions with smooth maps, and the prox-linear methods for their minimization. To demonstrate that our algorithmic framework is practically implementable, we further present verifiable termination criteria and preliminary numerical results.

\paragraph{Keywords:}
epi-convergence; optimality conditions; nonsmooth analysis; difference-of-convex functions
\end{abstract}

\section{Introduction.}\label{sec:introduction}

We consider a class of composite optimization problems of the form:
\begin{equation}
\label{eq:cvx_composite}
\begin{array}{rl}
	\Min_{x\in \R^n}\;\,&\;\, \displaystyle\sum_{p=1}^{m} \left[F_p(x) \triangleq \varphi_p \big(f_p(x)\big)\right],
\end{array}
\tag{CP$_0$}
\end{equation}
where for each $p = 1, \cdots, m$, the outer function $\varphi_p:\R \to \R \cup \{+\infty\}$ is proper, convex, lower semicontinuous (lsc), and the inner function $f_p:\R^n \rightarrow \R$ {is not necessarily locally Lipschitz continuous}.

\gap
If each inner function $f_p$ is continuously differentiable, then the objective in \eqref{eq:cvx_composite} belongs to the family of \textsl{amenable functions} under a constraint qualification \cite{poliquin1992amenable, poliquin1993calculus}.
For a thorough exploration of the variational theory of amenable functions, readers are referred to \cite[Chapter 10(F)]{rockafellar2009variational}.
The properties of amenable functions have also led to the development of prox-linear algorithms, where convex subproblems are constructed through the linearization of the inner smooth mapping \cite{fletcher1982model, burke1985descent, burke1995gauss, lewis2016proximal,drusvyatskiy2019efficiency}.

\gap
However, there are various applications of composite optimization problem in the form of \eqref{eq:cvx_composite} where the inner function $f_p$ is nondifferentiable. In the following, we provide two such examples.

\gap\gap
\noindent
\textbf{Example 1.1} (The inverse optimal value optimization).
For $p=1, \cdots, m$, consider the optimal value function
\begin{equation}\label{eq:value_func}
    f_p(x) \triangleq \displaystyle{\inf_{y \in {\R^d}}} \left\{  (c^{\,p} + C^{\,p} x)^\top y + \frac 1 2 \,y^\top Q^{\,p} \, y
    \;\middle|\;  A^{\,p} x + B^{\,p} y \leq b^{\,p}\,\right\} \qquad x\in {\R^n}
\end{equation}
with appropriate dimensional vectors $b^{\,p}$ and $c^{\,p}$, and matrices $A^{\,p}, B^{\,p}, C^{\,p}$ and $Q^{\,p}$. The function $f_p$ is not smooth in general. 
The inverse (multi) optimal value problem \cite{ahmed2005inverse, paleologo2001bandwidth}  finds a vector $x \in {\R^n}$ that minimizes the discrepancy between observed optimal values ${\{\nu_p\}_{p=1}^{m}}$ and true optimal values $\{f_p(x)\}_{p=1}^{m}$ based on a prescribed metric, such as the $\ell_1$-error:
\begin{equation}\label{eq:inverse}
    \Min_{x \in {\R^n}} \;\sum_{p=1}^{m} \left|{\nu_p} - f_p(x)\right|.
\end{equation}
If $f_p$ is real-valued for $p=1, \cdots, m$, one can express problem \eqref{eq:inverse} in the form of \eqref{eq:cvx_composite} by defining the outer function $\varphi_p(t) = |{\nu_p} - t|$.

\gap\gap

\noindent
\textbf{Example 1.2} (The portfolio optimization under a value-at-risk constraint).
Given a random variable $Y$, the Value-at-risk (VaR) of $Y$ at a confidence level $\alpha\in (0,1)$ is defined as $\mbox{VaR}_\alpha(Y) \triangleq {\min}  \left\{\gamma \in \R\, \mid\, \mathbb{P}(Y\leq \gamma ) \geq \alpha\right\}$. 
Let $Z$ be the random return of investments and $c(\cdot,\cdot)$ be a lsc function representing the profit of $Z$ parameterized by $x\in \R^n$. 
An agent's goal is to maximize the expected profit, denoted by $\mathbb{E}[\,c(x,Z)]$, while also controlling the risk via a constraint on $\mbox{VaR}_\alpha[c(x,Z)]$ under a prescribed level $r$. The model can be written as
\begin{equation}\label{eq:risk management}
    {\displaystyle\operatornamewithlimits{minimize}_{x \in \R^n}  \;\; -\mathbb{E}\left[\, c(x,Z) \right]} \quad 
    \mbox{subject to}  \;\; 
    \mbox{VaR}_\alpha[\, c(x,Z)] \, \geq r.
\end{equation}
Define $\delta_A$ as the indicator function of a set $A$, where $\delta_A(t) = 0$ for $t \in A$ and $\delta_A(t) = +\infty$ for $t \notin A$. Problem \eqref{eq:risk management} can then be put into the framework \eqref{eq:cvx_composite} by setting $\varphi_1(t) = -t$, $f_1(x) = \mathbb{E}[\, c(x,Z)]$, $\varphi_2(t) = \delta_{[r, +\infty)} (t)$, and $f_2(x) = \mbox{VaR}_\alpha[\, c(x,Z)]$. We note that the  function $\mbox{VaR}_\alpha[\, c(\cdot,Z)]$ can be nondifferentiable even if the  function $c(\cdot, z)$ is differentiable for {every} $z$.

\gap

{
Due to the nondifferentiablity of the inner function $f_p$ in \eqref{eq:cvx_composite}, the overall objective  is not amenable
and the prox-linear algorithm \cite{fletcher1982model} is not applicable to solve this composite optimization problem. 
In this paper, we develop an algorithmic framework for a subclass of \eqref{eq:cvx_composite}, where each inner function $f_p$, although nondifferentiable, {can be derived from DC functions through a limiting process}. We refer to this class of functions as \textit{approachable difference-of-convex (ADC) functions} (see section \ref{subsec: ADC and properties} for the formal definition). 
It is important to note that ADC functions are ubiquitous. In particular, we will show that the optimal value function $f_p$ in \eqref{eq:value_func} and $\mbox{VaR}_\alpha[\,c(\cdot,Z)]$ in \eqref{eq:risk management} are instances of ADC functions under mild conditions. In fact, based on the result recently shown in \cite{royset2020approximations}, any lsc function is the epi-limit of piecewise affine DC functions.

\gap

With this new class of functions in hand, we have made a first step to understand the variational properties of the composite ADC minimization problem \eqref{eq:cvx_composite}, including an in-depth analysis of its necessary optimality conditions.
The novel optimality conditions are defined through a handy approximation of the subdifferential mapping $\partial f_p$ that explores the ADC structure of $f_p$. Using {the notion of \textsl{epi-convergence}}, we further show that these optimality conditions are necessary conditions for any local solution of \eqref{eq:cvx_composite}. Additionally, we propose a double-loop algorithm to solve \eqref{eq:cvx_composite}, where the outer loop dynamically updates the DC functions approximating each $f_p$, and the inner loop finds an approximate stationary point of the resulting composite DC problem through successive convex approximations. It can be shown that any accumulation point of the sequence generated by our algorithm satisfies the newly introduced optimality conditions.

Our strategy to handle the nondifferentiable and possibly discontinuous inner function $f_p$ through a sequence of DC functions shares certain similarities with the approximation frameworks in the existing literature. For instance, Ermoliev et al. \cite{ermoliev1995minimization} have designed smoothing approximations for lsc functions utilizing convolutions with bounded mollifier sequences, a technique akin to local ``averaging".  
Research has sought to identify conditions that ensure gradient consistency for the smoothing approximation of composite nonconvex functions \cite{chen2012smoothing, burke2013gradient, burke2013epi, burke2017epi}. Notably, Burke and Hoheisel \cite{burke2013epi} have emphasized the importance of epi-convergence for the approximating sequence, a less stringent requirement than the continuous convergence assumed in earlier works \cite{chen2012smoothing, beck2012smoothing}. In recent work, Royset \cite{royset2022consistent} has studied the consistent approximation of the composite optimization in terms of the global minimizers and stationary solutions, where the inner function is assumed to be locally Lipschitz continuous. 
Our notion of subdifferentials and optimality conditions for \eqref{eq:cvx_composite} takes inspiration from these works but adapts to accommodate nonsmooth approximating sequences that exhibit the advantageous property of being DC.

\gap
The rest of the paper is organized as follows. Section \ref{sec2: ADC functions} presents a class of ADC functions and introduces a new associated notion of subdifferential. 
In section \ref{sec3:composite model}, we investigate the necessary optimality conditions for problem \eqref{eq:cvx_composite}. 
Section \ref{sec4:algorithm} is devoted to an algorithmic framework for solving \eqref{eq:cvx_composite} and its convergence analysis to the newly introduced optimality conditions. We also discuss termination criteria for practical implementation in section \ref{subsec:terminate}. Preliminary numerical experiments on the inverse optimal value problems are presented in the last section.

\vskip 0.1in
\noindent
{\bf Notation and Terminology.}
{Let $\|\cdot\|$ denote the Euclidean norm in $\R^n$.} We use the symbol $\mathbb{B}(\bar x, \delta)$ to denote the Euclidean ball $\{x \in \R^n \mid \|x - \bar x\| \leq \delta\}$. The set of nonpositive and nonnegative are {denoted by} $\R_-$ and $\R_+$, respectively, and the set of nonnegative integers is {denoted by} $\mathbb{N}$. We write $\mathbb{N}_\infty^\sharp\triangleq \{N \subset \mathbb{N} \,\mid\, N \mbox{ infinite}\}$ {and $\mathbb{N}_\infty \triangleq \{ N \,\mid\, \mathbb{N}\;\backslash N \text{ finite} \}$}. 
Notation $\{t^k\}$ is used to simplify the expression of any sequence $\{t^k\}_{k \in \N}$, where the elements can be points, sets, or functions. 
By $t^k \to t$ and $t^k \to_N t$, we mean that the sequence $\{t^k\}$ and the subsequence $\{t^k\}_{k \in N}$ indexed by $N \in \mathbb{N}^\sharp_\infty$ converge to $t$, respectively.

\gap
Given  two sets $A$ and $B$ in $\R^n$ {and a scalar $\lambda \in \R$}, the Minkowski sum and the scalar multiple are defined as $A + B \triangleq \{a + b \mid a \in A, b \in B\}$ and $\lambda \, A \triangleq \{\lambda \, a \mid a \in A\}$. We also define $0 \cdot \emptyset = \{0\}$ and $\lambda \cdot \emptyset = \emptyset$ whenever $\lambda \neq 0$. When $A$ and $B$ are nonempty and closed, we define the one-sided deviation of $A$ from $B$ as $\D (A, B) \,\triangleq\,  \sup_{x \in A} \; \dist(x, B)$, where $\dist(x, B) \triangleq \inf_{y \in B} \| y - x \|$. The Hausdorff distance between $A$ and $B$ is given by  $\Hausd (A, B) \triangleq \max\{\D(A, B), \,\D(B, A)\}$. 
The boundary and interior of $A$ are denoted by $\operatorname{bdry}(A)$ and $\interior(A)$. The topological closure and the convex hull of $A$ are indicated by $\operatorname{cl}(A)$ and $\Conv A$.

\gap
For a sequence of sets $\{C^k\}$, we define its outer limit as
\[
    \Limsup_{k \rightarrow +\infty} C^k \triangleq \{ u \mid \exists \, N \in \mathbb{N}_\infty^\sharp,\, u^k \to_N u \text{ with } u^k \in C^k \},
\]
and the horizon outer limit as
\[
    {\Limsup_{k \rightarrow +\infty}}^\infty \, C^k \triangleq \{0\} \cup \left\{u \mid \exists \, N \in \mathbb{N}_\infty^\sharp,\, \lambda_k \downarrow 0,\,\lambda_k u^k \rightarrow_N u  \text{ with }  u^k \in C^k\right\}.
\]
The outer limit of a set-valued mapping $S: \R^n \rightrightarrows \R^m$ is defined as
\[
    \displaystyle\Limsup_{x \rightarrow \bar x} S(x) \triangleq \bigcup_{x^k \rightarrow \bar x} \Limsup_{k \rightarrow +\infty} S(x^k) = \{ u \mid \exists \, x^k \rightarrow \bar x,\, u^k \rightarrow u \text{ with } u^k \in S(x^k)\} \quad \bar{x}\in \R^n.
\]
We say $S$ is outer semicontinuous (osc) at $\bar x \in \R^n$ if $\operatorname{Lim \, sup}_{x \rightarrow \bar x} S(x) \subset S(\bar x)$.
Consider some index set $N \in \N^{\sharp}_{\infty}$. A sequence of sets $\{C^k\}_{k \in N}$ is {equi-bounded} if there exists a bounded set $B$ such that $C^k \subset B$ for all $k \in N$. Otherwise, the sequence is {unbounded}. If there is an integer $K \in N$ such that $\{C^k\}_{k \in N, k \geq K}$ is equi-bounded, then the sequence $\{C^k\}_{k \in N}$ is said to be {eventually bounded}. Interested readers are referred to \cite[Chapter 4]{rockafellar2009variational} for a comprehensive study of set convergence.

\gap
The regular normal cone and the limiting normal cone of a set $C \subset \R^n$ at $\bar x \in C$ are given by
\[
    \widehat{\mathcal{N}}_C(\bar x) \triangleq \left\{v \,\middle|\, v^\top(x - \bar x) \leq o(\|x - \bar x\|) \text{ for all } x \in C\right\}\quad \mbox{and}\quad
    \mathcal{N}_C(\bar x) \triangleq \Limsup_{x(\in C)\rightarrow \bar x} \widehat{\mathcal{N}}_C(x).
\]
The proximal normal cone of a set $C$ at $\bar x \in C$ is defined as $\mathcal{N}^p_C(\bar x) \triangleq \{\lambda(x - \bar x) \mid \bar x \in P_C(x), \lambda \geq 0\}$, where $P_C$ is the projection  onto $C$ that maps any $x$ to the set of points in $C$ that are closest to $x$.

\gap
For an extended-real-valued function $f:\R^n \rightarrow \ER \triangleq \R \cup \{\pm\infty\}$, we write its  effective domain as $\dom f \triangleq \{x \in \R^n \mid f(x) < +\infty\}$, and the epigraph as $\operatorname{epi} f \triangleq \{(x, \alpha)\in \R^{n+1} \mid \alpha \geq f(x)\}$. We say $f$ is proper if $\dom f$ is nonempty and $f(x) > -\infty$ for all $x \in \R^n$. {We adopt the common rules for extended arithmetic operations, {and} the lower and upper limits of a sequence of scalars in $\ER$ (cf. \cite[Chapter 1(E)]{rockafellar2009variational}).}

\gap
Let $f: \R^n \to \ER$ be a proper function. We write $x \rightarrow_f \bar x$, if $x \rightarrow \bar{x} \text{ and } f(x) \rightarrow f(\bar{x})$. The regular subdifferential and the limiting subdifferential of $f$ at $\bar x  \in \dom f$ are respectively defined as
\[
    \widehat\partial f(\bar x) \triangleq \{v \mid f(x) \geq f(\bar x) + v^\top(x - \bar x) + o(\|x - \bar x\|) \text{ for all } x \}\quad \mbox{and}\quad
    \partial f(\bar x) \triangleq \Limsup_{x \rightarrow_f \bar x} \widehat\partial f(x).
\]
For any $\bar x \notin \dom f$, we set $\widehat\partial f(\bar x) = \partial f(\bar x) = \emptyset$. When $f$ is locally Lipschitz continuous at $\bar{x}$, $\Conv\partial f(\bar x)$ equals to the Clarke subdifferential $\partial_{C} f(\bar x)$. We further say $f$ is subdifferentially regular at $\bar{x}\in \dom f$ if $f$ is lsc at $\bar{x}$ and $\widehat\partial f(\bar{x}) = \partial f(\bar{x})$.  When $f$ is proper and convex, $\widehat\partial f$, $\partial f$, and $\partial_{C} f$ coincide with the concept of the subdifferential in convex analysis.

\gap
Finally, we introduce the notion of function convergence. A sequence of functions $\{f^k: \R^n \to \ER\}$ is said to converge pointwise to $f: \R^n \rightarrow \ER$, written $f^k \Pointconv f$, if $\lim_{k \to +\infty} f^k(x) = f(x)$ for any $x \in \R^n$. 
The sequence $\{f^k\}$ is said to epi-converge to $f$, written $f^k \Epiconv f$, if for any $x$, it holds
	\[
		\left\{\begin{array}{ll}
        \displaystyle\liminf_{k \rightarrow +\infty} f^k(x^k) \geq f(x) &\;\text{for every sequence }x^k \rightarrow x,\\
        \displaystyle\limsup_{k \rightarrow +\infty} f^k(x^k) \leq f(x) &\;\text{for some sequence }x^k \rightarrow x.
        \end{array}\right.
	\]
The sequence $\{f^k\}$ is said to converge continuously to $f$, written $f^k \Contconv f$, if $\lim_{k \rightarrow +\infty} f^k(x^k) = f(x)$ for any $x$ and any sequence $x^k \rightarrow x$.

\section{Approachable difference-of-convex functions.}
\label{sec2: ADC functions}
In this section, we formally introduce a class of functions that can be asymptotically approximated by DC functions. A new concept of subdifferential that is defined through the approximating functions is proposed. At the end of this section, we provide several examples that demonstrate  the introduced concepts.

\subsection{Definitions and properties.}
\label{subsec: ADC and properties}

An extended-real-valued function can be approximated by a sequence of functions in various {notions of convergence}, as comprehensively investigated in \cite[Chapter 7(A-C)]{rockafellar2009variational}. Among these approaches, epi-convergence has a notable advantage in its ability to preserve the global minimizers \cite[Theorem 7.31]{rockafellar2009variational}.  Our focus lies on a particular class of approximating functions, wherein each function exhibits a DC structure. 

\begin{definition}
    A function $f$ is said to be \textsl{DC on its domain} if there exist proper, lsc and  convex functions $g, h: \R^n \to \ER$ such that $\dom f = [\,\dom g \cap \dom h\,]$ and $f(x) = g(x) - h(x)$ for any $x \in \dom f$.
\end{definition}

\noindent With this definition, we introduce the concept of ADC functions.

\begin{definition}[ADC functions]
\label{def:ADC}
    Let $f: \R^n \rightarrow \ER$ be a proper function.\\
    (a) $f$ is said to be \textsl{pointwise approachable DC (p-ADC)} if there exist proper functions $\{f^k: \R^n \to \ER\}$, DC on their respective domains, such that $f^k \Pointconv f$.\\
    (b) $f$ is said to be \textsl{epigraphically approachable DC (e-ADC)} if there exist proper functions $\{f^k: \R^n \to \ER\}$, DC on their respective domains, such that $f^k \Epiconv f$.\\
    (c) $f$ is said to be \textsl{continuously approachable DC (c-ADC)} if there exist proper functions $\{f^k: \R^n \to \ER\}$, DC on their respective domains, such that $f^k \Contconv f$.

{A function $f$ is said to be ADC associated with $\{f^k\}$ if $\{f^k\}$ confirms one of these convergence properties.}
By a slight abuse of notation, we denote the DC {decomposition} of each $f^k$ as $f^k=g^k-h^k$, although the equality may only hold for $x \in \dom f^k$.
\end{definition}

A p-ADC function may not be lsc. An example is given by $f(x) = \mathbf{1}_{\{0\}}(x) + 2 \cdot \mathbf{1}_{(0,+\infty)}(x)$, where for a set $C \subset \R^n$, we write $\mathbf{1}_{C}(x) = 1$ if $x \in C$ and $\mathbf{1}_{C}(x) = 0$ if $x \notin C$. In this case, $f$ is not lsc at $x=0$. However, $f$ is p-ADC associated with
$f^k(x) = \max\left(\, 0, \, 2kx+ 1\, \right) - \max\left(\, 0, \,2kx-1 \, \right)$.
In contrast, any e-ADC function must be lsc \cite[Proposition 7.4(a)]{rockafellar2009variational}, and any c-ADC function is continuous \cite[Theorem 7.14]{rockafellar2009variational}.

\gap
The relationships among different notions of function convergence, including the unaddressed uniform convergence, have been thoroughly examined in \cite{rockafellar2009variational}. Generally, pointwise convergence and epi-convergence do not imply one another, but they coincide when the sequence $\{f^k\}$ is asymptotically equi-lsc everywhere \cite[Theorem 7.10]{rockafellar2009variational}.
In addition, {$\{f^k\}$ converges continuously} to $f$ if and only if both $f^k \Epiconv f$ and $(-f^k) \Epiconv (-f)$ are satisfied \cite[Theorem 7.11]{rockafellar2009variational}. 
While verifying epi-convergence is often challenging, it becomes simpler for a monotonic sequence $\{f^k\}$ that converges pointwise to $f$ \cite[Proposition 7.4(c-d)]{rockafellar2009variational}.


\subsection{Subdifferentials of ADC functions.}
\label{sub: subgradients}

Characterizing the limiting and Clarke subdifferentials can be challenging when dealing with functions that exhibit  complex composite structures. Our focus in this subsection is on { numerically computable approximations of the limiting subdifferentials}. We begin with the definitions.

\begin{definition}[{approximate} subdifferentials]
\label{df:appro_subdiff}
	Consider an ADC function $f: \R^n \rightarrow \ER$ associated with $\{f^k= g^k - h^k\}$. 
	The \textsl{{approximate subdifferential}} of $f$ (associated with $\{f^k=g^k-h^k\}$) at $\bar x \in \R^n$ is defined as
	\[
	\begin{array}{rl}
		\partial_{A} f(\bar x)
		& \triangleq \, \bigcup\limits_{ x^k \rightarrow \bar x} \Limsup_{k \rightarrow +\infty} \;\big[\partial g^k(x^k) - \partial h^k(x^k)\big].
	\end{array}
	\]
	The \textsl{{approximate horizon subdifferential}} of $f$ (associated with $\{f^k=g^k-h^k\}$) at $\bar x \in \R^n$ is defined as
	\[
		\partial^\infty_{A} f(\bar x)
		 \triangleq \, \bigcup\limits_{ x^k \rightarrow \bar x}{\Limsup_{\,k \rightarrow +\infty}}^\infty \;\big[\partial g^k(x^k) - \partial h^k(x^k)\big].
	\]
\end{definition}

Unlike the limiting subdifferential which requires $x^k \rightarrow_f \bar x$, $\partial_{A} f(x)$ is defined using {all the} sequences $x^k \rightarrow \bar x$ without necessitating the convergence of function values.
It follows directly from the definitions that the mappings $x \mapsto \partial_A f(x)$ and $x \mapsto \partial_A^\infty f(x)$ are osc. The following proposition presents a sufficient condition for $\partial_{A} f(\bar x) = \partial f(\bar x) = \emptyset$ at any $\bar x \notin \dom f$.

\begin{proposition}
\label{prop:approach_osc}
    Let $\bar x \notin \dom f$. Then $\partial_{A} f(\bar x) = \emptyset$
    if for any sequence $x^k \rightarrow \bar x$, we have $x^k \notin \dom f^k$ for all sufficiently large $k$. The latter condition is particularly satisfied whenever $\dom f$ is closed and $\dom f^k \subset \dom f$ for all sufficiently large $k$.
\end{proposition}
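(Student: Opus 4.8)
The plan is to show that every set entering the union that defines $\partial_A f(\bar x)$ is empty, so that the union itself is empty. Fix an arbitrary sequence $x^k \to \bar x$. Under the stated hypothesis there is an index $K$ with $x^k \notin \dom f^k$ for all $k \geq K$. Since each $f^k$ is DC on its domain with $\dom f^k = \dom g^k \cap \dom h^k$, the relation $x^k \notin \dom f^k$ forces $x^k \notin \dom g^k$ or $x^k \notin \dom h^k$ for every such $k$.

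The key step is then to recall that the subdifferential of a proper convex function is empty outside its effective domain, so $\partial g^k(x^k) = \emptyset$ whenever $x^k \notin \dom g^k$, and likewise for $h^k$. Invoking the extended-arithmetic conventions for Minkowski operations adopted in the Notation section — in particular that the sum, hence the difference, of any set with the empty set is empty — I conclude that $\partial g^k(x^k) - \partial h^k(x^k) = \emptyset$ for all $k \geq K$. A sequence of sets that is eventually empty has empty outer limit: by the definition of $\Limsup_{k}$, membership would require selecting $u^k \in \partial g^k(x^k) - \partial h^k(x^k)$ along an infinite index set $N \in \N_\infty^\sharp$, which is impossible once these sets vanish for large $k$. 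Hence $\Limsup_{k \to +\infty}\,[\partial g^k(x^k) - \partial h^k(x^k)] = \emptyset$, and since the sequence $x^k \to \bar x$ was arbitrary, taking the union over all such sequences yields $\partial_A f(\bar x) = \emptyset$.

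For the final assertion, suppose $\dom f$ is closed and $\dom f^k \subset \dom f$ for all $k$ beyond some $K_0$. I would verify the hypothesis of the first part by contradiction: given $x^k \to \bar x$, if $x^k \in \dom f^k$ were to hold for $k$ in some infinite index set $N$, then for $k \in N$ with $k \geq K_0$ we would have $x^k \in \dom f^k \subset \dom f$; closedness of $\dom f$ together with $x^k \to_N \bar x$ would place $\bar x \in \dom f$, contradicting $\bar x \notin \dom f$. Thus $x^k \notin \dom f^k$ eventually, and the first part applies.

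I do not anticipate a genuine obstacle; the argument is essentially bookkeeping. The only points demanding care are the conventions at the boundary of the domain — that $\partial g^k$ and $\partial h^k$ are empty off their domains, that the Minkowski difference inherits emptiness from either operand, and that an eventually-empty sequence of sets has empty outer limit — each of which follows directly from the definitions recalled in the Notation section.
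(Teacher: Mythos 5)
Your proof is correct and follows essentially the same route as the paper's: both arguments reduce to observing that $x^k \notin \dom f^k = \dom g^k \cap \dom h^k$ forces $\partial g^k(x^k) - \partial h^k(x^k) = \emptyset$ for all large $k$, so the outer limit along every sequence $x^k \to \bar x$ is empty. Your additional verification of the closedness remark is the obvious intended argument and is fine.
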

\begin{proof}
    Note that for any $x^k \rightarrow \bar x \notin \dom f$, we have $[\partial g^k(x^k) - \partial h^k(x^k)] = \emptyset$ for all sufficiently large $k$ due to $x^k \notin \dom f^k = [\dom g^k \cap \dom h^k]$. Thus, $\partial_{A} f(\bar x) = \emptyset$ for any $\bar x \notin \dom f$. 
\end{proof}

In the subsequent analysis, we restrict our attention to $\bar x \in \dom f$. Admittedly, the set $\partial_A f(\bar x)$ depends on {the approximating sequence $\{f^k\}$ and} the  DC decomposition of each $f^k$,  which may contain irrelevant information concerning the local geometry of $\operatorname{epi} f$. In fact, for a given ADC function $f$, we can make the set $\partial_A f(\bar x)$ arbitrarily large by adding the same nonsmooth functions to both $g^k$ and $h^k$.
By Attouch's theorem (see for example \cite[Theorem 12.35]{rockafellar2009variational}),  for  proper, lsc, convex functions $f$ and $\{f^k\}$, if $f^k \Epiconv f$, we immediately have $\partial_{A} f = \partial f$ when taking $g^k=f^k$ and $h^k=0$.
In what follows, we further explore the relationships among $\partial_{A} f$ and other commonly employed subdifferentials in the literature beyond the convex setting. As it turns out, with respect to an arbitrary DC function $f^k$ that is lsc, $\partial_{A} f(\bar x)$ contains the limiting subdifferential of $f$ at any $\bar x \in \dom f$ whenever $f^k \Epiconv f$. 

\begin{theorem}[subdifferentials relationships]
\label{thm:eADC_subdiff}
	Consider an ADC function $f: \R^n \to \ER$. The following statements hold for any $\bar x \in \dom f$.\\
	(a) If $f$ is e-ADC associated with $\{f^k\}$ and $f^k$ is lsc, then $\partial f(\bar x) \subset \partial_A f(\bar x)$ and $\partial^\infty f(\bar x) \subset \partial^\infty_A f(\bar x)$.\\
	(b) If $f$ is locally Lipschitz continuous and bounded from below, then there exists a sequence of DC functions $\{f^k\} $ such that $f^k \Contconv f$, $\partial f(\bar x) \subset \partial_A f(\bar x) \subset \partial_C f(\bar x)$, and $\partial^\infty_A f(\bar x) = \{0\}$. Consequently, $\Conv \partial_{A} f(\bar x) = \partial_{C} f(\bar x)$, the set $\partial_{A} f(\bar x)$ is nonempty and  bounded, and $\partial f(\bar x) = \partial_{A} f(\bar x)$ when $f$ is subdifferentially regular at $\bar x$.
\end{theorem}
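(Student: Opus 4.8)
The plan is to build the DC sequence from the Moreau envelope of $f$ and then read off each subdifferential relation from the structure of that envelope. I would fix $\lambda_k \downarrow 0$ and set $f^k = e_{\lambda_k} f$, where $e_\lambda f(x) \triangleq \inf_{y \in \R^n}\{f(y) + \frac{1}{2\lambda}\|y - x\|^2\}$. Since $f$ is bounded below, the infimum is finite and attained, and completing the square exhibits the canonical DC decomposition $f^k = g^k - h^k$ with $g^k \triangleq \frac{1}{2\lambda_k}\|\cdot\|^2$ (convex and smooth) and $h^k(x) \triangleq \sup_{y}\{\langle x, y\rangle/\lambda_k - \|y\|^2/(2\lambda_k) - f(y)\}$ (proper, lsc, convex, and finite-valued precisely because $f$ is bounded below). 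For continuous convergence I would invoke local Lipschitz continuity: on a neighborhood of $\bar x$ with modulus $L$ one has the uniform two-sided estimate $0 \le f(x) - e_{\lambda_k}f(x) \le \lambda_k L^2/2$ (the lower bound from $f(y) \ge f(x) - L\|y-x\|$ and minimizing the resulting quadratic, with boundedness below keeping the proximal points in a fixed neighborhood). Combined with continuity of $f$, this gives $e_{\lambda_k}f(x^k) \to f(\bar x)$ for every $x^k \to \bar x$, i.e. $f^k \Contconv f$.

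With the sequence in hand, the inclusion $\partial f(\bar x) \subset \partial_A f(\bar x)$ comes for free: continuous convergence implies epi-convergence, so $f$ is e-ADC associated with $\{f^k\}$, each $f^k$ is lsc, and part (a) of this theorem applies. The identity I would lean on repeatedly is that, because $g^k$ is smooth, the exact sum rule gives $\partial(-f^k)(x) = \partial h^k(x) - \nabla g^k(x)$, hence
\[
\partial g^k(x) - \partial h^k(x) = -\,\partial(-f^k)(x).
\]
Since $-f^k$ is $L$-Lipschitz near $\bar x$, its limiting subdifferential lies in $\mathbb{B}(0,L)$, so the sets $\partial g^k(x^k) - \partial h^k(x^k)$ are uniformly bounded by $L$ as $x^k \to \bar x$. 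This at once yields that $\partial_A f(\bar x)$ is bounded and that $\partial^\infty_A f(\bar x) = \{0\}$, since uniform boundedness leaves no nonzero horizon direction.

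The main work is $\partial_A f(\bar x) \subset \partial_C f(\bar x)$. Given $w \in \partial_A f(\bar x)$, I would pick $x^k \to \bar x$ and $w^k \to w$ with $w^k \in \partial g^k(x^k) - \partial h^k(x^k) = -\partial(-f^k)(x^k)$. For the Lipschitz function $f^k$, the gradient formula for the limiting subdifferential writes $w^k$ as a limit of gradients $\nabla f^k(z)$ at differentiability points $z \to x^k$, and at such a point $\nabla f^k(z) = \lambda_k^{-1}(z - p)$ for a proximal point $p$, which satisfies the first-order condition $\lambda_k^{-1}(z - p) \in \partial f(p)$ with $\|z - p\| \le \lambda_k L$. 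A diagonal selection then produces $z^k \to \bar x$ and proximal points $p^k \to \bar x$ with $\nabla f^k(z^k) \to w$ and $\nabla f^k(z^k) \in \partial f(p^k) \subset \partial_C f(p^k)$; osc of the Clarke subdifferential of the locally Lipschitz $f$ then delivers $w \in \partial_C f(\bar x)$. I expect this diagonalization together with the bookkeeping that differentiability points and their proximal points all converge to $\bar x$ to be the delicate part.

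The remaining ``consequently'' clauses should follow formally. From $\partial f(\bar x) \subset \partial_A f(\bar x) \subset \partial_C f(\bar x)$ and the representation $\partial_C f(\bar x) = \Conv \partial f(\bar x)$ for locally Lipschitz $f$, taking convex hulls sandwiches $\Conv \partial_A f(\bar x) = \partial_C f(\bar x)$; nonemptiness of $\partial_A f(\bar x)$ follows from nonemptiness of $\partial f(\bar x)$, and boundedness from the $L$-bound above. Finally, when $f$ is subdifferentially regular at $\bar x$ one has $\partial f(\bar x) = \partial_C f(\bar x)$, so the sandwich collapses to $\partial f(\bar x) = \partial_A f(\bar x)$.
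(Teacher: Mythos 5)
Your construction is the same as the paper's (Moreau envelope with the decomposition $g^k=\|\cdot\|^2/(2\lambda_k)$, $h^k$ the conjugate-type supremum), and your treatment of continuous convergence, of the boundedness of the sets $\partial g^k(x^k)-\partial h^k(x^k)$, and of $\partial_A^\infty f(\bar x)=\{0\}$ all match the paper in substance. (You also silently take part (a) as given and only argue part (b); the paper proves (a) separately via \cite[corollary 8.47]{rockafellar2009variational} and the sum rule $\widehat\partial f^k\subset\partial g^k-\partial h^k$.)

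There is, however, a genuine gap in your key step $\partial_A f(\bar x)\subset\partial_C f(\bar x)$. You assert that any $w^k\in\partial g^k(x^k)-\partial h^k(x^k)=-\partial(-f^k)(x^k)$ can be written as a limit of gradients $\nabla f^k(z)$ at differentiability points $z\to x^k$. This is false: since $g^k$ is smooth, $-\partial(-f^k)(x^k)=\nabla g^k(x^k)-\partial h^k(x^k)$, and $\partial h^k(x^k)$ is the \emph{full convex subdifferential} of the convex function $h^k$, i.e.\ the convex hull of limits of gradients, not merely the set of such limits. Concretely, take $f(y)=-\min\{|y|,1\}$ and $\bar x=0$: one computes $e_{\lambda}f(x)=-|x|-\lambda/2$ near $0$, so $\partial g^k(0)-\partial h^k(0)=-\partial(|\cdot|)(0)=[-1,1]$, yet the gradients of $f^k$ near $0$ are only $\pm1$; the element $w^k=0$ is not a limit of gradients. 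Note also that if your claim were true, your chain $\nabla f^k(z^k)\in\partial f(p^k)$ plus outer semicontinuity would deliver $w\in\partial f(\bar x)$, i.e.\ $\partial_A f(\bar x)\subset\partial f(\bar x)$, which is false in this example ($\partial f(0)=\{-1,1\}$ while $0\in\partial_A f(0)$). The repair is exactly the paper's argument: first establish the set inclusion $\partial g^k(x)-\partial h^k(x)\subset\Conv\bigcup\{\partial f(z)\mid z\in P_{\lambda_k f}(x)\}$ (via the parametric-minimization subdifferential formula applied to $-h^k$ and $-\partial h^k=\Conv\partial(-h^k)$), then write each $w^k$ by Carath\'eodory's theorem as $\sum_{i=1}^{n+1}\lambda_{k,i}v^{k,i}$ with $v^{k,i}\in\partial f(z^{k,i})$, $z^{k,i}\in P_{\lambda_k f}(x^k)$, extract convergent subsequences of the (bounded) weights and vectors, and use outer semicontinuity of $\partial f$ to land in $\Conv\partial f(\bar x)=\partial_C f(\bar x)$. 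The ``consequently'' clauses then follow as you describe.
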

\begin{proof}
    (a) {Let $g^k - h^k$ be a DC decomposition of $f^k$.} Since $f$ is e-ADC, it must be lsc \cite[Proposition 7.4(a)]{rockafellar2009variational}. {Using epi-convergence of $\{f^k\}$ to $f$, we know from \cite[corollary 8.47(b)]{rockafellar2009variational} and \cite[Proposition 8.46(e)]{rockafellar2009variational} that any element of $\partial f(\bar x)$ can be generated as a limit of regular subgradients at $x^k$ with $x^k \rightarrow_N \bar{x}$ and $f^k(x^k) \rightarrow_N f(\bar x)$ for some  $N \in \mathbb{N}_\infty$}. Indeed, we can further restrict  $x^k \in \dom f^k$ since $f^k(x^k) \rightarrow_N f(\bar x)$ and $\bar x \in \dom f$. Then, we have
    \[
		\partial f(\bar x) \subset
		\bigcup\limits_{x^k (\in \dom f^k) \rightarrow \bar x} \Limsup_{k \rightarrow +\infty} {\widehat\partial} f^k(x^k) \subset
		\bigcup\limits_{x^k (\in \dom f^k) \rightarrow \bar x} \Limsup_{k \rightarrow +\infty} \big[\partial g^k(x^k) - \partial h^k(x^k)\big] \subset \partial_{A} f(\bar x),
    \]
    {where the second inclusion can be verified as follows: Firstly, due to the lower semicontinuity of $f^k$ and $h^k$,  and $x^k \in \dom f^k \subset \dom g^k$, it follows from the sum rule of regular subdifferentials \cite[corollary 10.9]{rockafellar2009variational} that $\widehat\partial g^k(x^k) \supset \widehat\partial f^k(x^k) + \widehat\partial h^k(x^k)$. Consequently, $\widehat\partial f^k(x^k) \subset \widehat\partial g^k(x^k) - \widehat\partial h^k(x^k) = \partial g^k(x^k) - \partial h^k(x^k)$ since $g^k$ and $h^k$ are proper and convex \cite[Proposition 8.12]{rockafellar2009variational}.} Similarly, by \cite[corollary 8.47(b)]{rockafellar2009variational}, we have
    {
    \[
        \partial^\infty f(\bar x)
        \subset \bigcup\limits_{x^k (\in \dom f^k) \rightarrow \bar x} {\displaystyle\Limsup_{k \rightarrow +\infty}}^\infty\;{\widehat\partial} f^k(x^k)
	\subset \bigcup\limits_{x^k (\in \dom f^k) \rightarrow \bar x} {\displaystyle\Limsup_{k \rightarrow +\infty}}^\infty \big[\partial g^k(x^k) - \partial h^k(x^k)\big] \subset \partial^\infty_{A} f(\bar x).
    \]}
    
    (b) For a locally Lipschitz continuous function $f$, consider its Moreau envelope $e_{\gamma} f(x) \triangleq \inf_{z} \{f(z) + \|z - x\|^2/(2\gamma)\}$ and the set-valued mapping $P_{\gamma f}(x) \triangleq \operatornamewithlimits{argmin}_{z} \{f(z) + \|z - x\|^2/(2\gamma)\}$. For any  sequence $\gamma_k \downarrow 0$, we demonstrate in the following that $\{f^k \triangleq e_{\gamma_k} f \}$ is the desired sequence of approximating functions. Firstly, since $f$ is bounded from below, it must be prox-bounded and, thus, each $f^k$ is continuous and $f^k(\bar x) \uparrow f(\bar x)$ for all $\bar x$ (cf. \cite[Theorem 1.25]{rockafellar2009variational}). By the continuity of $f$ and $f^k$, we have $f^k \Contconv f$ from \cite[Proposition 7.4(c-d)]{rockafellar2009variational}. 
  It then follows from part (a) that $\partial f(\bar x) \subset \partial_A f(\bar x)$. Consider the following DC decomposition of each $f^k$:
    \[
        f^k(x) = \underbrace{\frac{\|x\|^2}{2\gamma_k}}_{\triangleq g^k(x)} - \underbrace{\sup_{z \in \R^n} \left\{-f(z)  - \frac{\|z\|^2}{2 \gamma_k} + \frac{z^\top x}{\gamma_k} \right\}}_{\triangleq h^k(x)} \qquad x \in \R^n.
    \]
    {It is clear that $f(z) + \|z\|^2/(2\gamma_k) + z^\top x/\gamma_k$ is level-bounded in $z$ locally uniformly in $x$, since for any $r \in \R$ and any bounded set $X \subset \R^n$, the set
    \[
    	\left\{ z \in \R^n \middle|\, x \in X, f(z) + \frac{\|z\|^2}{2 \gamma_k} - \frac{z^\top x}{\gamma_k} \leq r\right\}
    	\subset \left\{ z \in \R^n \middle|\, x\in X, \|z - x\|^2 \leq \|x\|^2 + 2\gamma_k \left[r - \inf_{z} f(z)\right] \right\}
    \]
    is bounded.} Due to the level-boundedness condition, we can apply the subdifferential formula of the parametric minimization \cite[Theorem 10.13]{rockafellar2009variational} to get
    \[
        \partial(-h^k)(x)
        \subset \bigcup_{z \in P_{\gamma_k f}(x)} \left\{y \,\middle|\, (0,y) \in \partial_{(z,x)}\left(f(z)+ \frac{\|z\|^2}{2 \gamma_k} - \frac{z^\top x}{\gamma_k} \right)\right\}
        {\subset} \bigcup_{z \in P_{\gamma_k f}(x)}\left\{\partial f(z) - \frac{x}{\gamma_k}\right\},
    \]
    {where the last inclusion is due to the calculus rules \cite[Proposition 10.5 and exercise 8.8(c)]{rockafellar2009variational}.} Since $h^k$ is convex, we have $-\partial h^k(x) = \partial_C (-h^k)(x) = \Conv \partial(-h^k)(x)$ by \cite[Theorem 9.61]{rockafellar2009variational}, which further yields that
    \begin{equation}\label{eq:subdiff_outter}
       \left[\partial g^k(x) - \partial h^k(x)\right] \subset \Conv\, \bigcup \left\{\partial f(z) \,\middle|\, z \in P_{\gamma_k f}(x) \right\} \qquad\forall\, x \in \R^n, \; {k \in \N}.
    \end{equation}
For any $x^k \rightarrow \bar x$ and any $z^k \in P_{\gamma_k f}(x^k)$, we have
 \[
    \frac{1}{2\gamma_k} \|z^k - x^k\|^2 + \inf_{x} f(x) \leq \frac{1}{2\gamma_k} \|z^k - x^k\|^2 + f(z^k) \leq \frac{1}{2\gamma_k} \|\bar x - x^k\|^2 + f(\bar x).
 \]
Then, $\|z^k - x^k\| \leq \sqrt{\|\bar x - x^k\|^2 + 2\gamma_k [f(\bar x) - \inf_{x} f(x)]} \rightarrow 0$ due to the assumption that $f$ is bounded from below and therefore $z^k \rightarrow \bar x$. By the {local Lipschitz continuity} of $f$, it follows from \cite[Theorem 9.13]{rockafellar2009variational} that {the mapping $\partial f: x \mapsto \partial f(x)$ is locally bounded at $\bar x$. Thus,} there is a bounded set $S$ such that $\bigcup \{\partial f(z^k) \mid z^k \in P_{\gamma_k f}(x^k)\} \subset S$ for all sufficiently large $k$. It follows directly from \cite[Example 4.22]{rockafellar2009variational} and the definition of the approximate horizon subdifferential that $\partial^\infty_{A} f(\bar x) = \{0\}$.

Next, we will prove  $\partial_A f(\bar x) \subset \partial_C f(\bar x)$. For any $u \in \partial_A f(\bar x)$, from \eqref{eq:subdiff_outter}, there exist sequences of vectors $x^k \rightarrow \bar x$ and $u^k \rightarrow u$ with each $u^k$ taken from the convex hull of a bounded set $\bigcup \{\partial f(z^k) \mid z^k \in P_{\gamma_k f}(x^k)\}$. By Carath\'eodory's Theorem (see, e.g. \cite[Theorem 17.1]{rockafellar1970convex}), 
for each $k$, we have $u^k = \sum_{i=1}^{n+1} \lambda_{k,i} \;v^{k,i}$ for some nonnegative scalars $\{\lambda_{k,i}\}^{n+1}_{i=1}$ with $\sum_{i=1}^{n+1} \lambda_{k,i} = 1$ and a sequence $\big\{v^{k,i} \in \partial f(z^{k,i})\big\}^{n+1}_{i=1}$ with $\{z^{k,i} \in P_{\gamma_k f}(x^k)\}^{n+1}_{i=1}$.   
It is easy to see that the sequences $\{\lambda_{k,i}\}_{k \in \N}$ and $\{v^{k,i}\}_{k \in \N}$ are bounded for each $i$. We can then obtain convergent subsequences $\lambda_{k,i} \to_N \bar\lambda_i \geq 0$ with $\sum_{i=1}^{n+1} \bar\lambda_i = 1$ and $v^{k,i} \to_N {\bar v}^{\, i}$ for each $i$. Since $z^{k,i} \to \bar x$, we have ${\bar v}^{\, i} \in \partial f(\bar x)$ by using the outer semicontinuity of $\partial f$. Thus, $u^k \rightarrow_N u = \sum_{i=1}^{n+1} \bar\lambda_i \, {\bar v}^{\,i} \in \Conv \partial f(\bar x) = \partial_{C} f(\bar x)$.
This implies that $\partial_A f(\bar x) \subset \partial_{C} f(\bar x)$. The {rest of the statements} in (b) {follows} from the fact that $\partial_{C} f(\bar x)$ is nonempty and bounded whenever $f$ is locally Lipschitz continuous \cite[Theorem 9.61]{rockafellar2009variational}. 
\end{proof}

\gap
Under suitable assumptions, Theorem \ref{thm:eADC_subdiff}(b) guarantees the existence of an ADC decomposition that has its approximate subdifferential contained in the Clarke subdifferential of the original function. Notably, this decomposition may not always be practically useful due to the necessity of computing {the Moreau envelope for a generally nonconvex function.}
{Another noteworthy remark is that the assumptions and results of Theorem \ref{thm:eADC_subdiff} can be localized to any specific point $\bar{x}$. This  can be accomplished by defining a notion of ``local epi-convergence'' at $\bar x$ and extending the result of \cite[corollary 8.47]{rockafellar2009variational} accordingly.}

\subsection{Examples of ADC functions.}
\label{subsec:Examples ADC}

{In this subsection, we provide examples of ADC functions}, including functions that are discontinuous relative to their domains, with explicit and computationally tractable approximating sequences. Moreover, we undertake an investigation into the approximate subdifferentials of these ADC functions.

\gap\gap

\noindent
\textbf{Example 2.1} (implicitly convex-concave functions).
The concept of \textsl{implicitly convex-concave (icc) functions} is introduced in the monograph \cite{cui2021modern}, and is further generalized to extended-real-valued functions in \cite{li2022decomposition}. A proper function $f: \R^n \rightarrow \ER$ is icc {if there exists} a lifted function $\overline{f}: \R^n \times \R^n \rightarrow \ER$ such that the following three conditions hold:

(i) $\overline{f}(z,x) = +\infty$ if $z \notin \dom f, x \in \R^n$, and $\overline{f}(z,x) = -\infty$ if $z \in \dom f, x \notin \dom f$;

(ii) $\overline{f}(\cdot, x)$ is convex for any fixed $x \in \dom f$, and $\overline{f}(z, \cdot)$ is concave for any fixed $z \in \dom f$;

(iii) $f(x) = \overline{f}(x, x)$ for any $x \in \dom f$.

\gap
{\noindent A notable example of icc functions is the optimal value function $f_p$ in \eqref{eq:value_func}}, which is associated with the lifted function defined by (the subscripts/superscripts $p$ are omitted for brevity):
\begin{equation}\label{eq:icc lifted}
\begin{array}{lr}
    \overline f(z,x) \triangleq \displaystyle{\inf_{y\in {\R^d} }} \left\{(c+Cx)^\top y + \frac{1}{2} y^\top Q \, y
    \;\middle|\; Az + By\leq b\right\}
    \qquad (x, z) \in \dom f \times \dom f.
\end{array}
\end{equation}
Let $\partial_1 \overline{f}(\cdot,x)$ and $\partial_2(-\overline{f})(z,\cdot)$ denote the subdifferentials of the convex functions $\overline{f}(\cdot,x)$ and $(-\overline{f})(z,\cdot)$, respectively, for any $(x, z) \in \dom f \times \dom f$. 
For any $\gamma > 0$, the \textsl{partial Moreau envelope} of an icc function $f$ associated with $\overline{f}$ is given by
\begin{equation}
\label{eq:PME}
   \inf_{z \in {\R^n}}\left\{ \overline{f}(z,x) + \frac{1}{2 \gamma}\|z - x\|^2 \right\}
    = \underbrace{\frac{\|x\|^2}{2 \gamma}}_{\triangleq g_\gamma(x)} - \underbrace{\sup_{z \in {\R^n}}\left\{- \overline{f}(z,x) - \frac{\|z\|^2}{2 \gamma} + \frac{z^\top x}{\gamma} \right\}}_{\triangleq h_\gamma(x)}
    \qquad x \in \dom f.
    \vspace{-0.1in}
\end{equation}
This decomposition, established in \cite{li2022decomposition}, offers computational advantages compared to the standard Moreau envelope, as the maximization problem defining $h_\gamma$ is concave in $z$ for any fixed $x$. {In what follows, we present new results on the conditions under which the icc function $f$ is e-ADC and c-ADC based on the partial Moreau envelope. Additionally, we explore a relationship between $\partial_A f(\bar x)$ and $\partial_1 \overline{f}(\bar x, \bar x) - \partial_2 (-\overline{f}) (\bar x, \bar x)$, where the latter is known to be an outer estimate of $\partial_C f(\bar x)$ \cite[Proposition 4.4.26]{cui2021modern}. The proof is deferred to Appendix A.}

\begin{proposition}\label{proposition: icc ADC}
    Let $f: {\R^n} \to \ER$ be a proper, lsc, icc function associated with $\overline{f}$, where $\dom f$ is closed and $\overline{f}$ is lsc on ${\R^n} \times \dom f$, bounded below on $\dom f \times \dom f$, and {continuous relative to $\interior(\dom f) \times \interior(\dom f)$}. Given a sequence of scalars $\gamma_k \downarrow 0$, we have:\\
    (a) $f$ is e-ADC associated with $\{f^k\}$, where each $f^k(x) \triangleq {g_{\gamma_k}(x) - h_{\gamma_k}(x)} + \delta_{\dom f}(x)$. In addition, if $\dom f = {\R^n}$, then $f$ is c-ADC associated with $\{f^k\}$.\\
    (b) $\partial_{A} f(\bar x) \subset \partial_1 \overline{f}(\bar x,\bar x) - \partial_2 (-\overline{f})(\bar x,\bar x)$ and $\partial^\infty_A f(\bar x) = \{0\}$ for any $\bar x \in \interior(\dom f)$.
\end{proposition}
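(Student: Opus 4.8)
The plan is to run the monotone-envelope machinery on the partial Moreau envelope $\tilde e_{\gamma}(x)\triangleq g_\gamma(x)-h_\gamma(x)=\inf_{z}\{\overline f(z,x)+\tfrac{1}{2\gamma}\|z-x\|^2\}$. First I would establish $f^k\Pointconv f$. For $x\notin\dom f$ both sides equal $+\infty$, while for $x\in\dom f$ the choice $z=x$ gives $\tilde e_{\gamma_k}(x)\le\overline f(x,x)=f(x)$; for the reverse inequality I would take (near-)minimizers $z^k$ and use boundedness of $\overline f$ from below on $\dom f\times\dom f$ (say by $M$) to deduce $\tfrac{1}{2\gamma_k}\|z^k-x\|^2\le f(x)-M$, so $z^k\to x$, and then invoke lower semicontinuity of $\overline f(\cdot,x)$ to force $\liminf_k\tilde e_{\gamma_k}(x)\ge\overline f(x,x)=f(x)$. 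Since $\gamma_k\downarrow0$ makes $\{\tilde e_{\gamma_k}\}$ nondecreasing and its pointwise limit $f$ is lsc, \cite[Proposition 7.4(c-d)]{rockafellar2009variational} would upgrade this to epi-convergence; the constant term $\delta_{\dom f}$, lsc because $\dom f$ is closed, does not disturb the argument, giving $f^k\Epiconv f$. When $\dom f=\R^n$, continuity of $\overline f$ on $\interior(\dom f)\times\interior(\dom f)=\R^n\times\R^n$ makes both $f$ and each $f^k$ continuous, so the nondecreasing pointwise convergence is in fact continuous, which I would confirm via \cite[Theorem 7.11]{rockafellar2009variational} (or a direct Dini-type estimate).

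\textbf{Part (b), reduction.} Fix $\bar x\in\interior(\dom f)$ and take $u\in\partial_A f(\bar x)$, so that $u=\lim_k u^k$ along some $x^k\to\bar x$ with $u^k\in\partial g^k(x^k)-\partial h^k(x^k)$ for the decomposition $g^k\triangleq g_{\gamma_k}+\delta_{\dom f}$, $h^k\triangleq h_{\gamma_k}$. For large $k$ one has $x^k\in\interior(\dom f)$, hence $\partial g^k(x^k)=\{x^k/\gamma_k\}$. The crux will be an outer estimate of $\partial h_{\gamma_k}(x^k)$: writing $h_\gamma$ as the supremum over $z$ of the $x$-convex functions $-\overline f(z,x)+z^\top x/\gamma-\|z\|^2/(2\gamma)$, I plan to use the sup-function subdifferential rule to express every element of $\partial h_\gamma(x)$ as a convex combination of vectors $q+z/\gamma$ with $z$ in the attained, compact maximizer set $\tilde P_\gamma(x)\triangleq\argmin_z\{\overline f(z,x)+\tfrac{1}{2\gamma}\|z-x\|^2\}$ and $q\in\partial_2(-\overline f)(z,x)$. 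The key observation is that the proximal stationarity $\tfrac{x-z}{\gamma}\in\partial_1\overline f(z,x)$ for $z\in\tilde P_\gamma(x)$ cancels the $1/\gamma$ terms: $x/\gamma-(q+z/\gamma)=\tfrac{x-z}{\gamma}-q=p-q$ with $p\in\partial_1\overline f(z,x)$ and $q\in\partial_2(-\overline f)(z,x)$. Thus each $u^k$ becomes a convex combination, of at most $n+1$ terms by Carath\'eodory, of differences $p^{k,i}-q^{k,i}$ with $p^{k,i}\in\partial_1\overline f(z^{k,i},x^k)$, $q^{k,i}\in\partial_2(-\overline f)(z^{k,i},x^k)$, and $z^{k,i}\in\tilde P_{\gamma_k}(x^k)$.

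\textbf{Part (b), passage to the limit.} The bounded-below estimate from Part (a), now combined with $f(x^k)\to f(\bar x)$ by interior continuity, gives $z^{k,i}\to\bar x$. Since $\overline f$ is continuous on $\interior(\dom f)\times\interior(\dom f)$, on a compact neighborhood of $(\bar x,\bar x)$ it is bounded, so the convex slices $\overline f(\cdot,x^k)$ and $-\overline f(z^{k,i},\cdot)$ are uniformly Lipschitz near $\bar x$; hence $\{p^{k,i}\}$ and $\{q^{k,i}\}$ are bounded and I would extract $\lambda_{k,i}\to\bar\lambda_i$, $p^{k,i}\to\bar p^{\,i}$, $q^{k,i}\to\bar q^{\,i}$. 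Passing to the limit in the convex subgradient inequalities, using upper semicontinuity of the concave slice $\overline f(z,\cdot)$ and lower semicontinuity of the convex slice $\overline f(\cdot,x)$ at the interior point $\bar x$, should yield $\bar p^{\,i}\in\partial_1\overline f(\bar x,\bar x)$ and $\bar q^{\,i}\in\partial_2(-\overline f)(\bar x,\bar x)$. Convexity of these two sets then places $u=\sum_i\bar\lambda_i(\bar p^{\,i}-\bar q^{\,i})$ into $\partial_1\overline f(\bar x,\bar x)-\partial_2(-\overline f)(\bar x,\bar x)$. The same uniform bound shows that the sets $\partial g^k(x^k)-\partial h^k(x^k)$ are equi-bounded for $x^k$ near $\bar x$, so their horizon outer limit is trivial and $\partial^\infty_A f(\bar x)=\{0\}$ by \cite[Example 4.22]{rockafellar2009variational}.

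\textbf{Main obstacle.} I expect the hard part to be the outer estimate of $\partial h_\gamma$: justifying the sup-function rule with attained maximizers and no spurious recession directions when $\overline f$ is only a convex-concave saddle function that is merely continuous on the interior of its domain. This is precisely where the hypotheses $\bar x\in\interior(\dom f)$, closedness of $\dom f$, lower semicontinuity of $\overline f$ on $\R^n\times\dom f$, and interior continuity are used, both to validate the subdifferential calculus and to secure the uniform Lipschitz bounds that drive the limit passage and force $\partial^\infty_A f(\bar x)=\{0\}$.
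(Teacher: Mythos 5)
Your overall route coincides with the paper's: part (a) is monotone pointwise convergence of the partial Moreau envelopes upgraded to epi-convergence via \cite[Proposition 7.4]{rockafellar2009variational}, and part (b) is the Danskin-type computation of $\partial h_{\gamma_k}$ combined with the proximal optimality condition $(x^k-z^k)/\gamma_k\in\partial_1\overline f(z^k,x^k)$ to cancel the $1/\gamma_k$ terms, followed by $z^k\to\bar x$ and outer semicontinuity plus local boundedness of $\partial_1\overline f$ and $\partial_2(-\overline f)$ at the interior point. (Two cosmetic differences in (b): since $\overline f(\cdot,x)+\|\cdot-x\|^2/(2\gamma)$ is strongly convex, the proximal point is unique, so the sup-rule/Carath\'eodory layer you add is unnecessary; and the paper outsources the osc and local boundedness of the partial subdifferentials to a cited lemma rather than re-deriving them from the subgradient inequalities as you do.)

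There is, however, one genuine gap in part (a). For a nondecreasing sequence, \cite[Proposition 7.4(d)]{rockafellar2009variational} gives $f^k\Epiconv\sup_k[\operatorname{cl}f^k]$, not $\sup_k f^k$; lower semicontinuity of the pointwise limit $f$, which is all you invoke, does not ensure that $\sup_k\operatorname{cl}f^k=f$. You must additionally show that each $f^k$ is itself lsc. This is not automatic for an infimal projection, and it is exactly where the hypothesis that $\overline f$ is \emph{jointly} lsc on $\R^n\times\dom f$ (together with the level-boundedness in $z$ locally uniformly in $x$ coming from the bound below on $\dom f\times\dom f$) enters: the paper packages this via the parametric minimization theorem \cite[Theorem 1.17]{rockafellar2009variational}, whereas your argument only uses lower semicontinuity of the slice $\overline f(\cdot,x)$ and never touches joint lsc. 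The gap is fillable by the same standard argument, but as written your justification of $f^k\Epiconv f$ is incomplete.
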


\gap

\gap\gap
\noindent
\textbf{Example 2.2} (VaR for continuous random variables).
Given a {continuous} random variable $Y: \Omega \to \R$, { its conditional value-at-risk (CVaR)  at a confidence level $\alpha \in (0,1)$ is defined as $\mbox{CVaR}_{\alpha}(Y) \triangleq \mathbb{E}[\,Y \mid Y \geq \mbox{VaR}_{\alpha}(Y)]$, where 
$\mbox{VaR}_{\alpha}$ is the value-at-risk given  in Example 1.2 (see, e.g., \cite{rockafellar2000optimization}).}
For any $\alpha \in (0,1)$ and  $k > 1/{\alpha}$, we define
\begin{equation}\label{eq:VaR_ADC}
    g^k(x) \triangleq [k(1-\alpha)+1]\, {\mbox{CVaR}_{\alpha-1/k}}[\, c(x,Z)],
    \quad
    h^k(x) \triangleq k(1-\alpha)\, {\mbox{CVaR}_{\alpha}}[\, c(x,Z)]
    \quad x \in \R^n.
\end{equation}
The following properties of VaR for continuous random variables hold,
with proofs provided in Appendix A.

\begin{proposition}\label{proposition: ADC of Var}
    Let $c:\R^n \times \R^m \to \R$ be a lsc function and $Z:\Omega \to \R^m$ be a random vector. Suppose that $c(\cdot, z)$ is convex for any fixed $z \in \R^m$, and $c(x,Z)$ is {a random variable having a continuous distribution induced by that of $Z$ for any fixed $x \in \R^n$.} Additionally, assume that $\mathbb{E}[\,|c(x,Z)|\,] < +\infty$ for any $x \in \R^n$. For any given constant $\alpha \in (0,1)$, the following properties hold.\\
    (a) $\mbox{VaR}_\alpha [\, {c(\cdot,Z)} ]$ is lsc and e-ADC associated with $\{g^k-h^k\}$ {(with the definitions of $g^k$ and $h^k$ in \eqref{eq:VaR_ADC})}. 
    Additionally, if $c(\cdot,\cdot)$ is continuous, then $\mbox{VaR}_\alpha [\, {c(\cdot, Z)} ]$ is continuous and c-ADC associated with $\{g^k-h^k\}$.\\
    (b) If there exists a measurable function $\kappa: \R^m \to \R_+$ such that $\mathbb{E}[\,\kappa(Z)] < +\infty$ and $|c(x,z) - c(x^\prime,z)| \leq \kappa(z) \|x-x^\prime\|$  for all $x,x^\prime \in \R^n$ and $z \in \R^m$, then for any $\bar x \in \R^n$,
    \[
        \partial_{A} \mbox{VaR}_\alpha [\, c(\cdot,Z)](\bar x) = \bigcup_{x^k \rightarrow \bar x} \Limsup_{k \rightarrow +\infty} \mathbb{E}\left[\,\partial_1\, c(x^k,Z) \,\middle|\, \mbox{VaR}_{\alpha-1/k}[\,c(x^k,Z)] < c(x^k,Z) < \mbox{VaR}_{\alpha}[\,c(x^k,Z)] \,\right],
        \vspace{-0.08in}
    \]
    where $\mathbb{E}[\,\mathcal{A}(Z) \mid B]$ for a random set-valued mapping $\mathcal{A}$ and an event $B$ is defined as the set of conditional expectations $\mathbb{E}[\,a(Z) \mid B]$ for all measurable selections $a(Z) \in \mathcal{A}(Z)$.
\end{proposition}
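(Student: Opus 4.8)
The plan is to reduce both parts to a single scalar identity: the normalizing constants in \eqref{eq:VaR_ADC} are chosen precisely so that the DC pair encodes an \emph{averaged quantile}. Writing $Y_x \triangleq c(x,Z)$ and using the Acerbi integral representation of CVaR valid for continuous distributions, $\mbox{CVaR}_\beta(Y_x) = \frac{1}{1-\beta}\int_\beta^1 \mbox{VaR}_t(Y_x)\,dt$, a direct computation gives
\[
   f^k(x)\triangleq g^k(x)-h^k(x) = k\int_{\alpha-1/k}^{\alpha}\mbox{VaR}_t(Y_x)\,dt,
\]
the average of the quantile function over the left neighborhood $[\alpha-1/k,\alpha]$ (this uses $k>1/\alpha$ so that $\alpha-1/k>0$). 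I would establish this identity first and read part (a) off it. Since $t\mapsto\mbox{VaR}_t(Y_x)$ is nondecreasing and the left-continuous quantile satisfies $\lim_{t\uparrow\alpha}\mbox{VaR}_t(Y_x)=\mbox{VaR}_\alpha(Y_x)$, the averages are nondecreasing in $k$ and converge pointwise to $\mbox{VaR}_\alpha(Y_x)$. Convexity of $g^k,h^k$ follows from the Rockafellar--Uryasev formula $\mbox{CVaR}_\beta(c(x,Z))=\min_\tau\{\tau+\frac{1}{1-\beta}\mathbb{E}[(c(x,Z)-\tau)_+]\}$, whose inner function is jointly convex in $(x,\tau)$ because $c(\cdot,z)$ is convex; partial minimization in $\tau$ preserves convexity, and the integrability hypothesis makes each $f^k$ finite and continuous, hence lsc and proper. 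A nondecreasing sequence of lsc functions epi-converges to its (automatically lsc) pointwise supremum, so $\mbox{VaR}_\alpha[c(\cdot,Z)]$ is lsc and $f^k\Epiconv \mbox{VaR}_\alpha[c(\cdot,Z)]$ by \cite[Proposition 7.4(c-d)]{rockafellar2009variational}. When $c$ is jointly continuous, convergence in distribution of $c(x^k,Z)$ to $c(\bar x,Z)$ together with continuity of the limiting law upgrades the pointwise quantile convergence to continuity of the limit, and Dini's theorem on compact neighborhoods turns monotone convergence into continuous convergence, giving c-ADC via \cite[Theorem 7.11]{rockafellar2009variational}.

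For part (b) I would differentiate the two convex blocks. At a fixed $x$ the optimal $\tau$ in the Rockafellar--Uryasev formula equals $\mbox{VaR}_\beta(Y_x)$, and since $Y_x$ has no atoms the inner objective is differentiable in $\tau$ with vanishing derivative there. The subdifferentiation rule for expectations of convex normal integrands, whose hypotheses are met because $\partial_1 c(x,z)\subset\kappa(z)\,\mathbb{B}(0,1)$ with $\mathbb{E}[\kappa(Z)]<+\infty$ licenses interchanging $\partial$ and $\mathbb{E}$, then yields $\partial g^k(x)=k\,\mathbb{E}[\partial_1 c(x,Z)\,\mathbf{1}_{Y_x>\mbox{VaR}_{\alpha-1/k}(Y_x)}]$ and $\partial h^k(x)=k\,\mathbb{E}[\partial_1 c(x,Z)\,\mathbf{1}_{Y_x>\mbox{VaR}_{\alpha}(Y_x)}]$, each read as the set of expectations of measurable selections. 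Splitting the larger tail into the band $\{\mbox{VaR}_{\alpha-1/k}(Y_x)<Y_x\le \mbox{VaR}_\alpha(Y_x)\}$, of probability exactly $1/k$, and the common upper tail $\{Y_x>\mbox{VaR}_\alpha(Y_x)\}$, the single-selection contribution of $\partial g^k(x)-\partial h^k(x)$ is exactly the conditional expectation $\mathbb{E}[\partial_1 c(x,Z)\mid \text{band}]$ appearing in the claim.

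The crux, and the step I expect to be the main obstacle, is the set-valued subtraction: the approximate subdifferential is built from the Minkowski difference $\partial g^k(x^k)-\partial h^k(x^k)$, in which the selections of $\partial_1 c$ on the common tail may be chosen \emph{independently} in $g^k$ and in $h^k$. Since that tail discrepancy is multiplied by $k$, a naive bound leaves open the possibility that it diverges or produces spurious limit directions. The plan is to exploit that the outer limit only retains convergent sequences $u^k\to u$: for such sequences the $O(k)$ tail term must cancel, and the no-atom property of each $Y_{x^k}$ together with the a.e.\ differentiability of the convex integrands off the nonatomic law forces the surviving contribution to be realizable as a single band selection, possibly along a modified sequence $x^k\to\bar x$ whose band sweeps out the relevant subgradients. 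The reverse inclusion is the easy direction: choosing equal tail selections exhibits every band conditional expectation directly in $\partial g^k(x^k)-\partial h^k(x^k)$, so the union over sequences on the right-hand side is contained in $\partial_A\mbox{VaR}_\alpha[c(\cdot,Z)](\bar x)$. I would close the argument by using the outer semicontinuity of $\partial_1 c$ and the domination by $\kappa$ to pass the outer limit through the conditional expectations and to guarantee the limiting sets are nonempty and compact.
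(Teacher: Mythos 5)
Your part (a) is essentially the paper's argument: the same averaged-quantile identity $g^k(x)-h^k(x)=k\int_{\alpha-1/k}^{\alpha}\mbox{VaR}_t[c(x,Z)]\,\mbox{d}t$, monotone increase to $\mbox{VaR}_\alpha$ via left-continuity of the quantile function, convexity of $g^k,h^k$ from the Rockafellar--Uryasev representation, and epi-convergence from \cite[Proposition 7.4(c-d)]{rockafellar2009variational}. The only differences are cosmetic: you obtain lower semicontinuity of $\mbox{VaR}_\alpha[c(\cdot,Z)]$ for free as the supremum of the continuous $f^k$, whereas the paper argues through closedness of the level sets $\{x\mid\mathbb{P}(c(x,Z)\le r)\ge\alpha\}$, and you reach continuous convergence via weak convergence of the laws plus Dini rather than the paper's citation on continuity of probability functions. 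Both routes are acceptable.

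Part (b) has a genuine gap, and it sits exactly where you predicted. You correctly derive the risk-identifier representation $\partial\,\mbox{CVaR}_\beta[c(\cdot,Z)](x)=\int\partial_1 c(x,Z(\omega))\,\phi(\omega)\,\mbox{d}\mathbb{P}_Z(\omega)$ and the band/tail splitting, but the claimed set equality requires showing that the Minkowski difference $\partial g^k(x)-\partial h^k(x)$ --- in which the selections of $\partial_1 c(x,\cdot)$ on the common upper tail $\{c(x,Z)>\mbox{VaR}_\alpha[c(x,Z)]\}$ may be chosen independently and are scaled by $k$ --- collapses to $\mathbb{E}[\partial_1 c(x,Z)\mid\text{band}]$. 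Your proposed resolution (``the $O(k)$ tail term must cancel in the outer limit\ldots realizable as a single band selection along a modified sequence'') is not an argument: along a convergent sequence $u^k\to u$ the tail discrepancy $k\int_{\{c>\mbox{\scriptsize VaR}_\alpha\}}(a_k-b_k)$ need only be $O(1)$, not zero, so the limit can acquire an additive perturbation $w\neq 0$, and nothing in your sketch rules this out or shows that $v+w$ is attained by a genuine band selection; as written it yields only the easy inclusion ``$\supseteq$''. The paper avoids any limiting argument at this point: it establishes the identity exactly, for each fixed $k$ and $x$, by observing that the risk identifiers $\phi_3,\phi_4$ are determined up to null sets (the quantile level sets have probability zero, so the integral $\int\partial_1 c\,\phi\,\mbox{d}\mathbb{P}_Z$ does not depend on which identifier is used), and then rewriting the difference of the two scaled Aumann integrals as a single Aumann integral with the combined weight $[k(1-\alpha)+1]\phi_3-k(1-\alpha)\phi_4=k\,\mathbf{1}_{\text{band}}$ a.s., so both $\partial g^k$ and $\partial h^k$ are evaluated with a common selection and the tail contribution vanishes identically rather than asymptotically. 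To complete your proof you need this pointwise identity (or an equivalent argument that the upper-tail integral enters both sets through the same selection), not a cancellation-in-the-limit heuristic.
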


\section{The convex composite ADC functions and minimization.}
\label{sec3:composite model}

This section aims to derive necessary optimality conditions for \eqref{eq:cvx_composite}, particularly focusing on the inner function $f_p$ that lacks local Lipschitz continuity. 
{Throughout the rest of this paper, we assume that $\varphi_p: \R \to \R \cup \{+\infty\}$ is proper, convex, lsc and $f_p: \R^n \to \R$ is real-valued for all $p=1,\cdots,m$.
Depending on whether $\varphi_p$ is nondecreasing or not, we partition $\{1,\cdots,m\}$ into two categories:
\begin{equation}\label{defn: index for varphi_p}
    I_1 \triangleq \left\{\, p \in \{1,\cdots,m\} \,\middle|\, \varphi_p \text{ nondecreasing}\right\}
    \quad\text{and}\quad
    I_2 \triangleq\{1,\cdots,m\} \backslash I_1.
\end{equation}
We do not specifically address the case where $\varphi_p$ is nonincreasing, as one can always redefine $\widetilde\varphi_p(t) = \varphi_p(-t)$ and $\widetilde f_p(x) = -f_p(x)$, enabling the treatment of these indices in the same manner as those in $I_1$. Therefore, the set $I_2$ should be viewed as the collection of indices $p$ where $\varphi_p$ is not monotone. We further make the following assumptions on the functions $\varphi_p$ and $f_p$.}


 \vskip 0.1in

\begin{center}
	\fbox{\parbox{0.98\textwidth}{
 \noindent{\bf Assumption 1} For each $p$, we have
	\begin{itemize}
	\item[(a)] $f_p$ is e-ADC associated with $\{f^k_p = g^k_p-h^k_p\}_{k \in \N}$, and $\dom g^k_p = \dom h^k_p = \R^n$;
	
	\item[(b)] $-\infty < \displaystyle\liminf_{x^\prime \rightarrow x,\, k \rightarrow +\infty} f^k_p(x^\prime) \leq \limsup_{x^\prime \rightarrow x,\, k \rightarrow +\infty} f^k_p(x^\prime) < + \infty$ for all $x \in \R^n$;

    \item[(c)] $\big[F^k_p \triangleq \varphi_p \circ f^k_p\big] \Epiconv F_p$.
	\end{itemize}
	}}
\end{center}

From Assumption 1(a), each $f^k_p$ is locally Lipschitz continuous since any real-valued convex function is locally Lipschitz continuous. Obviously, $f^k_p \Contconv f_p$ is sufficient for Assumption 1(b) to hold. Since $f^k_p \Epiconv f_p$, we have $\liminf_{x^\prime \rightarrow x, k \rightarrow +\infty} f^k_p(x^\prime) \geq f_p(x) > -\infty$ for each $p$ at any $x \in \R^n$. However, $\limsup_{x^\prime \rightarrow x, k \rightarrow +\infty} f^k_p(x^\prime) <+\infty$ does not hold trivially. For example, consider a continuous function $f$ and 
\[
    f^k(x) =\left\{\begin{array}{cl}
			f(x) + k^2 x + k &\text{ if } x \in [- 1/k, 0]\\
			f(x) - k^2 x + k &\text{ if } x \in (0, 1/k]\\
			f(x) &\text{ otherwise}
		  \end{array}\right.,
\]
which results in $f^k \Epiconv f$ but ${\limsup_{k \rightarrow +\infty} f^k(0) = + \infty}$. Additionally, Assumption 1(b) ensures that at each point $x$ and for any sequence $x^k \to x$, the sequence $\{f^k_p(x^{k})\}_{k \in \N}$ must be bounded. 

\gap

It follows from \cite[Exercise 7.8(c)]{rockafellar2009variational} and \cite[Theorem 2.4]{royset2022consistent} that 
there are several sufficient conditions for Assumption 1(c) to hold, which differ based on the monotonicity of each $\varphi_p$: (i) For $p \in I_1$, either $\varphi_p$ is real-valued or $f^k_p \leq f_p$; (ii) For $p \in I_2$, $f_p$ is c-ADC and for all $x$ with $f_p(x) \in \operatorname{bdry}(\dom {\varphi_p})$, there exists a sequence ${x^k} \to x$ with $f(x^k) \in \interior(\dom \varphi_p)$. In addition, 
according to \cite[Proposition 7.4(a)]{rockafellar2009variational}, Assumption 1(c) implies that $F_p = \varphi_p \circ f_p$ is lsc. We also note that Assumption 1(c) doesn't necessarily imply $\sum_{p=1}^{m} F^k_p \Epiconv \sum_{p=1}^{m} F_p$. 
To maintain epi-convergence under addition of functions, one may refer to the sufficient conditions in \cite[Theorem 7.46]{rockafellar2009variational}.

\subsection{Asymptotic stationarity under epi-convergence.}
\label{subsec:asymptotic stationarity}

In this subsection, we introduce a novel stationarity concept for problem \eqref{eq:cvx_composite}, grounded in a monotonic decomposition of univariate convex functions. We demonstrate  that under certain constraint qualifications, epi-convergence of approximating functions ensures this stationarity concept as a necessary optimality condition. Alongside the known fact that epi-convergence {also ensures the consistency  of global optimal solutions} \cite[Theorem 7.31(b)]{rockafellar2009variational}, this highlights the usefulness of epi-convergence as a  tool for studying the approximation of problem \eqref{eq:cvx_composite}.

\gap
{The following lemma is an extension of \cite[Lemma 6.1.1]{cui2021modern} from real-valued univariate convex functions to extended-real-valued univariate convex functions.}

\begin{lemma}[{a monotonic decomposition of univariate convex functions}]
\label{lem:cvx_monotone_decomposition}
    Let $\varphi: \R \rightarrow \ER$ be a proper, lsc and convex function. Then there exist a proper, lsc, convex and nondecreasing function $\varphi^{\uparrow}$,  as well as a proper, lsc, convex and nonincreasing function $\varphi^{\downarrow}$, such that $\varphi = \varphi^\uparrow + \varphi^\downarrow$. In addition, if $\interior(\dom\varphi) \neq \emptyset$, then $\partial \varphi(z) = \partial\varphi^\uparrow(z) + \partial\varphi^\downarrow(z)$ for any $z \in \dom\varphi$.
\end{lemma}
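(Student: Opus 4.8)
The plan is to construct $\varphi^\uparrow$ and $\varphi^\downarrow$ explicitly from the monotone structure of the subdifferential of $\varphi$. Since $\varphi$ is proper, lsc and convex, its subdifferential $\partial\varphi$ is a maximal monotone operator on $\R$, and on the interior of $\dom\varphi$ the function is differentiable almost everywhere with a nondecreasing derivative. The natural idea is to split the ``slope'' of $\varphi$ into its nonnegative and nonpositive parts. Concretely, fix a reference point $z_0 \in \interior(\dom\varphi)$ and define
\[
    \varphi^\uparrow(z) \triangleq \varphi(z_0) + \int_{z_0}^{z} \max\{s(t), 0\}\, dt
    \quad\text{and}\quad
    \varphi^\downarrow(z) \triangleq \int_{z_0}^{z} \min\{s(t), 0\}\, dt,
\]
where $s(t)$ denotes a measurable selection of $\partial\varphi(t)$ (for instance the right derivative $\varphi'_+(t)$, which is nondecreasing and defined throughout $\interior(\dom\varphi)$). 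Because $t \mapsto \max\{s(t),0\}$ is nondecreasing and nonnegative, $\varphi^\uparrow$ is convex and nondecreasing; similarly $t \mapsto \min\{s(t),0\}$ is nondecreasing and nonpositive, so $\varphi^\downarrow$ is convex and nonincreasing. The identity $\max\{s,0\}+\min\{s,0\}=s$ gives $(\varphi^\uparrow+\varphi^\downarrow)' = \varphi'_+$ on the interior, so $\varphi^\uparrow+\varphi^\downarrow$ and $\varphi$ agree up to an additive constant on $\interior(\dom\varphi)$, which the choice of constant at $z_0$ eliminates.

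The next step is to extend the decomposition correctly to the boundary and outside of $\dom\varphi$, and to confirm properness and lower semicontinuity. On $\interior(\dom\varphi)$ the equality $\varphi=\varphi^\uparrow+\varphi^\downarrow$ holds by the fundamental theorem of calculus for convex functions. To handle the endpoints of $\dom\varphi$ I would take lsc closures: replace $\varphi^\uparrow$ and $\varphi^\downarrow$ by their lsc hulls, which preserves convexity and monotonicity and does not alter the values on the interior. One must check that the closures still sum to $\varphi$ at the boundary points of $\dom\varphi$ and that each piece is proper; here the monotonicity is convenient, since a nondecreasing convex function extended by $+\infty$ past the right end of its domain (and by its limiting value, possibly $-\infty$ avoided by lsc regularization) remains proper and lsc under the integral construction. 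The case analysis according to whether $\dom\varphi$ is bounded, half-bounded, or all of $\R$ handles the finitely many endpoint configurations.

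For the subdifferential sum rule, the assumption $\interior(\dom\varphi)\neq\emptyset$ is exactly the Moreau--Rockafellar qualification for the sum of convex functions. Since $\varphi^\uparrow$ and $\varphi^\downarrow$ are proper convex functions with overlapping domains and $\interior(\dom\varphi^\uparrow)\cap\interior(\dom\varphi^\downarrow)\supset\interior(\dom\varphi)\neq\emptyset$, the classical additivity $\partial(\varphi^\uparrow+\varphi^\downarrow)(z)=\partial\varphi^\uparrow(z)+\partial\varphi^\downarrow(z)$ holds for every $z$ where the left side is defined, in particular for every $z\in\dom\varphi$; this is precisely \cite[Theorem 23.8]{rockafellar1970convex} or the corresponding result in \cite{rockafellar2009variational}. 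Thus the sum rule reduces to invoking a standard qualification-based calculus rule once the decomposition itself is established.

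The main obstacle I anticipate is not the interior argument, which is essentially the elementary fact that a convex function is the integral of its nondecreasing subderivative, but rather the careful bookkeeping at the boundary of $\dom\varphi$ to guarantee that both $\varphi^\uparrow$ and $\varphi^\downarrow$ remain \emph{proper} (never identically $+\infty$ and never taking the value $-\infty$) and lsc while still summing exactly to $\varphi$, including at endpoints where $\varphi$ itself may jump to $+\infty$. In particular, when $\dom\varphi$ has a finite endpoint at which $\varphi$ is finite, one must assign the ``$+\infty$ past the endpoint'' behavior to whichever of the two monotone pieces is responsible for confining the domain, and verify no spurious $-\infty$ values are introduced by the nonincreasing piece $\varphi^\downarrow$; the lsc regularization together with the monotonicity handles this, but it is the step requiring the most care. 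Since the lemma is stated as an extension of \cite[Lemma 6.1.1]{cui2021modern} from the real-valued to the extended-real-valued setting, I expect the proof to follow that reference on $\interior(\dom\varphi)$ and to add the boundary analysis described above.
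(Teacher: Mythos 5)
Your construction is correct, but it takes a genuinely different route from the paper's. The paper argues by a dichotomy on recession directions: either $\varphi$ attains its minimum at some $z^\ast$ (by \cite[Theorem 27.2]{rockafellar1970convex}), in which case it splits $\varphi$ at $z^\ast$ via explicit piecewise formulas ($\varphi^\uparrow$ frozen at the value $\varphi(z^\ast)$ to the left of $z^\ast$, $\varphi^\downarrow$ vanishing to the right), or $\varphi$ is already monotone and one of the two pieces is taken to be identically zero. Your integral construction $\varphi^\uparrow(z)=\varphi(z_0)+\int_{z_0}^{z}\max\{\varphi'_+(t),0\}\,dt$, $\varphi^\downarrow(z)=\int_{z_0}^{z}\min\{\varphi'_+(t),0\}\,dt$ produces the same pair of functions up to additive constants (since $\varphi'_+$ changes sign exactly at the minimizer when one exists), and the boundary behavior you worry about does resolve itself: when a finite endpoint $b$ lies outside $\dom\varphi$, lower semicontinuity forces $\varphi(z)\to+\infty$ as $z\to b$, hence $\int_{z_0}^{b}\max\{\varphi'_+,0\}\,dt=+\infty$ and the nondecreasing piece correctly carries the $+\infty$ value, while the integral representation $\varphi(z)=\varphi(z_0)+\int_{z_0}^{z}\varphi'_+(t)\,dt$ of \cite[Corollary 24.2.1]{rockafellar1970convex} handles finite endpoints belonging to $\dom\varphi$. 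What your approach buys is a reference-point-free, case-free formula; what the paper's buys is that properness, lower semicontinuity, convexity, and monotonicity of each piece are immediate from the defining formulas, with no appeal to the integral representation of convex functions. Both proofs conclude the subdifferential identity identically, via $\emptyset\neq\interior(\dom\varphi)\subset\interior(\dom\varphi^\uparrow)\cap\interior(\dom\varphi^\downarrow)$ and \cite[Theorem 23.8]{rockafellar1970convex}. One small point to patch: the first assertion of the lemma does not assume $\interior(\dom\varphi)\neq\emptyset$, and when $\dom\varphi$ is a singleton your reference point $z_0$ cannot be chosen; that degenerate case needs a separate one-line treatment (e.g., $\varphi^\uparrow=\varphi(a)+\delta_{(-\infty,a]}$ and $\varphi^\downarrow=\delta_{[a,+\infty)}$ for $\dom\varphi=\{a\}$), which the paper's case analysis covers automatically.
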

\begin{proof}
	From the convexity of $\varphi$, $\dom \varphi$ is an interval on $\R$, possibly unbounded. 
	In fact, we can explicitly construct $\varphi^\uparrow$ and $\varphi^\downarrow$ in following two cases.

	\noindent\underline{\textsl{Case 1.}} If $\varphi$ has no direction of recession, i.e., there does not exist $d \neq 0$ such that for any $z$, $\varphi(z+\lambda d)$ is a nonincreasing function of $\lambda>0$, it follows from \cite[Theorem 27.2]{rockafellar1970convex} that $\varphi$ attains its minimum at some $z^\ast \in \dom \varphi$. Define
	\[
		\varphi^\uparrow(z) = \left\{
		\begin{array}{cl}
			\varphi(z^\ast) &\text{if } z \leq z^\ast\\
			\varphi(z) &\text{if } z > z^\ast
		\end{array}\right.
	\quad \mbox{and}\quad 
		\varphi^\downarrow(z) = \left\{
		\begin{array}{cl}
			\varphi(z) - \varphi(z^\ast) &\text{if } z \leq z^\ast\\
			0 &\text{if } z > z^\ast
		\end{array}\right..
	\]
    
	Observe that $\emptyset \neq \interior(\dom\varphi) \subset \big[\interior(\dom\varphi^\uparrow) \cap \interior(\dom\varphi^\downarrow)\big]$. Consequently, from \cite[Theorem 23.8]{rockafellar1970convex}, we have $\partial\varphi(z) = \partial\varphi^\uparrow(z) + \partial\varphi^\downarrow(z)$ for any $z \in \R$. 

	\noindent\underline{\textsl{Case 2.}} Otherwise, there exists $d \neq 0$ such that for any $z \in \R$, $\varphi(z + \lambda d)$ is a nonincreasing function of $\lambda > 0$. Consequently, $\dom\varphi$ {must} be an unbounded interval on $\R$. Let $d = 1$ (or $-1$) be such a recession direction, then $\varphi$ is nonincreasing (or nondecreasing) on $\R$. We can set $\varphi^\uparrow = 0$ and $\varphi^\downarrow = \varphi$ (or $\varphi^\uparrow = \varphi$ and $\varphi^\downarrow = 0$). 
    In this case, it is obvious that $\partial\varphi(z) = \partial\varphi^\uparrow(z) + \partial\varphi^\downarrow(z)$ for any $z \in \R$. The proof is thus completed. 
\end{proof}

In the subsequent analysis, we use $\varphi^\uparrow$ and $\varphi^\downarrow$ to denote the monotonic decomposition of any univariate, proper, lsc, and convex function $\varphi$ constructed in the proof of Lemma \ref{lem:cvx_monotone_decomposition} and, in particular, we take $\varphi^\downarrow = 0$ whenever $\varphi$ is nondecreasing. We are now ready to present the definition of asymptotically stationary points.

\gap
\begin{definition}[asymptotically stationary points]
	Let each $f_p$ be an ADC function associated with $\{f^k_p=g^k_p-h^k_p\}_{k \in \N}$. For each $p$, define
    \begin{equation}\label{def:T_p}
        T_p(x) \triangleq \left\{t_p \,\middle|\, \exists\, N \in \mathbb{N}_\infty^\sharp, x^k \rightarrow x\text{ with } f^k_p(x^k) \rightarrow_N t_p\right\} \qquad x \in \R^n.
    \end{equation}
    We say that $\bar x$ is an \textsl{asymptotically stationary (A-stationary) point} of problem \eqref{eq:cvx_composite} if for each $p$, there exists $y_p \in \bigcup \{\partial \varphi_p(t_p) \mid t_p \in T_p(\bar x)\}$ such that
    \begin{equation}\label{eq:A-stationary}
	0 \, \in \,
	\sum\limits_{p=1}^{m} \left(\, \big\{ y_p \, \partial_{A} f_p(\bar x)\big\} \cup {\big[ \pm\partial^\infty_A f_p(\bar x)\backslash \{0\} \,\big]} \,\right).
    \end{equation}
    We say that $\bar x$ is a \textsl{weakly asymptotically stationary (weakly A-stationary) point} of problem \eqref{eq:cvx_composite} if for each $p$, there exist {$\bar t_p \in T_p(\bar x)$, $y_{p,1} \in \partial \varphi^\uparrow_p(\bar t_p)$ and $y_{p,2} \in \partial \varphi^\downarrow_p(\bar t_p)$} such that
    \[
	0 \in
	\sum\limits_{p=1}^{m} \left(\, \left\{ y_{p,1} \, \partial_{A} f_p(\bar x) + y_{p,2} \, \partial_{A} f_p(\bar x) \right\}
        \cup
        \big[ \pm\partial^\infty_A f_p(\bar x)\backslash \{0\} \big] \,\right).
    \]
\end{definition}

\begin{remark}\label{remark:A-stationary}
    (\romannumeral1) Given that the approximate subdifferential $\partial_A f_p$ is determined by the approximating sequence $\{f^k_p\}_{k \in \N}$ and their corresponding DC decompositions, the notion of (weak) A-stationarity also depends on these sequences and decompositions. 
    (\romannumeral2) It follows directly from Lemma \ref{lem:cvx_monotone_decomposition} that an A-stationary point must be a weakly A-stationary point if $\interior(\dom \varphi_p) \neq \emptyset$ for each $p=1,\cdots,m$. 
    (\romannumeral3) When each $\varphi_p$ is nondecreasing or nonincreasing, the concepts of weak A-stationarity and A-stationarity coincide. 
    (\romannumeral4) Given a point $\bar x$,  we can rewrite \eqref{eq:A-stationary} as
    \[
        0 \in \sum_{p \in I}\big[ \pm\partial^\infty_A f_p(\bar x)\backslash \{0\} \big] + \sum_{p \in \{1, \cdots, m\}\backslash I} \{y_p \, \partial_{A} f_p(\bar x)\}
    \]
    for some index set $I \subset \{1, \cdots, m\}$ that is potentially empty. For each $p \in I$, although the scalar $y_p$ does not explicitly appear in this inclusion, its existence implies that $\bigcup \{\partial\varphi_p(t_p) \mid t_p \in T_p(\bar x)\} \neq \emptyset$, which plays a role in ensuring $\bar{x} \in \dom (\varphi_p \circ f_p)$. For instance, if $f^k_p \Contconv f_p$ for some $p \in I$, then $T_p(\bar x) = \{f_p(\bar x)\}$, and the existence of $y_p \in \bigcup \{\partial\varphi_p(t_p) \mid t_p \in T_p(\bar x)\} = \partial\varphi_p(f_p(\bar x))$ yields $\bar x \in \dom(\varphi_p \circ f_p)$.
\end{remark}

\gap
In the following, we take a detour to compare the  A-stationarity with the stationarity defined in \cite{royset2022consistent}, where the author has focused on a more general composite problem {
\[
    \operatornamewithlimits{minimize}_{x\in \R^n} \; \varphi \left(f(x)\right),
\]
where $\varphi:\R^m \to \ER$ is proper, lsc, convex and $f \triangleq (f_1,\cdots, f_m): \R^n \to \R^m$ is a locally Lipschitz continuous mapping.}  
Consider the special case where $\varphi(z) = \sum_{p=1}^{m} \varphi_p(z_p)$ with $z = (z_1, \cdots, z_m)$. 
Under this setting, a vector $\bar x$ is called a stationary point in \cite{royset2022consistent} if there {exist} $\bar y$ and $\bar z$ such that
\begin{equation}\label{eq:S_consistent}
    0 \in S(\bar x, \bar y, \bar z) \triangleq \big\{(f_1(\bar x), \cdots, f_m(\bar x)) - \bar z\big\} \times \big\{\partial\varphi_1(\bar z_1) \times \cdots \times \partial\varphi_m(\bar z_m) - \bar y\big\} \times \left(\sum_{p=1}^{m} \bar y_p \, \partial_{C} f_p(\bar x)\right),
\end{equation}
which can be equivalently written as
\begin{equation}\label{eq:stationary_royset}
    0 \in \sum_{p=1}^m \bar y_p \, \partial_C f_p(\bar x) \;\text{ for some }\; \bar y_p \in \partial\varphi_p(f_p(\bar x)) \quad p=1,\cdots,m.
\end{equation}

For any fixed ${k \in \N}$, {a surrogate} set-valued mapping $S^k$ can be defined similarly  as $S$ in \eqref{eq:S_consistent} by substituting $f_p$ and $\varphi_p$ with $f^k_p$ and $\varphi^k_p$ for each $p$. The cited paper provides sufficient conditions to ensure $\operatorname{Lim \, sup}_{k \rightarrow +\infty} (\operatorname{gph} S^k) \subset \operatorname{gph} S$,
which asserts  that any accumulation point $(\bar x, \bar y, \bar z)$ of a sequence $\{(x^k, y^k, z^k)\} $ with $0 \in S^k(x^k, y^k, z^k)$ yields a stationary point $\bar x$. 
Our study on the asymptotic stationarity differs from \cite{royset2022consistent} in the following aspects:
\gap
\begin{enumerate}
    \item Our outer convex function $\varphi$ is assumed to have the separable form $\sum_{p=1}^{m} \varphi_p$, while  \cite{royset2022consistent} allows a general proper, lsc, convex function. In addition, each $\varphi_p$ is fixed in our approximating problem while \cite{royset2022consistent} considers a sequence of convex functions $\{\varphi^k_p\}_{k \in \N}$ that epi-converges to $\varphi_p$.
    \item We do not require the inner function $f_p$ to be locally Lipschitz continuous.
    \gap\\
    \indent\indent If each $f_p$ is locally Lipschitz continuous and bounded from below, it then follows from {Theorem} \ref{thm:eADC_subdiff} that $f_p$ is c-ADC associated with $\{f^k_p=g^k_p-h^k_p\}_{k \in \N}$ such that $\partial f_p(x) \subset \partial_{A} f_p(x) \subset \partial_{C} f_p(x)$ and $\partial^\infty_{A} f_p(x) = \{0\}$ for any $x$. Moreover, by $f^k_p \Contconv f_p$, one has $T_p(x) = \{f_p(x)\}$. Thus, for any A-stationary point $\bar x$ induced by these ADC decompositions, there exists $\bar y_p \in \partial\varphi_p(f_p(\bar x))$ for each $p$ such that
        \begin{equation}\label{eq:stationary comparison}
            0 \in \sum_{p=1}^{m} \left\{ \bar y_p \, \partial_{A} f_p(\bar x)\right\}
            \;\subset\; \sum_{p=1}^{m} \left\{ \bar y_p \, \partial_{C} f_p(\bar x)\right\}.
        \end{equation}
    Hence, $\bar x$ is also a stationary point defined in \eqref{eq:stationary_royset}. Indeed, A-stationarity here can be sharper than the latter one as the last inclusion in \eqref{eq:stationary comparison} may not hold with equality.
    \gap\\
    \indent\indent When $f_p$ fails to be locally Lipschitz continuous for some $p$, {it is not known if \eqref{eq:S_consistent} is still a necessary condition for a local solution of \eqref{eq:cvx_composite}. This situation further complicates the fulfillment of conditions outlined in \cite[Theorem 2.4]{royset2022consistent}, especially the requirement of $f^k_p \Contconv f_p$, due to the potential discontinuity of $f_p$. As will be shown in Theorem \ref{thm:necessary_cond} below, despite these challenges, weak A-stationarity continues to be a necessary optimality condition under Assumption 1.}
\end{enumerate}

\gap
\noindent To proceed, for each $p$ and any $x \in \dom(\varphi_p \circ f_p)$, we define $S_p(x)$ to be a collection of sequences:
\begin{equation}\label{defn: Sp}
   { S_p(x) \,\triangleq \, \left\{\, \{x^k_p\}_{k \in \N}  \, \middle| \, x^k_p \rightarrow x\text{ with }\varphi_p(f^k_p(x^k_p)) \rightarrow \varphi_p(f_p(x)) \,\right\}.}
\end{equation}

\begin{theorem}[necessary conditions for optimality]
\label{thm:necessary_cond}
    Let $\bar x \in \bigcap_{p=1}^{m} \dom F_p$ be a local minimizer of problem \eqref{eq:cvx_composite}. Suppose that Assumption 1 and the following two conditions hold:\\
	(i) For each $p$ and any sequence $\{x^k_p\}_{k \in \N} \in S_p(\bar x)$, there is a positive integer $K$ such that
	\begin{equation}\label{eq:CQ1}
		0 \notin \partial_{C} f^k_p(x^k_p) \quad\text{or}\quad \mathcal N_{\dom\varphi_p}(f^k_p(x^k_p)) = \{0\} \quad\forall\, k \geq K,
	\end{equation}
	and
	\begin{equation}\label{eq:CQ2}
		\bigg[0 \in y_p \, \partial_A f_p(\bar x), \; y_p \in \bigcup \big\{\mathcal{N}_{\dom\varphi_p}(t_p) \mid t_p \in T_p(\bar x) \big\}\bigg]
        \quad\Longrightarrow\quad
        y_p = 0, \quad p=1, \cdots, m.
	\end{equation}
	(ii) One has
	\begin{equation}\label{eq:CQ3}
		\left[\sum\limits_{p=1}^{m} w_p = 0, \; w_p \in \partial^\infty (\varphi_p \circ f_p)(\bar x)\right]
        \quad\Longrightarrow\quad
        w_1 = \cdots = w_m = 0.
	\end{equation}
	Then $\bar x$ is an A-stationary point of \eqref{eq:cvx_composite}. Additionally, $\bar x$ is a weakly A-stationary point of \eqref{eq:cvx_composite} if $\interior(\dom \varphi_p) \neq \emptyset$ for each $p=1,\cdots,m$.
\end{theorem}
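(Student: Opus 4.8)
The plan is to apply Fermat's rule and a sum rule to the \emph{exact} objective in order to reduce A-stationarity to a per-summand claim, and only afterwards descend to the approximating composites $F^k_p=\varphi_p\circ f^k_p$ one index $p$ at a time. This ordering deliberately sidesteps the fact that $\sum_p F^k_p$ need not epi-converge to $\sum_p F_p$. Since $\bar x$ is a local minimizer of $F=\sum_{p=1}^m F_p$, Fermat's rule yields $0\in\widehat\partial F(\bar x)\subset\partial F(\bar x)$. Each $F_p$ is proper and lsc (lower semicontinuity from Assumption 1(c) and \cite[Proposition 7.4(a)]{rockafellar2009variational}), so the horizon condition \eqref{eq:CQ3} is exactly the qualification needed to invoke the limiting sum rule \cite[Corollary 10.9]{rockafellar2009variational}, giving $0=\sum_{p=1}^m v_p$ with $v_p\in\partial F_p(\bar x)=\partial(\varphi_p\circ f_p)(\bar x)$. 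It then suffices to show, for each $p$, that $v_p\in\big[y_p\,\partial_A f_p(\bar x)\big]\cup\big[\pm\partial^\infty_A f_p(\bar x)\backslash\{0\}\big]$ for some $y_p\in\bigcup\{\partial\varphi_p(t_p)\mid t_p\in T_p(\bar x)\}$; adding these memberships produces \eqref{eq:A-stationary}.

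Fix $p$. Because $f^k_p$ is continuous and $\varphi_p$ is lsc, each $F^k_p$ is lsc, and $F^k_p\Epiconv F_p$ by Assumption 1(c). Hence \cite[corollary 8.47(b)]{rockafellar2009variational} lets me write $v_p$ as a limit $w^k_p\to_N v_p$ of regular subgradients $w^k_p\in\widehat\partial F^k_p(x^k_p)$ taken along a sequence $x^k_p\to\bar x$ with $F^k_p(x^k_p)\to F_p(\bar x)$; that is, $\{x^k_p\}\in S_p(\bar x)$. I then apply the chain rule to $w^k_p\in\widehat\partial F^k_p(x^k_p)\subset\partial(\varphi_p\circ f^k_p)(x^k_p)$. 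Since $f^k_p$ is locally Lipschitz (a real-valued difference of convex functions) and $\mathcal N_{\dom\varphi_p}(t)=\partial^\infty\varphi_p(t)$ for convex $\varphi_p$, condition \eqref{eq:CQ1} supplies precisely the constraint qualification for a chain rule under a convex outer function (see, e.g., \cite[Theorem 10.49]{rockafellar2009variational}), giving $w^k_p\in\partial(y^k_p f^k_p)(x^k_p)$ for some $y^k_p\in\partial\varphi_p(f^k_p(x^k_p))$. Writing $y^k_p f^k_p=y^k_p g^k_p-y^k_p h^k_p$ and applying the sum rule to the two convex pieces (valid for either sign of $y^k_p$) yields a representation $w^k_p=y^k_p s^k_p$ with $s^k_p\in\partial g^k_p(x^k_p)-\partial h^k_p(x^k_p)$.

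It remains to pass to the limit. By Assumption 1(b) the scalars $f^k_p(x^k_p)$ are bounded, so along a further subsequence $f^k_p(x^k_p)\to t_p$, and lower semicontinuity of $\varphi_p$ together with $F^k_p(x^k_p)\to F_p(\bar x)<+\infty$ forces $t_p\in\dom\varphi_p$ and $t_p\in T_p(\bar x)$. I first rule out blow-up of the multipliers: if $|y^k_p|\to+\infty$, then $s^k_p=w^k_p/y^k_p\to 0$, so $0\in\partial_A f_p(\bar x)$, while the normalized multipliers $y^k_p/|y^k_p|$ subconverge to some $\hat y_p\neq0$ lying in $\mathcal N_{\dom\varphi_p}(t_p)$ (the limiting direction of unbounded convex subgradients), contradicting \eqref{eq:CQ2}. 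Hence $\{y^k_p\}$ is bounded and subconverges to some $y_p$, and outer semicontinuity of $\partial\varphi_p$ gives $y_p\in\partial\varphi_p(t_p)\subset\bigcup\{\partial\varphi_p(t)\mid t\in T_p(\bar x)\}$. If $y_p\neq0$, then $s^k_p=w^k_p/y^k_p\to v_p/y_p$, and by the definition of $\partial_A$ (with $x^k_p\to\bar x$) we get $v_p/y_p\in\partial_A f_p(\bar x)$, i.e.\ $v_p\in y_p\,\partial_A f_p(\bar x)$. If $y_p=0$ and $\{s^k_p\}$ is bounded, then $v_p=0\in 0\cdot\partial_A f_p(\bar x)$; if $y_p=0$ and $\|s^k_p\|\to+\infty$, then $w^k_p\to v_p\neq0$ forces $y^k_p\neq0$ eventually, so with $\lambda_k=|y^k_p|\downarrow0$ we have $\lambda_k s^k_p=\pm w^k_p\to\pm v_p$, whence $v_p\in\pm\partial^\infty_A f_p(\bar x)\backslash\{0\}$ by the definition of the approximate horizon subdifferential. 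In every case $v_p$ lies in the required set, and $0=\sum_p v_p$ certifies that $\bar x$ is A-stationary. The weak A-stationarity claim under $\interior(\dom\varphi_p)\neq\emptyset$ for each $p$ then follows immediately from Remark \ref{remark:A-stationary}(ii).

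I expect the delicate point to be the limiting analysis of the multipliers $y^k_p$: showing that \eqref{eq:CQ2} genuinely excludes the unbounded regime, and correctly allocating $v_p$ between the bilinear term $y_p\,\partial_A f_p(\bar x)$ and the one-sided horizon term $\pm\partial^\infty_A f_p(\bar x)\backslash\{0\}$, with the correct sign, in the residual case $y_p=0$ but $\|s^k_p\|\to+\infty$. A secondary technical care is the validity of the chain rule at recovery points where $f^k_p(x^k_p)$ approaches $\operatorname{bdry}(\dom\varphi_p)$, which is exactly the situation covered by the two clauses of \eqref{eq:CQ1}.
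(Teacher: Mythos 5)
Your proposal is correct and follows essentially the same route as the paper's proof: Fermat's rule plus the limiting sum rule under \eqref{eq:CQ3}, approximation of each $\partial(\varphi_p\circ f_p)(\bar x)$ by subgradients of $\varphi_p\circ f^k_p$ along sequences in $S_p(\bar x)$ via epi-convergence, the nonsmooth multiplier rule under \eqref{eq:CQ1} to extract $w^k_p=y^k_p s^k_p$ with $s^k_p\in\partial g^k_p(x^k_p)-\partial h^k_p(x^k_p)$, and then the same limit analysis in which \eqref{eq:CQ2} rules out unbounded multipliers and the case split on $\bar y_p=0$ versus $\bar y_p\neq0$ allocates $v_p$ to $\bar y_p\,\partial_A f_p(\bar x)$ or to $\pm\partial^\infty_A f_p(\bar x)\backslash\{0\}$. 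The only cosmetic differences are the citation ([Theorem 10.49] versus the paper's [Exercise 10.52]) and that the paper spells out, via the relative continuity of the univariate convex $\varphi_p$ on its domain, why the normalized multipliers land in $\mathcal N_{\dom\varphi_p}(\bar z_p)$ — a detail you compress but do not get wrong.
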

\begin{proof}
	By using Fermat's rule \cite[Theorem 10.1]{rockafellar2009variational} and the sum rule of the limiting subdifferentials \cite[Corrollary 10.9]{rockafellar2009variational} due to the condition \eqref{eq:CQ3}, we have
	\begin{equation}\label{eq:neccessary1}
	\begin{split}
		0
		&\;\in \partial \left[\sum\limits_{p=1}^{m} (\varphi_p \circ f_p)(\bar x)\right]
            \subset \sum\limits_{p=1}^{m} \partial (\varphi_p \circ f_p)(\bar x)
		\;\overset{(\rm\romannumeral1)}{\subset}\; \sum\limits_{p=1}^{m} \bigcup_{\{x^k_p\}_{k \in \N} \in {S_p(\bar x)}} \Limsup_{k \rightarrow +\infty} \;\partial (\varphi_p \circ f^k_p)(x^k_p)\\
		&\overset{(\rm\romannumeral2)}{\subset} \sum\limits_{p=1}^{m} \bigcup_{\{x^k_p\}_{k \in \N} \in {S_p(\bar x)}} \Limsup_{k \rightarrow +\infty} \;\bigcup\left\{ \partial (y^k_p \, f^k_p)(x^k_p) \,\middle|\, y^k_p \in \partial\varphi_p(f^k_p(x^k_p)) \right\}\\
		&\overset{(\rm\romannumeral3)}{\subset} \sum\limits_{p=1}^{m} \bigcup_{\{x^k_p\}_{k \in \N} \in {S_p(\bar x)}} \Limsup_{k \rightarrow +\infty} \;\left\{ y^k_p \, v^k_p \,\middle| \, y^k_p \in \partial\varphi_p(f^k_p(x^k_p)), \, v^k_p \in \partial_C f^k_p(x^k_p) \right\}\\
		&\overset{(\rm\romannumeral4)}{\subset} \sum\limits_{p=1}^{m} \bigcup_{\{x^k_p\}_{k \in \N} \in {S_p(\bar x)}} \Limsup_{k \rightarrow +\infty} \;\left\{ y^k_p \, v^k_p \,\middle|\, y^k_p \in \partial\varphi_p(f^k_p(x^k_p)),\, v^k_p \in \left[\partial g^k_p(x^k_p) - \partial h^k_p(x^k_p)\right] \right\}.
	\end{split}
	\end{equation}
	The inclusion $(\rm\romannumeral1)$ is due to $\varphi_p \circ f^k_p \Epiconv \varphi_p \circ f_p$ in Assumption 1(c) and approximation of subgradients under epi-convergence \cite[corollary 8.47]{rockafellar2009variational} and \cite[Proposition 8.46(e)]{rockafellar2009variational}; $(\rm\romannumeral2)$ follows from the nonsmooth Lagrange multiplier rule \cite[Exercise 10.52]{rockafellar2009variational} due to the {local Lipschitz continuity} of $f^k_p$ \cite[Example 9.14]{rockafellar2009variational} and the condition \eqref{eq:CQ1}; $(\rm\romannumeral3)$ and $(\rm\romannumeral4)$ use the calculus rules of the Clarke subdifferential \cite[Chapter 2.3]{clarke1990optimization}. For each $p$, any sequence $\{x^k_p\}_{k \in \N} \in S_p(\bar x)$ and any element
	\[
		\bar w_p \in \Limsup_{k \rightarrow +\infty} \;\left\{ y^k_p \, v^k_p \,\middle|\, y^k_p \in \partial\varphi_p(f^k_p(x^k_p)), v^k_p \in \big[\partial g^k_p(x^k_p) - \partial h^k_p(x^k_p)\big] \right\},
	\]
	there is a subsequence $w^k_p \rightarrow_N \bar w_p$ with $w^k_p = y^k_p \, v^k_p$ for some $N \in \mathbb{N}^\sharp_\infty$. Next, we show the existence of $\bar y_p \in \bigcup \{\partial\varphi_p(t_p) \mid t_p \in T_p(\bar x)\}$ for each $p$ such that
	\begin{equation}\label{eq:w_infty_split}
		\bar w_p \in \left\{\,\bar y_p \,\partial_{A}f_p(\bar x) \,\right\} \, \cup \, \big[ \pm \partial^\infty_A f_p(\bar x)\backslash\{0\} \,\big].
	\end{equation}
	By Assumption 1(b), the subsequence $\{f^k_p(x^k_p)\}_{k \in N}$ is bounded. Taking a subsequence if necessary, we can suppose that $f^k_p(x^k_p) \to_N \bar z_p \in T_p(\bar x)$. If $\{y^k_p\}_{k \in N}$ is unbounded, then $\{v^k_p\}_{k \in N}$ has a subsequence converging to $0$ and, thus, $0 \in \partial_A f_p(\bar x)$. Additionally, there exists $\widetilde y_p \neq 0$ such that
    \begin{equation}\label{eq:horizon2normal}
        \hskip -0.07in
        \frac{y^k_p}{|y^k_p|} \rightarrow_N \widetilde y_p \in {\Limsup_{k(\in N) \rightarrow +\infty}}^\infty \; \partial\varphi_p (f^k_p(x^k_p))
        \overset{(\rm\romannumeral5)}{=}{\Limsup_{k(\in N) \rightarrow +\infty}}^\infty \; \widehat\partial\varphi_p (f^k_p(x^k_p))
        \overset{(\rm\romannumeral6)}{\subset} \partial^\infty \varphi_p(\bar z_p) \overset{(\rm\romannumeral7)}{=} \mathcal{N}_{\dom\varphi_p}(\bar z_p).
    \end{equation}
    The equation $(\rm\romannumeral5)$ follows from \cite[Proposition 8.12]{rockafellar2009variational} by the convexity of $\varphi_p$. From $\{x^k_p\}_{k \in \N} \in {S_p(\bar x)}$ and $\bar x \in \dom F_p$, we must have $f^k_p(x^k_p) \in \dom\varphi_p$ for sufficiently large $k \in N$. Since $\varphi_p$ is lsc, it holds that $\varphi_p(\bar z_p) \leq \liminf_{k(\in N) \rightarrow +\infty} \varphi_p(f^k_p(x^k_p)) = \varphi_p(f_p(\bar x))$ and, thus, $\bar z_p \in \dom \varphi_p$. 
    {Also, notice that $\varphi_p$ is continuous relative to its domain as it is univariate convex and lsc \cite[Theorem 10.2]{rockafellar1970convex}.
    This continuity implies $\varphi_p(f^k_p(x^k_p)) \rightarrow_N \varphi_p(\bar z_p)$.} The inclusion $(\rm\romannumeral6)$ follows directly from the definition of the horizon subdifferential. Lastly, $(\rm\romannumeral7)$ is due to the lower semicontinuity of {the proper convex function} $\varphi$ and \cite[Proposition 8.12]{rockafellar2009variational}. Therefore, we have $(0 \neq)\widetilde y_p \in \bigcup\{\mathcal{N}_{\dom\varphi_p}(t_p) \mid t_p \in T_p(\bar x) \big\}$ {with $0 \in \widetilde y_p \partial_A \, f_p(\bar x)$ due to} $0 \in \partial_{A} f_p(\bar x)$, contradicting \eqref{eq:CQ2}. So far, we conclude that $\{y^k_p\}_{k \in N}$ is a bounded sequence. Suppose that $y^k_p \rightarrow_N \bar y_p$ and, thus, $\bar y_p \in \partial\varphi_p(\bar z_p)$ by the outer semicontinuity of $\partial\varphi_p$ \cite[Proposition 8.7]{rockafellar2009variational}.

    \gap
    
    \noindent\underline{\textsl{Case 1.}} If $\bar y_p = 0$, inclusion \eqref{eq:w_infty_split} holds trivially for $\bar w_p = 0$, and for $\bar w_p \neq 0$ we can find a subsequence $\{|y^k_p|\}_{k \in N^\prime} \downarrow 0$ such that {$\{|y^k_p| \, v^k_p\}_{k \in N^\prime}$ converges to $\bar w_p$ or $ -\bar w_p(\neq 0)$} with $v^k_p \in \big[\partial g^k_p(x^k_p) - \partial h^k_p(x^k_p)\big]$ for all $k \in N^\prime$. Therefore, \eqref{eq:w_infty_split} follows from
	\[
		\bar w_p \in \left[\,\Big(\pm {\displaystyle\Limsup_{k \rightarrow +\infty}}^\infty \;\big[\partial g^k_p(x^k_p) - \partial h^k_p(x^k_p)\big] \Big) \backslash \{0\}\,\right] \subset \big[\pm\partial^\infty_A f_p(\bar x) \backslash \{0\}\big].
	\]

	\noindent\underline{\textsl{Case 2.}} Otherwise, $\|v^k_p\| \rightarrow_N \|\bar w_p\| / |\bar y_p|$. This means that $\{v^k_p\}_{k \in N}$ is bounded. Suppose $v^k_p \to_N \bar v_p$. Then, $\bar v_p \in \operatorname{Lim \, sup}_{k \rightarrow +\infty} \big[\partial g^k_p(x^k_p) - \partial h^k_p(x^k_p)\big] \subset \partial_{A} f_p(\bar x)$, and \eqref{eq:w_infty_split} is evident from $\bar w_p = \bar y_p \, \bar v_p$.
    
    \vspace{0.05in}
    In either case, we have proved \eqref{eq:w_infty_split}. Combining \eqref{eq:neccessary1} with \eqref{eq:w_infty_split}, for some $\bar y_p \in \bigcup\{\partial \varphi_p(t_p) \mid t_p \in T_p(\bar x)\}$, we know that $\bar x$ is an A-stationary point of \eqref{eq:cvx_composite}. 
    {The final assertion follows from Remark \ref{remark:A-stationary}(ii).} 
\end{proof}

\subsection{An example of A-stationarity.}

We present an example to illustrate the concept of A-stationarity and to study its relationship with other known optimality conditions.

\gap\gap
\noindent
\textbf{Example 3.1} (bi-parametrized two-stage stochastic programs). 
Consider the following bi-parametrized two-stage stochastic program with fixed scenarios described in \cite{liu2020two}:
\begin{equation}\label{eq:2SP_1st}
    \Min_{x \in {\R^n}} \;\, \theta(x) + \frac{1}{m_1} \sum_{p=1}^{m_1} f_p(x) \quad
    \mbox{subject to} \;\; \phi_p (x) \leq 0, \quad p=1,\cdots,m_2,
\end{equation}
where $\theta, \phi_p: {\R^n} \to \R$ are convex, continuously differentiable for $p = 1, \cdots, m_2$, and $f_p$, as defined in \eqref{eq:value_func}, is real-valued for $p = 1, \cdots, m_1$. 
At $x = \bar x$, let $Y_p(\bar x)$ and $\Lambda_p(\bar x)$ represent the optimal solutions and multipliers for each second-stage problem \eqref{eq:value_func}. {Suppose that $Y_p(\bar x)$ and $\Lambda_p(\bar x)$ are bounded.}
Note that $\theta$ and $\phi_p$ are ADC functions since they are convex. Example 2.1 shows that $f_p$ is an ADC function, and therefore, problem \eqref{eq:2SP_1st} is a specific case of the composite model \eqref{eq:cvx_composite}. 
Given an A-stationary point $\bar x$ of \eqref{eq:2SP_1st}, under the assumptions of Example 2.1, we have
    \begin{equation}
    \begin{aligned}
    \label{eq:KKT}
        \hskip -0.1in
        0 &\in \nabla \theta(\bar x) + \frac{1}{m_1} \sum_{p=1}^{m_1}\big( \left\{ \partial_{A} f_p(\bar x) \right\} \cup \left[\,\pm \partial^\infty_{A} f_p(\bar x) \backslash \{0\} \right] \big) + \sum_{p=1}^{m_2} \bar\mu^{\,m_1 + p} \, \nabla \phi_p(\bar x)\\
        &\subset \nabla \theta(\bar x) + \frac{1}{m_1} \sum_{p=1}^{m_1} \left\{\partial_1 \overline{f}_p (\bar x, \bar x) - \partial_2(-\overline{f}_p) (\bar x, \bar x) \right\} + \sum_{p=1}^{m_2} \bar\mu^{\,m_1 + p} \, \nabla \phi_p(\bar x),
    \end{aligned}
    \end{equation}
    where $\bar\mu^{m_1 + p} \in \mathcal{N}_{(-\infty,0]} (\phi_p(\bar x))$ for $p=1,\cdots,m_2$ {and $\overline f_p$ is defined in \eqref{eq:icc lifted} for $p=1, \cdots, m_1$}. By assumptions, both $\Lambda_p(\bar x)$ and $Y_p(\bar x)$ are nonempty, bounded, and
    \[
        \Lambda_p(\bar x) \times Y_p(\bar x) = \left\{ (\bar y^p, \bar\mu^p) \,\middle|\,
            c^p + C^p \bar x + Q^p \, \bar y^p + (B^p)^\top \bar\mu^p = 0,\;
		 0 \leq b^p - A^p \bar x - B^p \bar y^p \; \perp \; \bar\mu^p \geq 0
        \right\}.
    \]
     It then follows from Danskin's Theorem \cite[Theorem 2.1]{clarke1975generalized} that
    \[
    \begin{aligned}
        \partial_1 \overline{f}_p (\bar x, \bar x) &= \Conv \left\{(A^p)^\top \bar\mu^p \,\middle|\, \bar\mu^p \in \Lambda_{p}(\bar x) \right\} = \left\{(A^p)^\top \bar\mu^p \,\middle|\, \bar\mu^p \in \Lambda_p(\bar x) \right\},\\[0.05in]
        \partial_2 (-\overline{f}_p) (\bar x, \bar x) &= \Conv \left\{-(C^p)^\top \bar y^p \,\middle|\, \bar y^p \in Y_p(\bar x) \right\} = \left\{-(C^p)^\top \bar y^p \,\middle|\, \bar y^p \in Y_p(\bar x) \right\}.
    \end{aligned}
    \]
    Combining these expressions with \eqref{eq:KKT}, we obtain
    \[
	\left\{
	\begin{array}{ll}
	  0 = \nabla \theta(\bar x) + \displaystyle\frac{1}{m_1} \sum\limits_{p=1}^{m_1} \left[ (C^p)^\top \bar y^p + (A^p)^\top \bar\mu^p \right] + \sum_{p=1}^{m_2} \bar\mu^{m_1 + p} \, \nabla\phi_p(\bar x),\\
	  c^{\,p} + C^p \bar x + Q^p \, \bar y^p + (B^p)^\top \bar\mu^p = 0,\;\,
        0 \leq b^p - A^p \bar x - B^p \bar y^p \; \perp \; \bar\mu^p \geq 0, \quad p = 1, \cdots, m_1,\\[0.05in]
        0 \leq \phi_p(\bar x) \; \perp \; \bar\mu^{\,m_1 + p} \geq 0, \quad p=1,\cdots,m_2,
	\end{array}
	\right.
    \]
    which are the Karush-Kuhn-Tucker (KKT) conditions for the deterministic equivalent of \eqref{eq:2SP_1st}.

\section{A computational algorithm.}
\label{sec4:algorithm}

In this section, we consider a double-loop algorithm for solving problem \eqref{eq:cvx_composite}. The inner loop finds an approximate stationary point of the perturbed composite optimization problem
\begin{equation}
\label{eq:cvx_composite_perturb}
    \Min_{x\in \R^n} \;\, \sum_{p=1}^{m} \left[F^k_p(x) \triangleq \varphi_p(f^k_p(x))\right]
\end{equation}
by solving a sequence of convex subproblems, while the outer loop drives $k \to +\infty$. It is important to note the potential infeasibility in \eqref{eq:cvx_composite_perturb} because $[F^k_p = \varphi_p\circ f^k_p] \Epiconv F_p$ in Assumption 1(c), together with $\dom(\varphi_p \circ f_p) \neq \emptyset$, does not guarantee $\dom (\varphi_p \circ f^k_p) \neq \emptyset$ for all ${k \in \N}$. {This can be seen from the example of $\varphi(t) = \delta_{(-\infty, 0]}(t)$,  $f(x) = \max\{ x, 0\} - 1/10$ and $f^k(x) = \max\{ x, 0\} + 1/k - 1/10$. Obviously $\dom(\varphi \circ f) = (-\infty, 1/10]$ and $\varphi \circ f^k \Epiconv \varphi \circ f$ by \cite[Theorem 2.4(d)]{royset2022consistent}, but we have $\dom(\varphi \circ f^k) = \emptyset$ for $k=1,\cdots,9$. Even though $\dom(\varphi_p \circ f^k_p) \neq \emptyset$ for all ${k \in \N}$ and each $p$, this does not imply the feasibility of convex subproblems used in the inner loop to approximate \eqref{eq:cvx_composite_perturb}.}

For simplicity of the analysis, we assume that  in problem \eqref{eq:cvx_composite}, $\varphi_p$ is real-valued for $p=1, \cdots, m_1$, and $\varphi_p = \delta_{(-\infty,0]}$ for $p=m_1 + 1, \cdots, m$. Namely, the problem takes the following form:
\begin{equation}
\label{eq:cvx_composite_constraints}
    \Min_{x\in \R^n}\;\, \sum_{p=1}^{m_1} \left[F_p(x) = \varphi_p \big(f_p(x)\big)\right]\quad
    \mbox{subject to} \;\, f_p(x) \leq 0, \;\, p=m_1 + 1, \cdots, m.
\tag{CP$_1$}
\end{equation}
For $p=1,\cdots,m_1$, the convexity of each real-valued function $\varphi_p$ implies its continuity by \cite[corollary 10.1.1]{rockafellar1970convex}. Consequently, the composite function $F^k_p = \varphi_p \circ f^k_p$ is also continuous for $p=1,\cdots,m_1$ and $k \in \N$ due to the continuity of each approximating function $f^k_p$. It is important to note that model \eqref{eq:cvx_composite_constraints} still covers discontinuous objective functions since each $f_p$ can be discontinuous, even though the approximating sequence $\{f^k_p\}_{k \in \N}$ only consists of locally Lipschitz continuous functions.

\subsection{Assumptions {and examples}.}
\label{subsec:Assump}

Firstly, we make an assumption to address the feasibility issue outlined at the start of this section. 
For all $k \in \N$ and $p=m_1 + 1, \cdots, m$, define
\[
    {\alpha^{k}_p \triangleq \sup_{x \in X^k} \left[f^{k+1}_p(x) - f^k_p(x)\right]_+}
    \text{ with }X^k \triangleq \left\{ x \in \R^n \,\middle|\, f^k_p(x) \leq 0,\, p=m_1 + 1, \cdots, m\right\}.
\]
Based on these auxiliary sequences, we need an initial point $x^0$ that is strictly feasible to the constraints $f^0_p(x) \leq 0$ for each $p = m_1+1,\cdots,m$.

\gap
\begin{center}
\fbox{\parbox{0.98\textwidth}{
\noindent{\bf Assumption 2} ({\bf strict feasibility}) There exist $x^0$ and nonnegative sequences $\big\{\widehat{\alpha^k_p}\big\}_{k \in \N}$ for $p=m_1+1, \cdots, m$, such that $\alpha^k_p \leq \widehat{\alpha^k_p}$ for all ${k \in \N}$ and
\vskip -0.1in
\[
    \sum_{{k^\prime}=0}^{+\infty} \widehat{\alpha^{{k^\prime}}_p} < +\infty, \quad f^0_p(x^0) \leq -\sum_{{k^\prime}=0}^{+\infty} \widehat{\alpha^{{k^\prime}}_p}, \qquad\, p=m_1+1, \cdots, m.
\]
\vskip -0.1in
}}
\end{center}

To streamline our notation and analysis, we extend the definitions of $\alpha^k_p$ and introduce $\widehat{\alpha^k_p}$ for $p=1, \cdots, m_1$ by setting $\alpha^k_p = \widehat{\alpha^k_p} = 0$ for all $k \in \N$ and $p=1, \cdots, m_1$.
{
Since the quantity $\alpha^k_p$ depends on the sequence $\{f^k_p\}_{k \in \N}$, Assumption 2 poses a condition for this approximating sequence. Consider a fixed index $p \in \{m_1+1, \cdots, m\}$. One can use the following way to construct $\{\alpha^k_p\}_{k\in N}$. Suppose that there exist a  function $\widetilde{f_p}: \R^n \times [0,1] \to \R$ and a nonnegative sequence $\gamma_k \downarrow 0$ such that
\[
    \widetilde{f_p}(x, \gamma_k) = f^k_p(x)
    \quad\text{and}\quad
    \widetilde{f_p}(x, 0) = f_p(x).
\]
Additionally, assume that for any fixed $x$, the function $\widetilde{f_p}(x, \cdot)$ is continuous on $[0,1]$ and differentiable on $(0,1)$, and there exists a constant $C_p$ such that $\big|\nabla_{\gamma} \widetilde{f_p}(x,\gamma)\big| \leq C_p$ for any $x$ and $\gamma \in (0,1)$. For any fixed $x$, by the mean value theorem, there exists a point $\bar\gamma_k \in (\gamma_{k+1}, \gamma_k)$  such that $f^{k+1}_p(x) - f^k_p(x) = (\gamma_{k+1} - \gamma_k) \nabla_{\gamma} \widetilde{f_p}(x, \bar\gamma_k)$. 
Thus,
\[
    \sum^{+\infty}_{k^\prime=0} \alpha^{k^\prime}_p
    \leq \sum^{+\infty}_{k^\prime=0} (\gamma_{k^\prime} - \gamma_{k^\prime+1}) \sup_{x \in \R^n} \left|\nabla_{\gamma} \widetilde{f_p}(x, \bar\gamma_{k^\prime}) \right|
    \leq \sum^{+\infty}_{k^\prime=0} \left[\,\widehat{\alpha^{k^\prime}_p} \triangleq C_p (\gamma_{k^\prime} - \gamma_{k^\prime+1})\right]
    = C_p \gamma_0 < +\infty.
\]
Two more assumptions on the approximating sequences $\{f_p^k\}_{k \in \N}$ are needed.

\begin{center}
\fbox{\parbox{0.98\textwidth}{
\noindent{\bf Assumption 3} ({\bf smoothness of  $g_p^k$ or $h_p^k$})
For each ${k \in \N}$, there exists $\ell_k>0$ such that
\[
	\min\Big\{\, \Hausd\big(\partial g^k_p(x), \partial g^k_p(x^\prime)\big),\,
	\Hausd\big(\partial h^k_p(x), \partial h^k_p(x^\prime)\big) \,\Big\} \leq \ell_k \|x^\prime - x\|
	\quad\forall\, x, x^\prime \in \R^n, \; p = 1, \cdots, m.
\]

\vskip -0.1in
\noindent{\bf Assumption 4} ({\bf level-boundedness}) For each ${k \in \N}$, the function ${H^k \triangleq} \sum_{p=1}^{m} F^k_p$ is level-bounded, i.e., for any $r \in \R$, the set
\[
    {\left\{ x \in \R^n \,\middle|\, \sum^{m_1}_{p=1} \varphi_p(f^k_p(x)) + \sum^{m}_{p=m_1+1} \delta_{(-\infty,0]}(f^k_p(x)) \leq r \right\} =}
    \left\{ x \in \R^n \,\middle|\, \sum\limits_{p=1}^{m_1} \varphi_p\left(f^k_p (x)\right) \leq r\right\}
    \cap X^k
\]
is bounded.
}}
\end{center}

Assumption 3 imposes conditions on the Lipschitz continuity of the subdifferential mapping $\partial g^k_p$ or $\partial h^k_p$, which will be used to determine the termination rule of the inner loop.
A straightforward sufficient condition for this assumption is that, for each $p$ and $k$, {at least one of the functions $g^k_p$ and $h^k_p$} is $\ell_k$-smooth, i.e., $\|\nabla g^k_p(x) - \nabla g^k_p(x^\prime)\| \leq \ell_k \|x - x^\prime\|$ or $\|\nabla h^k_p(x) - \nabla h^k_p(x^\prime)\| \leq \ell_k \|x - x^\prime\|$ for any $x, x^\prime \in \R^n$. {We also remark that Assumption 3 can hold even though both $g^k_p$ and $h^k_p$ are nondifferentiable. This can be seen from the following univariate example: $g^k_p(x) = |x|$ and $h^k_p(x) = |x - 1|$ for any $x \in \R$. It is not difficult to verify that Assumption 3 holds for $\ell_k = 2$.}
Assumption 4 is a standard condition to ensure the boundedness of the generated sequences for each ${k \in \N}$. 

\gap
In addition, we need a technical assumption to ensure the boundedness of the multiplier sequences in our algorithm.

\begin{center}
\fbox{\parbox{0.98\textwidth}{
\noindent{\bf Assumption 5} ({\bf an asymptotic constraint qualification}) For any {$\bar{x} \in {\bigcap_{p=1}^{m}} \dom F_p$}, if there exists $\{y_p\}_{p=1}^{m}$ satisfying $0 = \sum_{p=1}^{m} y_p \, v_p$ where for each $p$
(with the definition of $T_p(\bar{x})$ in \eqref{def:T_p}),
\vskip -0.1in
\begin{equation}\label{eq:ACQ}
    (y_p,\, v_p) \in \Big( \bigcup \big\{\mathcal{N}_{\dom\varphi_p}(t_p) \mid t_p \in T_p(\bar x) \big\} \times \Conv\partial_A f_p(\bar x) \Big)
    \cup
    \Big(\R \times \left[\,\partial^\infty_A f_p(\bar x) \backslash \{0\}\,\right]\Big),
\end{equation}
then we must have $y_{1} = \cdots = y_{m} = 0$.
}}
\end{center}

The normal cone $\mathcal{N}_{\dom \varphi_p}(t_p)$ in \eqref{eq:ACQ} reduces to $\{0\}$ for $p = 1, \cdots, m_1$ and $\mathcal{N}_{(-\infty, 0]} (t_p)$ for $p = m_1 + 1, \cdots, m$.
According to the definitions of $\partial_A f_p(\bar x)$ and $\partial^\infty_A f_p(\bar x)$, Assumption 5 depends on the approximating sequences $\{f^k_p\}_{k \in \N}$ for $p = 1,\cdots,m$. 
It holds trivially if each $\varphi_p$ is real-valued and $\partial^\infty_{A} f_p(\bar x) = \{0\}$. {By Theorem \ref{thm:eADC_subdiff}(b), the condition $\partial^\infty_A f_p(\bar x) = \{0\}$ holds when the ADC decompositions are constructed using the Moreau envelope, provided that $f_p$ is locally Lipschitz continuous and bounded from below. However, in general, Assumption 5 is not easy to verify.} For Example 3.1, the assumption translates into
\[
    \left[\sum_{p=1}^{m_2} \lambda_p \nabla\phi_p(\bar x) = 0,\; \lambda_p \in \mathcal{N}_{(-\infty,0]} (\phi_p(\bar x)),\;\; p=1,\cdots,m_2\right]
    \quad\Longrightarrow\quad \lambda_1=\cdots=\lambda_{m_2}=0.
\]
This is equivalent to the Mangasarian-Fromovitz constraint qualiﬁcation (MFCQ) for problem \eqref{eq:2SP_1st} by \cite[Example 6.40]{rockafellar2009variational}; see also \cite{rockafellar1993lagrange}. 

Furthermore, if each $f_p$ is c-ADC associated with $\{f^k_p = g^k_p - h^k_p\}_{k \in \N}$ such that $\Conv\partial_{A} f_p(\bar x) = \partial_C f_p(\bar x)$, and $\partial^\infty_{A} f_p(\bar x) = \{0\}$, Assumption 5 states that
\[
    \left[\, 0 \in \sum_{p=1}^{m} y_p \, \partial_C f_p(\bar x),\quad y_p \in \mathcal{N}_{\dom\varphi_p}(f_p(\bar x)), \;\; p=1, \cdots, m \,\right]
    \quad\Longrightarrow\quad
    y_1 = \cdots = y_{m} = 0.
\]
This condition aligns with the constraint qualification for the composite optimization problem in \cite[Proposition 2.1]{royset2022consistent}, and is stronger than the condition in the nonsmooth Lagrange multiplier rule \cite[Exercise 10.52]{rockafellar2009variational}. 
Finally, Assumption 5 implies the constraint qualifications \eqref{eq:CQ1}-\eqref{eq:CQ3} in Theorem \ref{thm:necessary_cond}. We formally present this conclusion in the following proposition. The proof of Proposition \ref{prop:assumption5} is given in {Appendix B}.


\begin{proposition}[consequences of Assumption 5]
\label{prop:assumption5}
    Suppose that Assumptions 1 and 5 hold, and $f^k_p \Epiconv f_p$ for each $p$. If $\sup\varphi_p=+\infty$ for $p\in I_1$, and $f_p$ is locally Lipschitz continuous for $p \in I_2$ (with the definitions of $I_1$ and $I_2$ in \eqref{defn: index for varphi_p}), then conditions \eqref{eq:CQ1}, \eqref{eq:CQ2}, and \eqref{eq:CQ3} hold {at any feasible point $\bar x$ of \eqref{eq:cvx_composite_constraints}. Consequently, any local solution of \eqref{eq:cvx_composite_constraints} is a (weakly) A-stationary point of \eqref{eq:cvx_composite_constraints}.}
\end{proposition}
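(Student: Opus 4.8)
The plan is to fix an arbitrary feasible $\bar x \in \bigcap_{p=1}^{m} \dom F_p$ and verify \eqref{eq:CQ1}, \eqref{eq:CQ2}, and \eqref{eq:CQ3} one at a time, each by contradiction against Assumption 5; the closing assertion is then immediate from Theorem \ref{thm:necessary_cond} together with Remark \ref{remark:A-stationary}(ii), since $\interior(\dom\varphi_p)\neq\emptyset$ for every $p$ in the standing form of \eqref{eq:cvx_composite_constraints} (indeed $\dom\varphi_p=\R$ for $p\le m_1$ and $\interior(\dom\varphi_p)=(-\infty,0)$ for $p>m_1$). A device I would use repeatedly is that any term indexed by $q$ can be forced to contribute $0$ to the sum $\sum_p y_p v_p$ in \eqref{eq:ACQ} with multiplier $y_q=0$: evaluating $\partial g^k_q-\partial h^k_q$ along $x^k\equiv\bar x$ gives a nonempty sequence, and along a subsequence its elements either stay bounded, so that $\partial_A f_q(\bar x)\neq\emptyset$ and one may pick $v_q\in\Conv\partial_A f_q(\bar x)$, or diverge, so that after normalization $\partial^\infty_A f_q(\bar x)\backslash\{0\}\neq\emptyset$ and one may pick $v_q$ there. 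In either case $(y_q,v_q)=(0,v_q)$ is admissible in \eqref{eq:ACQ} and $y_q v_q=0$.

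For \eqref{eq:CQ2}, suppose $0\in y_p\,\partial_A f_p(\bar x)$ with $y_p\in\mathcal{N}_{\dom\varphi_p}(t_p)$ for some $t_p\in T_p(\bar x)$, and assume for contradiction $y_p\neq0$. Dividing by $y_p$ gives $0\in\partial_A f_p(\bar x)\subset\Conv\partial_A f_p(\bar x)$, so $(y_p,0)$ is admissible in \eqref{eq:ACQ}; zeroing out every $q\neq p$ as above yields $0=\sum_q y_q v_q$ with $y_p\neq0$, contradicting Assumption 5. For \eqref{eq:CQ1}, the indices $p\le m_1$ are immediate because $\varphi_p$ is real-valued, whence $\dom\varphi_p=\R$ and $\mathcal{N}_{\dom\varphi_p}(\cdot)=\{0\}$. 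For a constraint index $p>m_1$ with $\varphi_p=\delta_{(-\infty,0]}$, suppose \eqref{eq:CQ1} fails along some $\{x^k_p\}_{k\in\N}\in S_p(\bar x)$; then for infinitely many $k$ one has $0\in\partial_C f^k_p(x^k_p)$ together with $\mathcal{N}_{(-\infty,0]}(f^k_p(x^k_p))\neq\{0\}$, the latter forcing $f^k_p(x^k_p)=0$. The DC Clarke inclusion $\partial_C f^k_p(x^k_p)\subset\partial g^k_p(x^k_p)-\partial h^k_p(x^k_p)$ then gives $0\in\partial g^k_p(x^k_p)-\partial h^k_p(x^k_p)$, and since the constant element $0$ is trivially bounded it passes to the outer limit to yield $0\in\partial_A f_p(\bar x)$; moreover $0\in T_p(\bar x)$ with $\mathcal{N}_{\dom\varphi_p}(0)=\R_+$. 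Taking $y_p=1$, $v_p=0$ and zeroing out the other terms again contradicts Assumption 5.

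The main work is \eqref{eq:CQ3}. Let $w_p\in\partial^\infty(\varphi_p\circ f_p)(\bar x)$ with $\sum_p w_p=0$. Using $\varphi_p\circ f^k_p\Epiconv\varphi_p\circ f_p$ from Assumption 1(c) and the approximation of horizon subgradients under epi-convergence \cite[Corollary 8.47(b)]{rockafellar2009variational}, each $w_p$ is a horizon limit $w_p=\lim_{k\in N}\lambda_k s^k$ with $\lambda_k\downarrow0$, $x^k\to\bar x$, and $s^k\in\widehat\partial(\varphi_p\circ f^k_p)(x^k)$. The local Lipschitz continuity of $f^k_p$ and convexity of $\varphi_p$ let me write $s^k=y^k v^k$ with $y^k\in\partial\varphi_p(f^k_p(x^k))$ and $v^k\in\partial g^k_p(x^k)-\partial h^k_p(x^k)$, exactly as in steps (ii)--(iv) of the proof of Theorem \ref{thm:necessary_cond}. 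I would then pass to the limit while tracking the boundedness of $\{y^k\}$ and $\{v^k\}$: Assumption 1(b) keeps $\{f^k_p(x^k)\}$ bounded, so along a subsequence $f^k_p(x^k)\to t_p\in T_p(\bar x)$; when $\{y^k\}$ diverges its normalization converges into $\partial^\infty\varphi_p(t_p)=\mathcal{N}_{\dom\varphi_p}(t_p)$, while bounded $\{v^k\}$ converge into $\partial_A f_p(\bar x)\subset\Conv\partial_A f_p(\bar x)$ and unbounded ones generate nonzero horizon directions in $\partial^\infty_A f_p(\bar x)$. The upshot I am aiming for is that every $w_p$ can be written as $y_p v_p$ with $(y_p,v_p)$ admissible in \eqref{eq:ACQ} and with the key property that $w_p\neq0$ forces $y_p\neq0$. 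Given this, $0=\sum_p w_p=\sum_p y_p v_p$ and Assumption 5 yields $y_p=0$, hence $w_p=0$, for every $p$.

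I expect the delicate point to be this last passage to the limit in \eqref{eq:CQ3}: one must organize the cases according to whether $\lambda_k y^k$, $\lambda_k|y^k|$, and $\|v^k\|$ remain bounded or diverge, rule out the combinations that would make $\lambda_k y^k v^k$ unbounded (and thus unable to converge to a finite $w_p$), and confirm in each surviving case that a nonzero $w_p$ is matched by a nonzero multiplier, so that Assumption 5 actually bites. The remaining ingredients — the DC Clarke inclusion, the identity $\partial^\infty\varphi_p=\mathcal{N}_{\dom\varphi_p}$ for proper lsc convex $\varphi_p$, and the zeroing-out device — are routine. Once \eqref{eq:CQ1}--\eqref{eq:CQ3} are in hand at every feasible $\bar x$, Theorem \ref{thm:necessary_cond} delivers A-stationarity of any local solution, and weak A-stationarity follows from the nonemptiness of the interiors noted above.
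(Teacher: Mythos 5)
Your treatment of \eqref{eq:CQ1} and \eqref{eq:CQ2} matches the paper's: both arguments contradict Assumption 5 after zeroing out the inactive indices (the paper does this implicitly, relying on Assumption 1(a) to guarantee nonempty $\partial g^k_q(\bar x)-\partial h^k_q(\bar x)$, exactly as in your ``device''), and your specialization of \eqref{eq:CQ1} to $\varphi_p=\delta_{(-\infty,0]}$ is legitimate since the claim is stated at feasible points of \eqref{eq:cvx_composite_constraints}. Where you genuinely diverge is \eqref{eq:CQ3}. The paper works directly on the limit composite: for $p\in I_2$ it invokes the chain rule \cite[Theorem 10.49]{rockafellar2009variational} (this is where local Lipschitz continuity of $f_p$ is consumed), and for $p\in I_1$ it proves a bespoke chain rule (Lemma \ref{lem:chainrule_univar} in Appendix B, via the parametric representation $\varphi(f(x))=\inf_\alpha[\delta_{\operatorname{epi}f}(x,\alpha)+\varphi(\alpha)]$ and the proximal-normal-cone Lemma \ref{lem:prox_normal_cone}; this is where $\sup\varphi_p=+\infty$ is consumed, to get level-boundedness), and only then feeds the resulting estimates of $\partial^\infty(\varphi_p\circ f_p)(\bar x)$ into Assumption 5. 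You instead pull $\partial^\infty F_p(\bar x)$ back through $F^k_p\Epiconv F_p$ via \cite[Corollary 8.47(b)]{rockafellar2009variational} and do the multiplier-rule decomposition at the level of $\varphi_p\circ f^k_p$, whose inner function is automatically locally Lipschitz — the same move the paper makes for $\partial F_p(\bar x)$ in Theorem \ref{thm:necessary_cond}, now applied to horizon subgradients. This buys uniform treatment of $I_1$ and $I_2$, bypasses Appendix B entirely, and does not visibly use the two supplementary hypotheses (which the proposition nonetheless grants you). The cost is the horizon-limit case analysis you flag: it does close, but you must include the degenerate sub-case where $\lambda_k y^k$ diverges while $v^k\to0$ — there $w_p$ need not equal $y_pv_p$ for any natural admissible pair, and the case must be killed by observing that $(\widetilde y_p,0)$ with $\widetilde y_p\in\mathcal N_{\dom\varphi_p}(t_p)\setminus\{0\}$ and $0\in\partial_A f_p(\bar x)$ already violates Assumption 5, so the configuration is vacuous. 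With that sub-case handled, every surviving case yields an admissible $(y_p,v_p)$ with $w_p=y_pv_p$ and $w_p\neq0\Rightarrow y_p\neq0$, and Assumption 5 forces all $w_p=0$ as you intend. The closing deduction from Theorem \ref{thm:necessary_cond} and Remark \ref{remark:A-stationary}(ii) is exactly the paper's.
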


{In the following, we use two examples to further illustrate Assumption 3 and the computation of $\{\widehat{\alpha^k_p}\}_{k \in \N}$ in Assumption 2.

\gap

\noindent\textbf{Example 4.1} (icc constraints). 
Let $f_p$ be {real-valued} and icc associated with $\overline f_p$, where $\overline f_p(\cdot,x)$ is Lipschitz continuous with modulus $L$ for any $x$. For the sequence $\{f^k_p\}_{k \in \N}$ in Example 2.1, it follows from $g^k_p(x) = \|x\|^2/(2\gamma_k)$ that Assumption 3 holds for $\ell_k = 1/\gamma_k$. To construct the quantities $\widehat{\alpha^k_p}$ in Assumption 2, we notice that
\begin{equation}\label{eq:alpha_estmate}
    \alpha^{k}_p
    \leq \sup_{x \in \R^n} \left[f^{k+1}_p(x) - f^k_p(x)\right]_+
    \,\leq\, \displaystyle \sup_{x \in \R^n} \left[f_p(x) - f^{k}_p(x)\right]_+
    \,\leq\, \frac{\gamma_k\, L^2}{2}
    \,\triangleq\, \widehat{\alpha^k_p} \qquad\forall\, {k \in \N},
\end{equation}
where the second inequality is due to $f^{k+1}_p(x) \leq f_p(x)$ for any $x$, and the last one uses the bound between the partial Moreau envelope and the original function \cite[Lemma 3]{li2022decomposition}. Thus, the sequence $\big\{\widehat{\alpha^k_p}\big\}_{k \in \N}$ satisfies $\sum_{{k^\prime}=0}^{+\infty} \widehat{\alpha^{{k^\prime}}_p} < +\infty$ if $\{\gamma_k\}$ is summable.

Alternatively, we can construct the quantities $\widehat{\alpha^k_p}$ as follows. Let the partial Moreau envelope in \eqref{eq:PME} be the function $\widetilde{f_p}$ jointly defined for $(x, \gamma) \in \R^n \times (0, 1]$, and $\widetilde{f_p}(x, 0) = f_p(x)$ for any $x$. 
{We claim that $\widetilde{f_p}(x, \cdot)$ is continuous on $[0,1]$ and differentiable on $(0,1)$ for any fixed $x$. Continuity in $\gamma$ can be simply checked by {a standard argument \cite[Theorem 1.17(c)]{rockafellar2009variational}}, noting that the optimal value is achieved at a unique point as the function $\overline f_p(\cdot, x) + \|\cdot-x\|^2/(2\gamma)$ is strongly convex for any fixed $x$. Differentiability follows from the Danskin's Theorem \cite[Theorem 2.1]{clarke1975generalized} that $\nabla_\gamma \widetilde{f_p} (x, \gamma) = -\|z-x\|^2 / (2\gamma^2)$ with $z$ satisfying $(x-z) / \gamma \in \partial_1 \bar f(z, x)$ for any $(x, \gamma) \in \R^n \times (0,1]$.
It then follows from the Lipschitz continuity of $\overline f_p(\cdot, x)$ that $\big|\nabla_{\gamma} \widetilde{f_p}(x,\gamma)\big| \leq L^2/2 \triangleq C_p$ for any $(x, \gamma) \in \R^n \times (0,1]$.} 
Therefore, $\alpha^k_p \leq C_p (\gamma_k - \gamma_{k+1}) \triangleq \widehat{\alpha^k_p}$ and $\sum^{+\infty}_{k^\prime=0} \widehat{\alpha^{k^\prime}_p} = C_p \gamma_0 < +\infty$ for any sequence $\{\widetilde{f_p}(\cdot, \gamma_k)\}_{k \in \N}$ defined by the partial Moreau envelope with $\gamma_k \downarrow 0$. 

\gap\gap

\noindent\textbf{Example 4.2} (VaR constraints for log-normal distributions). 
Consider $f_p(x) = \mbox{VaR}_{\alpha}[\, c(x, Z)]$ with  $c(x, Z) = \exp(\,x^\top Z)$ for some random vector $ Z \sim \mbox{Normal}(\mu, \Sigma)$, where $\Sigma$ is a positive definite covariance matrix. We have $c(x,Z) \sim \mbox{Lognormal}\big(x^\top \mu, \sqrt{x^\top \Sigma x}\big)$. The variable $x$ is restricted to a compact set $X \subset \R^n$. Denote the $\alpha$-quantile of the standard normal distribution by $q_\alpha$ and the cumulative distribution function of the standard normal distribution by $\Phi(\cdot)$. By direct calculation (cf. \cite[Section 3.2]{norton2021calculating}), we have
\[
    \mbox{VaR}_\alpha[\,c(x,Z)] = \exp\left({x^\top \mu + \sqrt{x^\top \Sigma \,x}\, q_{\alpha}}\right), \; 
    \mbox{CVaR}_\alpha [\,c(x,Z)]
    = \exp\left(x^\top \mu + \frac{x^\top \Sigma x}{2}\right)  \frac{\Phi\big(\sqrt{x^\top \Sigma x} - q_\alpha\big)}{1-\alpha}.
\]
Hence, $f_p(x) = \mbox{VaR}_{\alpha}[\, c(x, Z)]$ is neither convex nor concave if $q_{\alpha} < 0$.
For the sequence $\{f^k_p\}_{k \in \N}$ in Example 2.2, we can derive that
\[
    h^k_p(x) = k(1-\alpha) \, \mbox{CVaR}_\alpha [\,c(x,Z)]
    = k \, \exp\left(x^\top \mu + x^\top \Sigma x/2\right) \, \Phi\big(\sqrt{x^\top \Sigma x} - q_\alpha\big).
\]
Since $\Sigma$ is positive definite, it is easy to see that $h^k_p$ is twice continuously differentiable. Consequently, $h^k_p$ is $\ell_k$-smooth relative to the compact set $X$ for some $\ell_k$, and Assumption 3 holds (relative to $X$). 
Next, we define $\widetilde{f_p}(x, \gamma) = \frac{1}{\gamma}\int^{\alpha}_{\alpha - \gamma} \mbox{VaR}_t [\, c(x,Z)] \,\mbox{d}t$ for any $(x, \gamma) \in \R^n \times (0, \frac{\alpha}{2}]$ and $\widetilde{f_p}(x, 0) = f_p(x)$ for any $x$.
Obviously, $\widetilde{f_p}(x, \cdot)$ is continuous on $[0, \frac{\alpha}{2}]$ and differentiable on $(0, \frac{\alpha}{2})$ for any fixed $x$. By {using the Leibniz rule for differentiating the parametric integral}, for $\gamma \in (0, \frac{\alpha}{2})$, we have
\[
\begin{array}{rl}
    \left|\nabla_\gamma \widetilde{f_p}(x, \gamma)\right|
    =& \displaystyle\frac{1}{\gamma^2} \int^{\alpha}_{\alpha - \gamma} (\mbox{VaR}_t[\,c(x,Z)] - \mbox{VaR}_{\alpha-\gamma}[\,c(x,Z)]) \, \mbox{d}t \\[0.13in]
    \leq& \displaystyle\frac{1}{\gamma}\left(\mbox{VaR}_{\alpha}[\,c(x,Z)] - \mbox{VaR}_{\alpha-\gamma}[\,c(x,Z)]\right) \\[0.1in]
    = & \displaystyle \exp({x^\top \mu}) \;
    \frac{\exp\big({\sqrt{x^\top\Sigma x}\, q_\alpha}\big) - \exp\big({\sqrt{x^\top\Sigma x}\, q_{\alpha-\gamma}}\big)}{\gamma} \\[0.13in]
    =& \exp(x^\top \mu) \cdot \left[\exp\big(\sqrt{x^\top \Sigma x}\, q_{\alpha^\prime}\big) \sqrt{x^\top \Sigma x} \; \nabla_\alpha q_{\alpha^\prime}\right]
\end{array}
\]
for some $\alpha^\prime \in (\alpha - \gamma , \alpha)$ by the mean-value theorem. By using the fact $\nabla_\alpha q_\alpha = \sqrt{2\pi} \, \exp(q_\alpha^2 / 2)$, the monotonicity $q_{\alpha/2} < q_{\alpha^\prime} < q_{\alpha}$, and the compactness of $X$, we further have
\[
    {\sup_{x \in X}} \left|\nabla_\gamma \widetilde{f_p}(x, \gamma)\right|
    \leq \exp\left(\frac{\max\{q_\alpha^2, q_{\,\alpha/2}^2\}}{2}\right) \sup_{x \in X} \left\{\exp\big({x^\top \mu + \sqrt{x^\top \Sigma x}\, q_{\alpha}} \big)\sqrt{(2\pi) \, x^\top \Sigma x} \right\}
    \,\triangleq\, C_p < +\infty.
\]
Therefore, $\alpha^k_p \leq C_p (\gamma_k - \gamma_{k+1}) \triangleq \widehat{\alpha^k_p}$ and $\sum^\infty_{k^\prime=0} \widehat{\alpha^k_p} = C_p \gamma_0 < +\infty$ for any sequence $\{\widetilde{f_p}(\cdot, \gamma_k)\}_{k \in \N}$ with $\gamma_k \downarrow 0$.
}

\subsection{The algorithmic framework and convergence analysis.}

We now formalize the algorithm for solving \eqref{eq:cvx_composite_constraints}. {For $p=m_1+1,\cdots,m$, recall the nonnegative sequences $\big\{\widehat{\alpha^k_p}\big\}_{k \in \N}$ introduced in Assumption 2, and observe that $\sum^{+\infty}_{k^\prime=k} \widehat{\alpha^{k^\prime}_p} \rightarrow 0$ as $k \to +\infty$. For consistency of our notation, we also set $\widehat{\alpha^k_p} \equiv 0$ for all ${k \in \N}$ and $p=1,\cdots,m_1$.} At the $k$-th outer iteration and for $p=1, \cdots, m$, consider the upper and lower approximation of $f^k_p$ at a point $y$ by taking some $a^k_p \in \partial h^k_p(y)$, $b^k_p \in \partial g^k_p(y)$ and incorporating sequences $\big\{\widehat{\alpha^k_p}\big\}_{k \in \N}$:
\vspace{-0.12in}
\begin{equation}\label{eq:majorization}
\begin{array}{rll}
    f^{k,\text{upper}}_p(x; y) \triangleq& g^k_p(x) - h^k_p(y) - (a^k_p)^\top (x - y) + \sum\limits^{+\infty}_{k^\prime=k} \widehat{\alpha^{k^\prime}_p},\\[0,1in]
    f^{k,\text{lower}}_p(x; y) \triangleq& g^k_p(y) + (b^k_p)^\top (x - y) - h^k_p(x).
\end{array}
\end{equation}
{
Observe that, for fixed $y$, the upper approximation $f^{k,\text{upper}}_p(\cdot\,; y)$ is convex while the lower approximation $f^{k,\text{lower}}_p(\cdot\,; y)$ is concave. For $p=1, \cdots, m_1$, consider the following function
\begin{equation}\label{eq:majorization idea}
    \widehat{F^k_p}(x;y) \triangleq \varphi_p^{\uparrow}\left( f^{k,\text{upper}}_p(x; y) \right)
    + \varphi_p^{\downarrow}\left( f^{k,\text{lower}}_p(x; y) \right),
\end{equation}
{which is a convex majorization of $F^k_p$ at a point $y$ by the fact that $\varphi^\uparrow_p$ is nondecreasing and $\varphi^\downarrow_p$ is nonincreasing.}
For $p=m_1 + 1, \cdots, m$, consider the convex constraint $f^{k,\text{upper}}_p(x;y) \leq 0$ as an approximation for $f^k_p(x) \leq 0$.

We summarize the properties of all the surrogate functions as follows. Note that \eqref{eq:f_upper} and \eqref{eq:f_lower} hold for $p=1,\cdots,m$, while \eqref{eq:F_hat} holds only for $p=1,\cdots,m_1$.
\vspace{-0.1in}
\begin{subequations}
\begin{align}
        f^{k, \text{upper}}_p(x; y) \geq f^k_p(x) + \sum^\infty_{k^\prime=k} \widehat{\alpha^{k^\prime}_p} &\geq f^k_p(x),
        \qquad
        f^{k, \text{upper}}_p(x; x) = f^k_p(x) + \sum^\infty_{k^\prime=k} \widehat{\alpha^{k^\prime}_p}, \label{eq:f_upper}\\
        f^{k, \text{lower}}_p(x; y) &\leq f^k_p(x), 
        \qquad
        f^{k, \text{lower}}_p(x; x) = f^k_p(x), \label{eq:f_lower}\\[0.08in]
        \widehat{F^k_p}(x; y) &\geq F^k_p(x),
        \qquad
        \widehat{F^k_p} (x; x) = F^k_p(x). \label{eq:F_hat}
\end{align}
\end{subequations} 
}

The proposed method for solving problem \eqref{eq:cvx_composite_constraints} is outlined in Algorithm \ref{alg:double-loop}. {The inner loop of the algorithm (indexed by $i$) is terminated when the following conditions are satisfied:
\begin{equation}\label{eq:inner_stopping}
    \left\{\begin{array}{rll}
        f^{k,\text{upper}}_p (x^{k,i+1}; x^{k,i}) &\leq f^k_p(x^{k,i+1}) + \sum\limits^{+\infty}_{k^\prime=k} \widehat{\alpha^{k^\prime}_p} + \epsilon_k, &\quad p=1, \cdots, m,\\[0.05in]
        f^{k,\text{lower}}_p (x^{k,i+1}; x^{k,i}) &\geq f^k_p(x^{k,i+1}) - \epsilon_k, &\quad p \in I_2,\\[0.05in]
        \| x^{k,i+1} - x^{k,i} \| &\leq \delta_k/(\lambda + \ell_k),
    \end{array}\right.
\end{equation}
}


\begin{algorithm}[h]
\caption{The prox-ADC method for solving \eqref{eq:cvx_composite_constraints}}
\label{alg:double-loop}
\textbf{Input}: Given $x^0$ and $\big\{\widehat{\alpha^k_p}\big\}_{k \in \N}$ satisfying Assumption 2. Let $\{\ell_k\}$ be a sequence satisfying Assumption 3. Choose $\lambda > 0$, a positive sequence $(\epsilon_k, \delta_k) \downarrow 0$ such that $\delta_k / (\lambda + \ell_k) \downarrow 0$. Set $k=0$.

\begin{algorithmic}[1]
\While{a prescribed stopping criterion is {not met}}
    \State $x^{k,0} = x^k$
    \For{$i=0, 1, \cdots$}
        \State Take $a^{k,i}_p \in \partial\, g^k_p(x^{k, i})$ for $p=1, \cdots, m$ and $b^{k,i}_p \in \partial h^k_p(x^{k, i})$ for $p=1, \cdots, m_1$
        \State Solve the strongly convex subproblem:
        \begin{equation}\label{eq:subproblem}
            \hskip -0.15in
            x^{k,i+1} =
            \left[\begin{array}{cl}
            \displaystyle\operatornamewithlimits{argmin}_{x \in \R^n}&\;\; \sum\limits_{p=1}^{m_1} \widehat{F^k_p}(x;x^{k,i}) + \frac{\lambda}{2}\|x - x^{k,i}\|^2 \\[0.15in]
            \text{subject to}&\;\; f^{k,\text{upper}}_p(x; x^{k,i}) \leq 0,\, p=m_1 + 1, \cdots, m
            \end{array}\right]
        \end{equation}
        \If{the conditions \eqref{eq:inner_stopping} hold {for $\lambda, \ell_k, \epsilon_k, \delta_k$, and $\sum^{+\infty}_{k^\prime=k}\widehat{\alpha^{k^\prime}_p}$}}
            \State Break the for-loop
        \Else
            \State $i \leftarrow i+1$
        \EndIf
    \EndFor
    \State $x^{k+1} = x^{k,i}$
    \State $k \leftarrow k + 1$
\EndWhile
\end{algorithmic}
\end{algorithm}

\begin{figure}[h]
	\subfigure[\, \footnotesize $F_1 = \varphi_1 \circ f_1$ for a convex $\varphi_1$ and a smooth $f_1$.]{
	\begin{minipage}[t]{0.45\linewidth}
	\centering
	\includegraphics[scale=0.16]{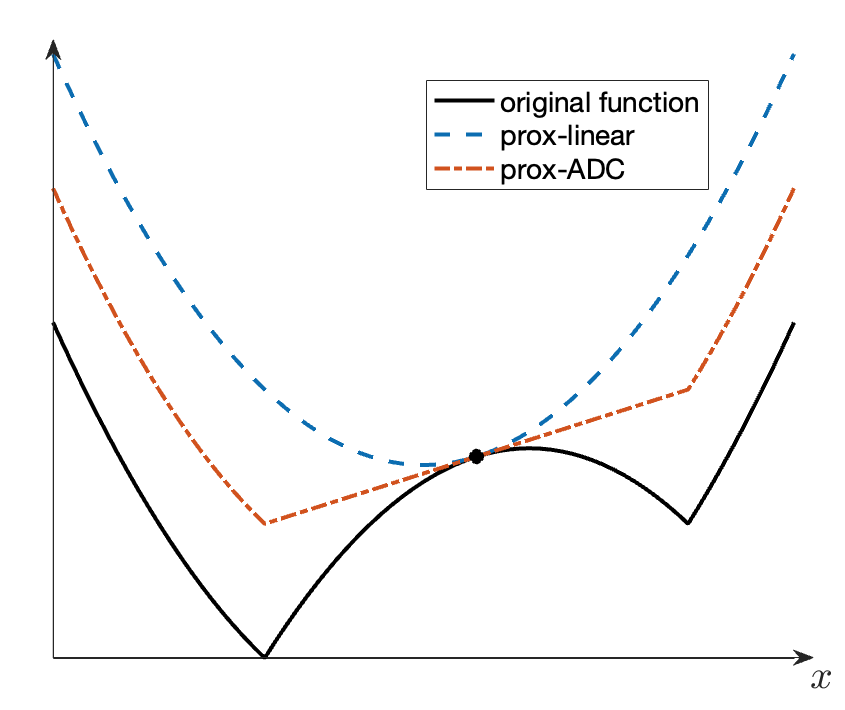}
	\end{minipage}}
	\subfigure[\, \footnotesize $F_1 = \varphi_1 \circ f_1$ for a convex nondecreasing $\varphi_1$ and a lsc $f_1$.]{
    \begin{minipage}[t]{0.52\linewidth}
	\centering
	\includegraphics[scale=0.17]{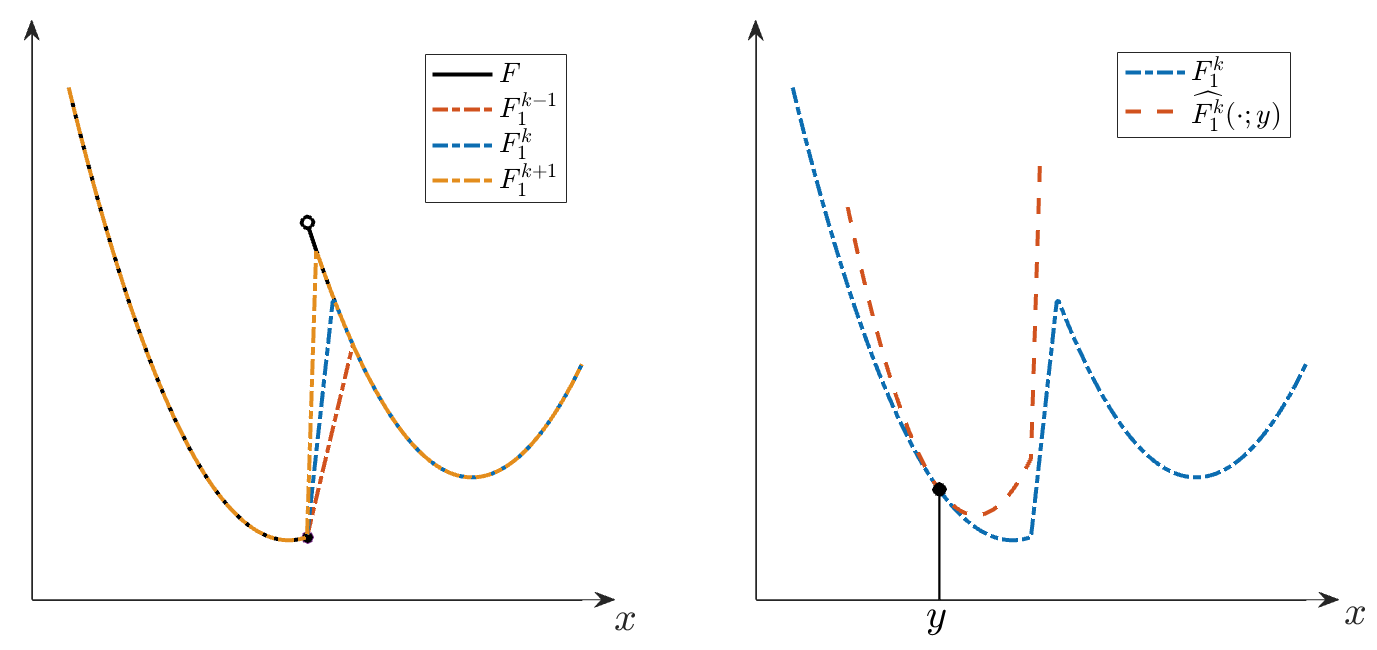}
	\end{minipage}}
	\centering
	\caption{\footnotesize Illustrations of the prox-ADC method. (a): a comparison of the prox-ADC and the prox-linear method for minimizing an amenable function. (b): asymptotic approximations of a discontinuous composite function $F_1 =\varphi_1 \circ f_1$ that are constructed by an epi-convergent sequence $\{F^k_1 = \varphi_1 \circ f^k_1\}$, and a convex majorization {$\widehat{F^k_1}(\cdot\,; y)$} for $F^k_1$.}
\label{fig:illustrations}
\end{figure}

In contrast to the prox-linear algorithm that is designed to minimize amenable functions and adopts complete linearization of the inner maps, the prox-ADC method retains more curvature information inherent in these maps (see Figure \ref{fig:illustrations}). 
We emphasize that the prox-ADC method differs from \cite[Algorithm 7.1.2]{cui2021modern} that is designed for solving a problem with a convex composite DC objective and DC constraints. Central to the prox-ADC method is the double-loop structure, where, in contrast to \cite[Algorithm 7.1.2]{cui2021modern}, the DC sequence $f^k_p$ is dynamically updated in the outer loop rather than remaining the same. This adaptation necessitates specialized termination criteria \eqref{eq:inner_stopping} and the incorporation of $\widehat{\alpha^k_p}$ to maintain feasibility with each update of $f^k_p$. 
In the following, we demonstrate the well-definedness of the prox-ADC method. Specifically, we establish that for each iteration $k$, the criteria detailed in \eqref{eq:inner_stopping} are attainable {in finitely many steps}.
\begin{theorem}[convergence of the inner loop]
\label{thm:converge1}
    Suppose that Assumptions 1-4 hold. 
    Then the following statements hold.\\
    (a) Problem \eqref{eq:subproblem} is feasible for any $k, i \in \N$.\\
    (b) The stopping rule of the inner loop is achievable in {finitely many steps}, i.e., the smallest integer $i$ satisfying conditions \eqref{eq:inner_stopping}, denoted by $i_k$, is finite for any ${k \in \N}$.
\end{theorem}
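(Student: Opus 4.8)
The plan is to handle the two parts separately: part (a) is a feasibility invariant proved by induction on the iteration pair $(k,i)$, and part (b) rests on a descent inequality for the exact objective $H^k=\sum_{p=1}^m F^k_p$ together with the level-boundedness in Assumption 4. Throughout I write $\beta^k_p\triangleq\sum_{k^\prime=k}^{+\infty}\widehat{\alpha^{k^\prime}_p}$, which is finite by Assumption 2.

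For part (a) I would prove the invariant $f^k_p(x^{k,i})+\beta^k_p\le 0$ for all $p=m_1+1,\dots,m$. The base case $x^{0,0}=x^0$ is exactly the strict-feasibility requirement of Assumption 2. Granting the invariant at $x^{k,i}$, the identity $f^{k,\mathrm{upper}}_p(x^{k,i};x^{k,i})=f^k_p(x^{k,i})+\beta^k_p$ from \eqref{eq:f_upper} shows $x^{k,i}$ is feasible for \eqref{eq:subproblem}, which already proves part (a); the constraint satisfied by the solution $x^{k,i+1}$ together with $f^{k,\mathrm{upper}}_p(x^{k,i+1};x^{k,i})\ge f^k_p(x^{k,i+1})+\beta^k_p$ then propagates the invariant to $x^{k,i+1}$. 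The one genuinely new step is the outer transition $x^{k+1}=x^{k,i_k}\mapsto x^{k+1,0}=x^{k+1}$: since $\beta^k_p\ge 0$, the invariant gives $f^k_p(x^{k+1})\le 0$, so $x^{k+1}\in X^k$, and by the definition of $\alpha^k_p$ and $\alpha^k_p\le\widehat{\alpha^k_p}$ one has $f^{k+1}_p(x^{k+1})-f^k_p(x^{k+1})\le\widehat{\alpha^k_p}$; subtracting $\widehat{\alpha^k_p}$ from the tail sum (so $\beta^{k+1}_p=\beta^k_p-\widehat{\alpha^k_p}$) restores the invariant at level $k+1$.

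For part (b) I would first note that \eqref{eq:subproblem} has a unique solution, being a strongly convex program that is feasible by part (a). The engine is the descent estimate
\[
   H^k(x^{k,i+1})+\frac{\lambda}{2}\|x^{k,i+1}-x^{k,i}\|^2\;\le\; H^k(x^{k,i}),
\]
obtained by comparing the subproblem value at the minimizer $x^{k,i+1}$ with its value at the feasible point $x^{k,i}$, invoking the majorization sandwich \eqref{eq:F_hat} ($F^k_p\le\widehat{F^k_p}(\cdot\,;x^{k,i})$ with equality at $x^{k,i}$), and using that the invariant makes the indicator terms of $H^k$ vanish along the iterates. Because every $x^{k,i}$ lies in $X^k$ and the descent bounds $\sum_{p\le m_1}\varphi_p(f^k_p(x^{k,i}))$ by $H^k(x^k)$, Assumption 4 confines $\{x^{k,i}\}_i$ to a fixed bounded set and shows $H^k$ is bounded below; telescoping then yields $\sum_i\|x^{k,i+1}-x^{k,i}\|^2<+\infty$, hence $\|x^{k,i+1}-x^{k,i}\|\to 0$ as $i\to+\infty$. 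This is the third inequality in \eqref{eq:inner_stopping} for all large $i$. For the first two inequalities, a direct computation rewrites each as a Bregman-type gap: the first is $h^k_p(x^{k,i+1})-h^k_p(x^{k,i})-(a^k_p)^\top(x^{k,i+1}-x^{k,i})\le\epsilon_k$ and the second (for $p\in I_2$) is the analogous gap for $g^k_p$. I would bound these by the elementary inequality $\psi(x)-\psi(y)-a^\top(x-y)\le\Hausd\big(\partial\psi(x),\partial\psi(y)\big)\,\|x-y\|$ for convex $\psi$ and $a\in\partial\psi(y)$, proved by choosing $b\in\partial\psi(x)$ nearest to $a$ so that $\psi(x)-\psi(y)\le b^\top(x-y)$. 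Since the iterates stay in a bounded set, the subdifferentials of the real-valued convex $g^k_p,h^k_p$ are uniformly bounded there, so the Hausdorff factors are bounded and both gaps are $O(\|x^{k,i+1}-x^{k,i}\|)\to 0$; Assumption 3 sharpens this to an $O(\|x^{k,i+1}-x^{k,i}\|^2)$ control for whichever of $g^k_p,h^k_p$ attains the minimum, matching the threshold $\delta_k/(\lambda+\ell_k)$. Hence all three conditions hold together for large $i$, giving $i_k<+\infty$.

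I expect the main obstacle to be the bookkeeping in part (a): tracking the tail sums $\beta^k_p$ across the outer transition so that the per-step budget $\widehat{\alpha^k_p}$ exactly absorbs the worst-case increase $\alpha^k_p$ of $f^k_p$ over $X^k$ while preserving feasibility. Once that invariant is in place, the descent inequality and the Bregman bound are routine; the only care needed there is to use Assumption 4 to trap the iterates — and therefore all subgradients $a^k_p,b^k_p$ and the Hausdorff distances — inside a common bounded set for the fixed outer index $k$.
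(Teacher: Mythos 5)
Your proposal is correct and follows essentially the same route as the paper's proof: an induction that propagates strict feasibility across both inner and outer iterations via the tail budgets $\sum_{k'\ge k}\widehat{\alpha^{k'}_p}$, followed by the descent inequality $H^k(x^{k,i+1})+\tfrac{\lambda}{2}\|x^{k,i+1}-x^{k,i}\|^2\le H^k(x^{k,i})$ combined with Assumption 4 to force $\|x^{k,i+1}-x^{k,i}\|\to 0$ and drive the two Bregman-type gaps (which the paper bounds by uniform continuity of $h^k_p$ and boundedness of the subgradients on a compact sublevel set, equivalent to your Hausdorff-distance estimate) below $\epsilon_k$. The only inessential difference is your side remark that Assumption 3 yields an $O(\|x^{k,i+1}-x^{k,i}\|^2)$ bound — it only controls one of $g^k_p,h^k_p$, not necessarily the one appearing in a given gap — but the $O(\|x^{k,i+1}-x^{k,i}\|)$ bound already suffices, so nothing is lost.
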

\begin{proof}
We prove (a) and (b) by induction. For $k=0$, notice from Assumption 2 {and \eqref{eq:f_upper}} that $f^{0,\text{upper}}_p(x^0; x^{0}) = f^0_p(x^0) + \sum^{+\infty}_{{k^\prime}=0} \widehat{\alpha^{{k^\prime}}_p} \leq 0$ for $p=m_1 + 1, \cdots, m$. Thus, problem \eqref{eq:subproblem} is feasible for $k=i=0$. Assume that \eqref{eq:subproblem} is feasible for $k=0$ and some $i = \bar i ~({\in \N})$. Consequently, $x^{0, \bar i+1}$ is well-defined and for $p=m_1 + 1, \cdots, m$,
\[
    f^{0,\text{upper}}_p(x^{0, \bar i+1}; x^{{0}, \bar i+1}) 
    \overset{\eqref{eq:f_upper}}{=} f^0_p(x^{0, \bar i+1}) + \sum\limits^{+\infty}_{{k^\prime}=0}\widehat{\alpha^{{k^\prime}}_p} \;
    \overset{\eqref{eq:f_upper}}{\leq}\; f^{0,\text{upper}}_p(x^{0, \bar i+1}; x^{0, \bar i})
    \leq 0,
\]
which yields the feasibility of \eqref{eq:subproblem} for $k=0$, $i = \bar i+1$. Hence, by induction, problem \eqref{eq:subproblem} is feasible for $k = 0$ and any $i \in \N$. 
To proceed, recall the function $H^k$ defined in Assumption 4. From the update of $x^{0,i+1}$, we have
\begin{equation}\label{eq:descent}
    H^0 (x^{0,i+1}) = \sum_{p=1}^{m_1} F^0_p(x^{0, i+1})\,
    {\overset{\eqref{eq:F_hat}}{\leq}}\, \sum_{p=1}^{m_1} \widehat{F^0_p}(x^{0, i+1}; x^{0, i})
    \leq H^0(x^{0,i}) - \frac{\lambda}{2}\|x^{0,i+1} - x^{0,i}\|^2 \quad\forall\, i {\in \N}.
\end{equation}
{The last inequality follows from the definition of $x^{0, i+1}$ and the second relation in \eqref{eq:F_hat} that $\widehat{F^0_p} (x^{0, i}; x^{0, i})= F^0_p (x^{0, i})$ for $p=1,\cdots,m_1$.} Observe that $H^0$ is bounded from below by the continuity of $F^0_p=\varphi_p \circ f^0_p$ for $p=1, \cdots, m_1$ {(see the discussion following  model \eqref{eq:cvx_composite_constraints})} and the level-boundedness of $H^0$. 
{Suppose for contradiction that the stopping rule of the inner loop is not achievable in {finitely many steps}.} 
Then from \eqref{eq:descent}, $\left\{H^0(x^{0,i})\right\}$ converges and $\sum_{i=0}^\infty \|x^{0,i+1} - x^{0,i}\|^2 < +\infty$. The latter further yields $\|x^{0,i+1} - x^{0,i}\| \rightarrow 0$ and thus the last condition in \eqref{eq:inner_stopping} is achievable in {finitely many iterations}. Next, to derive a contradiction, it suffices to prove that the first two conditions in \eqref{eq:inner_stopping} can also be achieved in {finitely many} steps. We only show the first one since the other can be done with similar arguments. {By the level-boundedness of $H^0$, the set $S^0 \triangleq \{ x \mid H^0(x) \leq H^0(x^{0,0})\}$ is compact. Notice that $x^{0,i} \in S^0$ for all $i \in \N$ due to \eqref{eq:descent}.} For $p=1, \cdots, m$, we then have
\[
    0 \leq \;f^{0, \text{upper}}_p (x^{0, i+1}; x^{0, i}) - f^0_p(x^{0,i+1}) - \sum^{+\infty}_{{k^\prime}=0} \widehat{\alpha^{{k^\prime}}_p}
    \;=\; h^0_p(x^{0, i+1}) - h^0_p(x^{0, i}) - (a^{0,i}_p)^\top (x^{0, i+1} - x^{0, i})
    \;\longrightarrow\; 0,
\]
because $h^0_p$ is uniformly continuous on the compact set $S^0$ and $\{a^{0,i}_p\}_{i \in \N} \subset \bigcup \left\{\partial h^0_p(x) \mid x \in S^0\right\}$ is bounded by \cite[Theorem 24.7]{rockafellar1970convex}. Therefore, for a fixed $\epsilon_0 > 0$, there exists some $i_0$ such that $f^{0,\text{upper}}_p (x^{0,i_0+1}; x^{0,i_0}) \leq f^0_p(x^{0,i_0+1}) + \sum^{+\infty}_{{k^\prime}=0} \widehat{\alpha^{{k^\prime}}_p} + \epsilon_0$ holds for $p=1, \cdots, m$. Thus, (a)-(b) hold for $k=0$.

Now assume that (a)-(b) hold for some $k = \bar k ~({\in \N})$ and, hence $i_{\bar k}$ is finite. 
It then follows from $x^{\bar k+1, 0} = {x^{\bar k, i_{\bar k}}} \in X^{\bar k}$ and $f^{\bar k,\text{upper}}_p({x^{\bar k, i_{\bar k}}}; x^{\bar k,i_{\bar k}}) \leq 0$ that for each {$p = m_1 + 1, \cdots, m$},
\[
\begin{array}{rl}
    &f^{\bar k+1, \text{upper}}_p(x^{\bar k+1, 0}; x^{\bar k+1, 0})
    {\overset{\eqref{eq:f_upper}}{=}} f^{\bar k+1}_p(x^{\bar k+1, 0}) + \sum\limits^{+\infty}_{{k^\prime}=\bar k+1} \widehat{\alpha^{{k^\prime}}_p}\\[0.15in]
    \leq& f^{\bar k}_p(x^{\bar k+1, 0}) + \sup\limits_{x \in X^{\bar k}} \left[ f^{\bar k+1}_p(x) - f^{\bar k}_p(x) \right]_+ + \sum\limits^{+\infty}_{{k^\prime}=\bar k+1} \widehat{\alpha^{{k^\prime}}_p} \\[0.18in]
    \leq& f^{\bar k}_p(x^{\bar k+1, 0}) + \sum\limits^{+\infty}_{{k^\prime}=\bar k} \widehat{\alpha^{{k^\prime}}_p}
    \;\;{\overset{\eqref{eq:f_upper}}{=}}\;\; f^{\bar k,\text{upper}}_p(x^{\bar k+1, 0}; x^{\bar k,i_{\bar k}})
    \leq 0.
\end{array}
\]
Thus, problem \eqref{eq:subproblem} is feasible for $k = \bar k + 1$ and any ${i \in \N}$. Building upon this, we can now clearly see the validity of (b) for $k=\bar k+1$, as we have shown similar results earlier in the case of $k=0$. By induction, we complete the proof of (a)-(b). 
\end{proof}

\noindent For any $k \in \N$, define the set of multipliers for problem \eqref{eq:subproblem} as
\[
    Y^k( x^{k+1} ) \triangleq
    \left\{
    \left(\begin{array}{c}
    y^k_{1,1} \\
    y^k_{1,2} \\
    \vdots \\
    y^k_{m,1} \\
    y^k_{m,2}
    \end{array}\right)
    \middle|
    \begin{array}{rl}
    &0 \in \sum\limits_{p=1}^{m} \Big[y^k_{p,1} \, \partial f^{k,\text{upper}}_p(x^{k, i_k + 1};x^{k, i_k}) + y^k_{p,2} \, \partial f^{k,\text{lower}}_p(x^{k, i_k + 1}; x^{k, i_k}) \Big] \\
    &\qquad + \lambda(x^{k, i_k + 1} - x^{k, i_k}), \\[0.05in]
    &y^k_{p,1} \in \partial\varphi^{\uparrow}_p(f_p^{k,\text{upper}}(x^{k, i_k + 1}; x^{k, i_k})), \, p = 1, \cdots, m, \\[0.05in]
    &y^k_{p,2} \in \partial\varphi^{\downarrow}_p(f_p^{k,\text{lower}}(x^{k, i_k + 1}; x^{k, i_k})), \, p = 1, \cdots, m.
    \end{array}\right\}.
\]
Here $x^{k, i_k + 1}$ is uniquely determined by $x^{k+1} = x^{k, i_k}$ as the minimizer of a strongly convex problem \eqref{eq:subproblem}. 
Notice that $y^k_{p,2}=0$ for $p \in I_1$ since $\varphi_p$ is nondecreasing and $\varphi^\downarrow_p = 0$ for $p \in I_1$.
Let $\{x^{k+1}\}_{k \in N}$ be a subsequence that converges to some point $\bar x$. As we will see in the following lemma, the asymptotic constraint qualification in Assumption 5 implies the non-emptiness and the compactness of $Y^k(x^{k+1})$ for all sufficiently large $k \in N$ and the eventual boundedness of $\{Y^k(x^{k+1})\}_{k \in N}$. These technical results play an important role in the convergence analysis of the prox-ADC method. However, a stronger property of equi-boundedness appears necessary for designing practical termination criteria for the algorithm. We will establish this strengthened property   in section \ref{subsec:terminate} under non-asymptotic constraint qualifications.

\begin{lemma}[non-emptiness and eventual boundedness of multipliers]
\label{lem:CQ_asymptotic}
    Let $\bar x \in \bigcap^m_{p=1} \dom F_p$ be a feasible point of problem \eqref{eq:cvx_composite_constraints}. Suppose that Assumptions 1-5 hold. Consider any sequence $\{x^{k}\}$ generated by the prox-ADC method, with a subsequence $\{x^{k+1}\}_{k \in N}$ converging to $\bar x$. The following statements hold.\\
    (a) The set of multipliers $Y^k( x^{k+1} )$ is non-empty and compact for all sufficiently large $k \in N$.\\
    (b) Additionally, if $\partial^\infty_A f_p(\bar x) = \{0\}$ for $p \in I_2$ (with the definition of $I_2$ in \eqref{defn: index for varphi_p}), then the sequence $\big\{Y^k( x^{k + 1} )\big\}_{k \in N}$ is eventually bounded.
\end{lemma}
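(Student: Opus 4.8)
The plan is to prove both parts by contradiction, in each case extracting a limiting configuration that violates the asymptotic constraint qualification of Assumption 5. Write $x^+_k := x^{k,i_k+1}$ for the subproblem solution at the $k$-th outer iteration, and recall $x^{k+1} = x^{k,i_k}$. The stopping rule \eqref{eq:inner_stopping} gives $\|x^+_k - x^{k+1}\| \le \delta_k/(\lambda+\ell_k) \to 0$, so $x^+_k \to_N \bar x$ as well; moreover $\ell_k\|x^+_k - x^{k+1}\| \le \delta_k \to 0$, so by Assumption 3, for each $p$ one of $\partial g^k_p$ or $\partial h^k_p$ differs between $x^{k+1}$ and $x^+_k$ by Hausdorff distance at most $\delta_k$. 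This is the device I would use repeatedly to move a subgradient evaluated at $x^{k+1}$ to one at $x^+_k$ (or vice versa) up to an $o(1)$ error, so that every difference $\partial g^k_p(\cdot) - \partial h^k_p(\cdot)$ occurring in the multiplier inclusion is, up to $o(1)$, evaluated at a common point converging to $\bar x$; its bounded limits then lie in $\partial_A f_p(\bar x)$ and its vanishing-multiplier (scaled) limits in $\partial^\infty_A f_p(\bar x)$. I would also record that the stopping rule together with Assumption 1(b) yields $f^{k,\text{upper}}_p(x^+_k; x^{k+1}) \to_N t_p \in T_p(\bar x)$ along a further subsequence, and that complementary slackness forces $t_p = 0$ — hence the associated limiting multiplier lies in $\mathcal{N}_{\dom\varphi_p}(t_p)$ — for any constraint index $p > m_1$ that stays active. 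Finally, since $\varphi_p$ is real-valued for $p \le m_1$, the objective multipliers $y^k_{p,1}$ and $y^k_{p,2}$ are bounded for those $p$, so only the constraint multipliers $y^k_{p,1}$ with $p > m_1$ can be unbounded.

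For part (a), the subproblem \eqref{eq:subproblem} is convex with the unique solution $x^+_k$, and since the objective is finite-valued, the existence and compactness of $Y^k(x^{k+1})$ reduce to the basic constraint qualification for the convex constraints $f^{k,\text{upper}}_p(\cdot; x^{k+1}) \le 0$, $p > m_1$, at $x^+_k$. I would show this qualification holds for all large $k \in N$: if it failed along an infinite subset, there would be nonnegative scalars $\{\mu^k_p\}_{p>m_1}$, not all zero, with $\sum_{p>m_1}\mu^k_p u^k_p = 0$ for selections $u^k_p \in \partial f^{k,\text{upper}}_p(x^+_k; x^{k+1})$ vanishing on inactive constraints. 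Normalizing these scalars and, if the $u^k_p$ blow up, rescaling once more, I would pass to the limit with the two devices above to get $0 = \sum_{p>m_1} y_p v_p$ with each $(y_p, v_p)$ in the set prescribed by \eqref{eq:ACQ} and not all $y_p$ zero, contradicting Assumption 5. Granting the qualification, the convex sum and chain rules give a nonempty multiplier set; boundedness of the constraint multipliers follows from the same qualification at the fixed point $x^+_k$, the objective multipliers are bounded, and closedness follows from outer semicontinuity of the subdifferentials — so $Y^k(x^{k+1})$ is nonempty and compact.

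For part (b), suppose $\{Y^k(x^{k+1})\}_{k\in N}$ is not eventually bounded; pick $y^k \in Y^k(x^{k+1})$ with $\|y^k\| \to \infty$ and set $\hat y^k := y^k/\|y^k\| \to_N \hat y \ne 0$ along a subsequence. Dividing the defining inclusion of $Y^k$ by $\|y^k\|$ sends the proximal term $\lambda(x^+_k - x^{k+1})/\|y^k\|$ to $0$. Because the objective multipliers are bounded for $p \le m_1$, their normalized versions vanish, so $\hat y$ is supported on indices $p > m_1$ and there is $p_0 > m_1$ with $\hat y_{p_0,1} > 0$. For $p \in I_2$ the hypothesis $\partial^\infty_A f_p(\bar x) = \{0\}$ forces both normalized products $\hat y^k_{p,1}u^k_p$ and $\hat y^k_{p,2}w^k_p$ to converge to $0$ whether or not the subgradients blow up; for $p \in I_1$ I would classify the limiting term as either $y_p v_p$ with $y_p \in \mathcal{N}_{\dom\varphi_p}(t_p)$ and $v_p \in \Conv\partial_A f_p(\bar x)$ (bounded subgradient, active constraint) or an element of $\pm\partial^\infty_A f_p(\bar x)$ (blow-up against a vanishing multiplier). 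Passing to the limit gives $0 = \sum_p y_p v_p$ fitting \eqref{eq:ACQ} with $y_{p_0} > 0$, again contradicting Assumption 5; hence the sequence is eventually bounded.

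The main obstacle throughout is the absence of a uniform bound on the DC subgradients $u^k_p \in \partial g^k_p(x^+_k) - \partial h^k_p(x^{k+1})$ and $w^k_p \in \partial g^k_p(x^{k+1}) - \partial h^k_p(x^+_k)$: normalizing the multipliers alone need not tame these, so a second rescaling and careful subsequence bookkeeping are required to sort each term cleanly into its bounded part (landing in $\Conv\partial_A f_p(\bar x)$) or its horizon part (landing in $\partial^\infty_A f_p(\bar x)$). The extra hypothesis $\partial^\infty_A f_p(\bar x) = \{0\}$ for $p \in I_2$ serves precisely to exclude the one configuration \eqref{eq:ACQ} cannot absorb, namely a two-sided horizon contribution $\hat y^k_{p,1}u^k_p + \hat y^k_{p,2}w^k_p$ arising from the Minkowski-sum structure $y_{p,1}\partial_A f_p(\bar x) + y_{p,2}\partial_A f_p(\bar x)$ with opposite-sign multipliers.
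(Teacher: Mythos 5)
Your proposal is correct and follows essentially the same route as the paper's proof: reduce part (a) to showing the homogeneous multiplier system for the constraints $f^{k,\text{upper}}_p(\cdot\,;x^{k+1})\le 0$ has only the zero solution for large $k\in N$, and prove both that claim and part (b) by normalizing the multipliers, using Assumption 3 together with the inner stopping rule to transport subgradients between $x^{k,i_k}$ and $x^{k,i_k+1}$ up to an $O(\delta_k)$ error, performing a second rescaling when the DC subgradients blow up so that each term lands either in $y_p\,\Conv\partial_A f_p(\bar x)$ with $y_p$ in the appropriate normal cone or in $\pm\partial^\infty_A f_p(\bar x)\setminus\{0\}$, and contradicting Assumption 5. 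Your observation that only the constraint multipliers ($p>m_1$) can be unbounded, since real-valuedness of $\varphi_p$ for $p\le m_1$ gives local boundedness of $\partial\varphi_p^{\uparrow}$ and $\partial\varphi_p^{\downarrow}$, is exactly the paper's use of \cite[Theorem 24.7]{rockafellar1970convex} in a slightly streamlined form.
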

\begin{proof}
(a) Observe that $x^{k,i_k+1} \to_N \bar x$ because $x^{k+1} = x^{k,i_k} \to_N \bar x$ and $\|x^{k, i_k} - x^{k, i_k + 1}\| \leq \delta_k / (\lambda + \ell_k) \downarrow 0$ by conditions \eqref{eq:inner_stopping}. The non-emptiness and compactness of $Y^k( x^{k+1} )$ for all sufficiently large $k \in N$ is a direct consequence of the nonsmooth Lagrange multiplier rule \cite[Exercise 10.52]{rockafellar2009variational} for problem \eqref{eq:subproblem} if we can show that, for all {sufficiently large} $k \in N$, $y^k_{m_1 + 1} = \cdots = y^k_{m} = 0$ is the unique solution of the following system
\begin{equation}\label{eq:CQ_contradict}
    0 \in \sum\limits_{p=m_1+1}^{m} y^k_{p} \, \partial f_p^{k,\text{upper}}(x^{k, i_k + 1};x^{k, i_k}),\;\,
    y^k_{p} \in \mathcal N_{(-\infty,0]} (f_p^{k,\text{upper}}(x^{k, i_k + 1}; x^{k, i_k})), \; p=m_1 + 1, \cdots, m.
\end{equation}
Suppose that the above claim does not hold. {Then, there exists a subsequence $N^\prime \subset N$ such that $y^k_{m_1+1} = \cdots = y^k_m = 0$ is not the unique solution of \eqref{eq:CQ_contradict} for all $k \in N^\prime$.} Without loss of generality, {suppose $N^\prime = N$ and} take $\{y^k_{p}\}_{k \in N}$ for $p=m_1 + 1, \cdots, m$ satisfying \eqref{eq:CQ_contradict} and $\sum_{p=m_1+1}^{m} |y^k_{p}| = 1$. For each $p$ and $k \in N$, define
\[
    A^k_p \triangleq \left\{y^k_{p} \, v^k_{p} \,\middle|\,
    v^k_{p} \in \left\{\partial g^k_p(x^{k, i_k}) - \partial h^k_p(x^{k, i_k})\right\} \cup \left\{\partial g^k_p(x^{k, i_k + 1}) - \partial h^k_p(x^{k, i_k + 1} {)} \right\}
    \right\}.
\]
Then, for all $k \in N$, we have
\[
\begin{array}{rl}
    & \dist\left(0, \sum\limits_{p=m_1+1}^{m} A^k_p\right)\\[0.15in]
    \overset{(\rm\romannumeral1)}{\leq}& 
    \dist\left( 0, \sum\limits_{p=m_1+1}^{m}
    y^k_{p} \left[\partial g^k_p(x^{k,i_{k}+1}) - \partial h^k_p(x^{k,i_k})\right] \right)
    + \sum\limits_{p=m_1+1}^{m} \D\left(y^k_{p} \left[\partial g^k_p(x^{k,i_{k}+1}) - \partial h^k_p(x^{k,i_k})\right], A^k_p\,\right) \\[0.15in]

    {\overset{(\rm\romannumeral2)}{\leq}}& {
    \dist\left( 0, \sum\limits_{p=m_1+1}^{m}
    y^k_{p} \, \partial f_p^{k,\text{upper}}(x^{k, i_k + 1};x^{k, i_k}) \right)}\\[0.12in]
    &\qquad\quad {+ \sum\limits_{p=m_1+1}^{m} |y^k_{p}| \cdot \min \Big\{\D\left(\,\partial g^k_p(x^{k,i_{k}+1}),\, \partial g^k_p(x^{k,i_{k}})\,\right),\, \D\left(\,\partial h^k_p(x^{k,i_k}), \partial h^k_p(x^{k,i_k + 1} \right) \Big\} } \\
    
    \overset{(\rm\romannumeral3)}{\leq}&
    0 + \sum\limits_{p=m_1+1}^{m} |y^k_{p}| \cdot \min\Big\{ \Hausd(\partial g^k_p(x^{k,i_k + 1}), \partial g^k_p(x^{k,i_k})),\,
	\Hausd(\partial h^k_p(x^{k,i_k + 1}), \partial h^k_p(x^{k,i_k})) \Big\} \\[0.1in]
	 
    \overset{(\rm\romannumeral4)}{\leq}&
    \sum\limits_{p=m_1+1}^{m} |y^k_{p}| \cdot \ell_k \, \|x^{k,i_k + 1} - x^{k,i_k}\|
    \;\overset{(\rm\romannumeral5)}{\leq}\; \delta_k,
\end{array}
\]
where $(\rm\romannumeral1)$ uses the inequalities $\D(A,C) \leq \D(A,B) + \D(B, C)$ and $\D(A+B, A^\prime + B^\prime) \leq \D(A,A^\prime) + \D(B, B^\prime)$; 
the first term in $(\rm\romannumeral2)$ is because of the construction of upper convex majorization \eqref{eq:majorization}; the second term in $(\rm\romannumeral2)$ is due to $\D(A, B \cup C) \leq \min \{\D(A, B), \D(A, C)\}$ so that
\[
\begin{array}{rl}
    &\D\left(y^k_{p} \left[\,\partial g^k_p(x^{k,i_{k}+1}) - \partial h^k_p(x^{k,i_k})\right],\, A^k_p\,\right)\\[0.08in]
    = & |y^k_{p}| \cdot \D\Big(\,\partial g^k_p(x^{k,i_{k}+1}) - \partial h^k_p(x^{k,i_k}),\, \left\{\partial g^k_p(x^{k, i_k}) - \partial h^k_p(x^{k, i_k})\right\} \cup \left\{\partial g^k_p(x^{k, i_k + 1}) - \partial h^k_p(x^{k, i_k + 1})\right\} \Big)\\[0.1in]
    \leq & |y^k_{p}| \cdot \min \Big\{\D\left(\,\partial g^k_p(x^{k,i_{k}+1}),\, \partial g^k_p(x^{k,i_{k}})\,\right),\, \D\left(\,\partial h^k_p(x^{k,i_k}), \partial h^k_p(x^{k,i_k + 1} \right) \Big\}.
\end{array}
\]
Inequality $(\rm\romannumeral3)$ is due to \eqref{eq:CQ_contradict} and $\D(A, B) \leq \Hausd(A, B)$; $(\rm\romannumeral4)$ is by Assumption 3; and $(\rm\romannumeral5)$ is implied by conditions \eqref{eq:inner_stopping} and $\sum_{p=m_1+1}^{m} |y^k_{p}| = 1$. 
Equivalently, for all $k \in N$ and $p=m_1 + 1, \cdots, m$, there exist $y^k_p \in \mathcal N_{(-\infty,0]}\left(f^{k,\text{upper}}_p(x^{k, i_k + 1}; x^{k, i_k})\right)$ with $\sum_{p=m_1+1}^{m} |y^k_p| = 1$ and 
\[
    v^k_{p} \in \left\{\partial g^k_p(x^{k, i_k}) - \partial h^k_p(x^{k, i_k})\right\} \cup \left\{\partial g^k_p(x^{k, i_k + 1}) - \partial h^k_p(x^{k, i_k + 1})\right\}
\]
such that $\|\sum_{p=m_1+1}^{m} y^k_p \, v^k_{p} \| \leq \delta_k$. {For $p=m_1+1, \cdots, m$, since the subsequence $\{f^k_p (x^{k,i_k+1})\}_{k \in N}$ is bounded by Assumption 1(b), we can assume without loss of generality that $f^k_p (x^{k,i_k+1})$ converges to some $\bar z_p \in T_p(\bar x)$ as $k (\in N) \to +\infty$. Furthermore, it can be easily seen from {\eqref{eq:f_upper} and} \eqref{eq:inner_stopping} that $f^{k,\text{upper}}_p(x^{k,i_k+1}; x^{k,i_k})$ {converges} to the same limit point $\bar z_p$ for $p=m_1+1, \cdots, m$.} 
Notice that $f^{k,\text{upper}}_p(x^{k, i_k + 1}; x^{i, i_k}) \leq 0$ for all $k \in N$ and $p=m_1+1,\cdots,m$ from Theorem \ref{thm:converge1}(a) and, thus, each $\bar z_p$ must satisfy $\bar z_p \leq 0$. Suppose that $y^k_p \rightarrow_N \bar y_p$ for each $p$. Then, by the outer semicontinuity of the normal cone \cite[Proposition 6.6]{rockafellar2009variational},
\[
    \bar y_p \in \mathcal{N}_{(-\infty,0]}(\bar z_p) \subset \bigcup \big\{\mathcal{N}_{\dom\varphi_p}(t_p) \mid t_p \in T_p(\bar{x})\big\}, \quad p=m_1 + 1, \cdots, m.
\]
Obviously, $\sum_{p=m_1+1}^{m} |\bar y_p| = 1$, and the sequence $\{\bar y_p\}^m_{p=m_1+1}$ has at least one nonzero element. Consider two cases.

\gap

\noindent\underline{\textsl{Case 1.}} If $\{v^k_p\}_{k \in N}$ is bounded for $p=m_1 + 1, \cdots, m$, then there are vectors $\{\bar v_p\}^m_{p=m_1+1}$ with $\bar v_p \in \partial_A f_p(\bar x)$ such that $v^k_p \rightarrow_N \bar v_p$ and $0 = \sum_{p=m_1+1}^{m} \bar y_p \, \bar v_p \in \sum_{p=m_1+1}^{m} \bar y_p \, \partial_A f_p(\bar x)$,
contradicting Assumption 5 since $\bar y_{m_1+1}, \cdots, \bar y_{m}$ are not all zeros. 

\gap

\noindent\underline{\textsl{Case 2.}} Otherwise, there exists some $p$ such that $\{v^k_p\}_{k \in N}$ is unbounded. {D}efine the index sets
\[
    I_\text{ub} \triangleq \left\{\, p \in \{m_1+1, \cdots, m\} \,\middle|\, \{v^k_p\}_{k \in N}  \text{ unbounded}\,\right\} \,(\neq \emptyset)\quad \mbox {and}\quad
    I_\text{b} \triangleq \{m_1+1, \cdots, m\} \backslash I_\text{ub}.
\]
Notice that $\big\{\sum_{p \in I_\text{b}} y^k_p \, v^k_p\big\}_{k \in N}$ is bounded. Without loss of generality, assume that this sequence converges to some $\bar w$ and, thus, $\sum_{p \in I_\text{ub}} y^k_p \, v^k_p \rightarrow_N (-\bar w)$.

\gap

\underline{\em Step 1:} Next we prove by contradiction that, for each $p \in I_\text{ub}$, the sequence $\{y^k_p \, v^k_p\}_{k \in N}$ is bounded. Suppose that the boundedness fails and $\sum_{p \in I_\text{ub}} \|y^k_p \, v^k_p\| \rightarrow_N +\infty$ by passing to a subsequence. Consider $\widetilde w^k_p \triangleq y^k_p \, v^k_p/\sum_{p \in I_\text{ub}} \|y^k_p \, v^k_p\|$ for $p \in I_\text{ub}$. Then $\sum_{p \in I_\text{ub}} \widetilde w^k_p \rightarrow_N 0$. 
Since $\sum_{p \in I_\text{ub}} \|\widetilde w^k_p\| = 1$ for all $k \in N$, we can assume that there exist $p_1 \in I_\text{ub}$ and $\widetilde w_{p_1} \neq 0$ such that $\widetilde w^k_{p_1} \rightarrow_N \widetilde w_{p_1}$. It then follows from the construction of $\widetilde w^k_p$ that $\{\widetilde w^k_p\}_{k \in N}$ has a subsequence converging to some element of $\pm\partial^\infty_A f_p(\bar x)$ for each $p \in I_\text{ub}$ and, in particular, $\widetilde w_{p_1} \in \big[ \pm\partial^\infty_A f_{p_1} (\bar x) \backslash\{0\} \big]$. From $\sum_{p \in I_\text{ub}} \widetilde w^k_p \rightarrow_N 0$, we obtain
\[
	0 \in \left[\,\pm\partial^\infty_A f_{p_1}(\bar x) \backslash \{0\}\,\right] + \sum_{p \in I_\text{ub}\backslash\{p_1\}} \left[\,\pm\partial^\infty_A f_p(\bar x)\,\right],
\]
a contradiction to Assumption 5 since the coefficient of the term $[\,\pm\partial^\infty_A f_{p_1}(\bar x) \backslash \{0\}\,]$ is nonzero. So far, we have shown the boundedness of $\{y^k_p \, v^k_p\}_{k \in N}$ for each $p \in I_\text{ub}$.

\gap

\underline{\em Step 2:} Now suppose that $y^k_p \, v^k_p \rightarrow_N \bar w_p$ for each $p \in I_\text{ub}$ with $\sum_{p \in I_\text{ub}} \bar w_p = -\bar w$. Thus $y^k_p \rightarrow_N 0$ and $\bar w_p \in \left[\,\pm\partial^\infty_A f_p(\bar x)\,\right]$ for each $p \in I_\text{ub}$. Since $\sum_{p=m_1+1}^{m} |\bar y_p| = \sum_{p \in I_\text{b}} |\bar y_p| = 1$, there exists $p_2 \in I_\text{b}$ such that $\bar y_{p_2} \neq 0$. Then $\sum_{p=m_1+1}^{m} y^k_p \, v^k_p \rightarrow_N 0$ implies
\[
	0 \in \bar y_{p_2}\,\partial_A f_{p_2}(\bar x) + \sum_{p \in I_\text{b}\backslash\{p_2\}} \bar y_p \,\partial_A f_p(\bar x) + \sum_{p \in I_\text{ub}} \left[\,\pm\partial^\infty_A f_p(\bar x)\,\right],
\]
which leads to a contradiction to Assumption 5. Thus, $Y^k( x^{k+1} )$ is non-empty and compact for all sufficiently large $k \in N$.

\gap

(b) By part (a), assume from now on that $Y^k( x^{k+1} ) \neq \emptyset$ for all $k \in N$ without loss of generality. {We also assume that $\{f^k_p(x^{k,i_k+1})\}_{k \in N}$ converges to some point $\bar z_p \in T_p(\bar x)$ for $p=1,\cdots,m$. Then, by {\eqref{eq:f_upper}, \eqref{eq:f_lower} and} \eqref{eq:inner_stopping}, $f^{k,\text{upper}}_p(x^{k, i_k+1}; x^{k, i_k}) \to_{N} \bar z_p$ for $p=1,\cdots,m$ and $f^{k,\text{lower}}_p(x^{k, i_k+1}; x^{k, i_k}) \to_{N} \bar z_p$ for $p \in I_2$.} For any $k \in N$ and any $(y^k_{1,1}, y^k_{1,2}, \cdots, y^k_{m,1}, y^k_{m,2}) \in Y^k( x^{k+1} )$, we have $y^k_{p,1} \in \partial\varphi^{\uparrow}_p( f^{k,\text{upper}}_p(x^{k, i_k+1}; x^{k, i_k}))$ and $y^k_{p,2} \in \partial\varphi^{\downarrow}_p( f^{k,\text{lower}}_p(x^{k, i_k+1}; x^{k, i_k}))$ for $p=1, \cdots, m$ satisfying
\begin{equation}\label{eq:inner_optimality}
    0 \in \sum\limits_{p=1}^{m} \Big[\,y^k_{p,1} \big[\partial g^k_p(x^{k,i_k + 1}) - \partial\, h^k_p(x^{k,i_k})\big] +
    y^k_{p,2} \big[\partial g^k_p(x^{k,i_k}) - \partial\, h^k_p(x^{k,i_k+1})\big]\Big] + \lambda(x^{k,i_k + 1} - x^{k, i_k}).
\end{equation}
Due to Assumption 3 and similar arguments in the proof of part (a), the optimality condition \eqref{eq:inner_optimality} implies that
\begin{equation}\label{eq:main_converge_error}
\left\{\begin{array}{l}
    \displaystyle\left\| \sum_{p=1}^{m} \big(y^k_{p,1} \, v^k_{p,1} + y^k_{p,2} \, v^k_{p,2}\big) \right\| {\leq \left[\lambda + \sum\limits_{p=1}^{m} \big(|y^k_{p,1}| + |y^k_{p,2}|\big) \ell_k\right] \frac{\delta_k}{\lambda + \ell_k} \leq \max\left\{ 1, \sum\limits_{p=1}^{m} \big(|y^k_{p,1}| + |y^k_{p,2}|\big)\right\} \delta_k}, \\[0.2in]
	\left[
	\begin{array}{cc}
	v^k_{p,1} \in \partial g^k_p(x^{k, i_k}) - \partial h^k_p(x^{k, i_k})\\[0.1in]
	v^k_{p,2} \in \partial g^k_p(x^{k, i_k + 1}) - \partial h^k_p(x^{k, i_k + 1})
	\end{array}
	\right]
	\,\text{or}\,
	\left[
	\begin{array}{cc}
	v^k_{p,1} \in \partial g^k_p(x^{k, i_k + 1}) - \partial h^k_p(x^{k, i_k + 1})\\[0.1in]
	v^k_{p,2} \in \partial g^k_p(x^{k, i_k}) - \partial h^k_p(x^{k, i_k})
	\end{array}
	\right],\, p=1, \cdots, m.
\end{array}\right.
\end{equation}
Note that, for $p \in I_1$, $\varphi_p$ is nondecreasing, i.e., $\varphi^\downarrow_p = 0$. Then ${y^k_{p,2}} = 0$ for all $k \in N$ and $p \in {I_1}$, and the first inequality of \eqref{eq:main_converge_error} is equivalent to
\begin{equation}\label{eq:main_converge_error2}
    \left\| \sum\limits_{p \in I_1} y^k_{p,1} \, v^k_{p,1} + \sum\limits_{p \in I_2}\big(y^k_{p,1} \, v^k_{p,1} + y^k_{p,2} \, v^k_{p,2}\big) \right\|
    \leq
    {\max\left\{1, \sum\limits_{p \in I_1} |y^k_{p,1}| + \sum\limits_{p \in I_2} \big(|y^k_{p,1}| + |y^k_{p,2}|\big) \right\} \delta_k}.
\end{equation}
Observe that the sequences $\{v^k_{p,1}\}_{k \in N}$ and $\{v^k_{p,2}\}_{k \in N}$ must be bounded for $p \in I_2$. Otherwise, we could assume $\|v^k_{p,1}\| \rightarrow_N +\infty$. Then every accumulation point of the unit vectors $\{v^k_{p,1}/\|v^k_{p,1}\|\}_{k \in N}$ would be in the set $\partial^\infty_A f_p(\bar x)$, contradicting our assumption that $\partial^\infty_A f_p(\bar x) = \{0\}$ for each $p \in I_2$.

\gap

For $p \in I_2 \subset \{1, \cdots, m_1\}$, given that $\varphi^\uparrow_p$ is convex, real-valued, and $f^{k,\text{upper}}_p(x^{k, i_k+1}; x^{k, i_k}) \rightarrow_N \bar z_p$, we can invoke \cite[Theorem 24.7]{rockafellar1970convex} to deduce the boundedness of $\{y^k_{p,1}\}_{k \in N}$. A parallel reasoning applies to demonstrate the boundedness of $\{y^k_{p,2}\}_{k \in N}$.

For $p \in I_1$, we proceed by contradiction to establish the boundedness of $\{y^k_{p,1}\}_{k \in N}$ based on Assumption 5. Suppose that $\big\{\sum_{p \in I_1} |y^k_{p,1}|\big\}_{k \in N}$ is unbounded and $\sum_{p \in I_1} |y^k_{p,1}| \rightarrow_N +\infty$ by passing to a subsequence. Consider the normalized subsequences $\big\{\widetilde y^k_{p,1} \triangleq y^k_{p,1} / \sum_{{p^\prime} \in I_1} |y^k_{{p^\prime},1}|\big\}_{k \in N}$ and {$\big\{\widetilde y^k_{p,2} \triangleq y^k_{p,2} / \sum_{p^\prime \in I_1} |y^k_{p^\prime,1}|\big\}_{k \in N}$} for each $p$. Consequently, $\widetilde y^k_{p,1} \rightarrow_N 0$ {and $\widetilde y^k_{p,2} \rightarrow_N 0$} for $p \in I_2$. By the triangle inequality and \eqref{eq:main_converge_error2}, we have
\[
\begin{array}{rl}
    &\left|\; \displaystyle\left\| \sum_{p \in I_1} \widetilde y^k_{p,1} \, v^k_{p,1}\right\| - \left\| \sum_{p \in I_2}\big(\widetilde y^k_{p,1} \, v^k_{p,1} + \widetilde y^k_{p,2} \, v^k_{p,2}\big)  \right\| \; \right| \, 
    \displaystyle\leq \, \left\| \sum_{p \in I_1} \widetilde y^k_{p,1} \, v^k_{p,1} + \sum_{p \in I_2}\big(\widetilde y^k_{p,1} \, v^k_{p,1} + \widetilde y^k_{p,2} \, v^k_{p,2}\big) \right\|\\[0.22in]
    \leq& {\displaystyle\max\left\{ \frac{1}{\sum_{p \in I_1} |y^k_{p,1}|}, 1 + \frac{\sum_{p \in I_2} \big(|y^k_{p,1}| + |y^k_{p,2}|\big)} {\sum_{p \in I_1} |y^k_{p,1}|}  \right\} \delta_k}
    \;\longrightarrow_N\; 0,
\end{array}
\]
which further implies $\big\|\sum_{p \in I_1} \widetilde y^k_{p,1} \, v^k_{p,1} \big\| \rightarrow_N 0$ by the boundedness of $\{v^k_{p,1}\}_{k \in N}$ and $\{v^k_{p,2}\}_{k \in N}$ for $p \in I_2$. Now suppose that $\widetilde y^k_{p,1} \rightarrow_N \widetilde y_{p,1}$ for $p \in I_1$. Then from a similar reasoning in \eqref{eq:horizon2normal}, for $p \in I_1$,
\[
    \widetilde y_{p,1} \in {\Limsup_{k(\in N) \rightarrow +\infty}}^\infty \;\partial\varphi^{\uparrow}_p \left(f^{k,\text{upper}}_p(x^{k, i_k+1}; x^{k, i_k})\right)
    \subset \partial^\infty \varphi^\uparrow_p(\bar z_p) = \mathcal N_{\dom\varphi^\uparrow_p} (\bar z_p),
\]
and obviously $\sum_{p \in I_1} |\widetilde y_{p,1}| = 1$. The remaining argument to derive a contradiction to Assumption 5 follows the same steps as the proof of part (a) for the two cases, with the exception that the index set $\{m_1+1, \cdots, m\}$ is replaced by $I_1$. {Thus, the sequences $\{y^k_{p,1}\}_{k \in N}$ for $p \in I_1 \cup I_2$ and $\{y^k_{p,2}\}_{k \in N}$ for $p \in I_1$ are bounded. We can conclude that $\bigcup \{Y^k( x^{k+1} ) \mid k \in N, k \geq K\}$ is bounded for sufficiently large integer $K$, because otherwise we could extract a subsequence of multipliers from $Y^k( x^{k+1} )$ whose norms diverge to $+\infty$ as $k (\in N) \to +\infty$, in contradiction to the result of boundedness that we have shown. Hence, the subsequence $\{Y^k( x^{k+1} )\}_{k \in N}$ is eventually bounded. }
\end{proof}

We make a remark on Lemma \ref{lem:CQ_asymptotic}(b) about the additional assumption. According to the proof of part (b), the assumption $\partial^\infty_A f_p(\bar x) = \{0\}$ for $p \in I_2$ ensures the boundedness of the set $\partial_A f_p(\bar x)$ for $p \in I_2$. {There are some sufficient conditions for $\partial^\infty_A f_p(\bar x) = \{0\}$ to hold: (i) If $f_p$ is locally Lipschitz continuous and bounded {from} below, {by} {Theorem} \ref{thm:eADC_subdiff}(b), we have $\partial^\infty_A f_p(x) = \{0\}$ at any $x \in \dom f_p$ for the approximating sequence generated by the Moreau envelope. (ii) If $f_p$ is icc associated with $\overline{f}_p$ satisfying all assumptions in {Proposition \ref{proposition: icc ADC}}, it then follows from {Proposition \ref{proposition: icc ADC}(b)} that $\partial^\infty_A f_p(x) = \{0\}$ at any $x \in \interior(\dom f_p)$ for the approximating sequence based on the partial Moreau envelope. It is worth mentioning that the icc function $f_p$ under condition (ii) is not necessarily locally Lipschitz continuous.} 

The main convergence result of the prox-ADC method follows.

\begin{theorem}
\label{thm:main_result}
    Suppose that Assumptions 1-5 hold. Let $\{x^k\}$ be the sequence generated by the prox-ADC method. Suppose that $\{x^k\}$ has an accumulation point $\bar x$ and, in addition, $\partial^\infty_A f_p(\bar x) = \{0\}$ for $p \in I_2$. Then $\bar x$ is a weakly A-stationary point of \eqref{eq:cvx_composite_constraints}. 
    Moreover, if {for each $p \in I_2$, the functions $g^k_p$ and $h^k_p$ are $\ell_k$-smooth for all ${k \in \N}$}, i.e., there exists a sequence $\{\ell_k\} $ such that for all ${k \in \N}$,
\begin{equation}\label{assumption: smooth}
	\max\Big\{\, \big\|\nabla g^k_p(x) - \nabla g^k_p(x^\prime)\big\|,\,
	\big\|\nabla h^k_p(x) - \nabla h^k_p(x^\prime)\big\| \,\Big\} \leq \ell_k \|x^\prime - x\|
	\quad\forall\, x, x^\prime \in \R^n, \; p \in I_2,
\end{equation}
then $\bar x$ is also an A-stationary point of \eqref{eq:cvx_composite_constraints}.
\end{theorem}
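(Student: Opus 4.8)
The plan is to pass to the limit in the first-order optimality system of the convex subproblem \eqref{eq:subproblem} along a subsequence converging to $\bar x$, and to match the resulting limits against the (weak) A-stationarity inclusion term by term. First I would fix an index set $N\in\N^\sharp_\infty$ with $x^{k+1}\to_N\bar x$; since $\|x^{k,i_k+1}-x^{k,i_k}\|\le\delta_k/(\lambda+\ell_k)\to 0$ by \eqref{eq:inner_stopping}, one also has $x^{k,i_k+1}\to_N\bar x$. Feasibility of $\bar x$ for \eqref{eq:cvx_composite_constraints} follows because $f^k_p(x^{k,i_k+1})\le f^{k,\text{upper}}_p(x^{k,i_k+1};x^{k,i_k})\le 0$ for $p=m_1+1,\dots,m$ by \eqref{eq:f_upper} and Theorem \ref{thm:converge1}(a), so the liminf half of the epi-convergence $f^k_p\Epiconv f_p$ gives $f_p(\bar x)\le\liminf_{k\in N} f^k_p(x^{k,i_k+1})\le 0$; hence $\bar x\in\bigcap_p\dom F_p$. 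By Lemma \ref{lem:CQ_asymptotic}, the multiplier sets $Y^k(x^{k+1})$ are nonempty and eventually bounded along $N$, so after refining $N$ I may assume $y^k_{p,1}\to_N\bar y_{p,1}$, $y^k_{p,2}\to_N\bar y_{p,2}$, $f^k_p(x^{k,i_k+1})\to_N\bar z_p\in T_p(\bar x)$ for all $p$, and the matching limits of $f^{k,\text{upper}}_p$ (all $p$) and $f^{k,\text{lower}}_p$ ($p\in I_2$), all equal to $\bar z_p$. The closed-graph property of the convex subdifferentials $\partial\varphi^\uparrow_p,\partial\varphi^\downarrow_p$ then yields $\bar y_{p,1}\in\partial\varphi^\uparrow_p(\bar z_p)$ and $\bar y_{p,2}\in\partial\varphi^\downarrow_p(\bar z_p)$, with $\bar y_{p,2}=0$ for $p\in I_1$.

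Next I would record the subproblem optimality condition \eqref{eq:inner_optimality}: there exist $v^k_{p,1}\in\partial g^k_p(x^{k,i_k+1})-\partial h^k_p(x^{k,i_k})$ and $v^k_{p,2}\in\partial g^k_p(x^{k,i_k})-\partial h^k_p(x^{k,i_k+1})$ with $\big\|\sum_p(y^k_{p,1}v^k_{p,1}+y^k_{p,2}v^k_{p,2})\big\|=\lambda\|x^{k,i_k+1}-x^{k,i_k}\|\le\delta_k\to 0$. The crux is to show each summand $w^k_p\triangleq y^k_{p,1}v^k_{p,1}+y^k_{p,2}v^k_{p,2}$ has a subsequential limit lying in $\{\bar y_{p,1}\partial_A f_p(\bar x)+\bar y_{p,2}\partial_A f_p(\bar x)\}\cup[\pm\partial^\infty_A f_p(\bar x)\setminus\{0\}]$. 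The key device is Assumption 3: for each $p$, along a further subsequence either $\partial g^k_p$ or $\partial h^k_p$ is $\ell_k$-Lipschitz in the Hausdorff distance, and since the two evaluation points differ by at most $\delta_k/(\lambda+\ell_k)$, I can replace the off-point subgradient by one at the common point, incurring an error of order $\delta_k\to 0$. This aligns each $v^k_{p,\cdot}$ to a single-point difference $\partial g^k_p(\xi^k)-\partial h^k_p(\xi^k)$ with $\xi^k\to_N\bar x$, so that any bounded limit of $v^k_{p,\cdot}$ lies in $\partial_A f_p(\bar x)$, while any limit of $|y^k_{p,1}|\,v^k_{p,1}$ along $|y^k_{p,1}|\downarrow 0$ lies in $\partial^\infty_A f_p(\bar x)$.

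I would then carry out the boundedness bookkeeping exactly as in Lemma \ref{lem:CQ_asymptotic}(b) and the proof of Theorem \ref{thm:necessary_cond}. For $p\in I_2$ the hypothesis $\partial^\infty_A f_p(\bar x)=\{0\}$ forces $\{v^k_{p,1}\},\{v^k_{p,2}\}$ bounded, so $w^k_p\to_N\bar y_{p,1}\bar v_{p,1}+\bar y_{p,2}\bar v_{p,2}$ with $\bar v_{p,1},\bar v_{p,2}\in\partial_A f_p(\bar x)$. For $p\in I_1$ one has $\bar y_{p,2}=0$; splitting these indices into those with bounded $\{v^k_{p,1}\}$ and an unbounded set $I_{\mathrm{ub}}$, the horizon-normalization argument of Lemma \ref{lem:CQ_asymptotic}(b), invoking Assumption 5, shows $y^k_{p,1}\to_N 0$ for $p\in I_{\mathrm{ub}}$; consequently $w^k_p$ either converges to $\bar y_{p,1}\bar v_{p,1}$ with $\bar v_{p,1}\in\partial_A f_p(\bar x)$ (bounded case) or to a nonzero element of $\pm\partial^\infty_A f_p(\bar x)$, while the existence of $y_p\in\partial\varphi_p(\bar z_p)$ for the latter indices is guaranteed by $0=\bar y_{p,1}\in\partial\varphi^\uparrow_p(\bar z_p)=\partial\varphi_p(\bar z_p)$. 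Passing to the limit in $\sum_p w^k_p\to 0$ and collecting the per-$p$ memberships yields $0\in\sum_p(\{\bar y_{p,1}\partial_A f_p(\bar x)+\bar y_{p,2}\partial_A f_p(\bar x)\}\cup[\pm\partial^\infty_A f_p(\bar x)\setminus\{0\}])$, which is weak A-stationarity.

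For the strengthened conclusion I would add that full $\ell_k$-smoothness of both $g^k_p$ and $h^k_p$ for $p\in I_2$ gives $\|v^k_{p,1}-v^k_{p,2}\|\le 2\ell_k\|x^{k,i_k+1}-x^{k,i_k}\|\le 2\delta_k\to 0$, so the two aligned limits coincide, $\bar v_{p,1}=\bar v_{p,2}\triangleq\bar v_p\in\partial_A f_p(\bar x)$, and hence $w^k_p\to_N(\bar y_{p,1}+\bar y_{p,2})\bar v_p=\bar y_p\bar v_p$ with $\bar y_p\in\partial\varphi_p(\bar z_p)$ by the sum rule of Lemma \ref{lem:cvx_monotone_decomposition} (note $\interior\dom\varphi_p=\R\neq\emptyset$ for $p\le m_1$); for $p\in I_1$ the single-multiplier form is automatic since $\bar y_{p,2}=0$, and the horizon terms are untouched, giving $0\in\sum_p(\{\bar y_p\partial_A f_p(\bar x)\}\cup[\pm\partial^\infty_A f_p(\bar x)\setminus\{0\}])$, i.e.\ A-stationarity. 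I expect the main obstacle to be precisely the alignment step and its interaction with the two evaluation points $x^{k,i_k},x^{k,i_k+1}$: Assumption 3 controls only one of $\partial g^k_p,\partial h^k_p$, so $v^k_{p,1}$ and $v^k_{p,2}$ may be aligned to different points, which is exactly why the weak form lets $\bar y_{p,1}$ and $\bar y_{p,2}$ multiply distinct elements of $\partial_A f_p(\bar x)$; collapsing them to a common element is what the extra smoothness buys, and making this bookkeeping rigorous while simultaneously taming the horizon terms through Assumption 5 is the delicate part.
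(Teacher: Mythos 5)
Your proposal is correct and follows essentially the same route as the paper's proof: establishing feasibility of $\bar x$ via epi-convergence, invoking Lemma \ref{lem:CQ_asymptotic} for the multiplier bounds, aligning the two-point subgradient evaluations through Assumption 3 with an $O(\delta_k)$ error, running the bounded/unbounded case split against Assumption 5 exactly as in Theorem \ref{thm:necessary_cond}, and using the extra smoothness \eqref{assumption: smooth} to collapse $v^k_{p,1},v^k_{p,2}$ to a common limit so that $\bar y_{p,1}+\bar y_{p,2}\in\partial\varphi_p(\bar z_p)$ by Lemma \ref{lem:cvx_monotone_decomposition}. The only cosmetic imprecision is writing the optimality residual as an equality $\lambda\|x^{k,i_k+1}-x^{k,i_k}\|$ before the alignment step, which should read as the existence of selections achieving that bound, but this does not affect the argument.
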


\begin{proof}
Let $\{x^{k+1}\}_{k \in N}$ be a subsequence converging to $\bar x$. By the stopping conditions \eqref{eq:inner_stopping} and $x^{k, i_k} \rightarrow_N \bar x$, we also have $x^{k, i_k+1} \rightarrow_N \bar x$. 
First, we prove $\bar x \in \bigcap^m_{p=1} \dom F_p$. {From Theorem \ref{thm:converge1}(a), we have $f^k_p(x^{k, i_k + 1}) \leq 0$ for $p = m_1+1,\cdots,m$ and all $k \in \N$. Due to epi-convergence in Assumption 1(c), it holds that
\[
    \delta_{(-\infty,0]}(f_p(\bar x)) \leq \liminf_{k(\in N) \to +\infty} \delta_{(-\infty,0]}(f^k_p(x^{k, i_k+1})) = 0, \qquad p=m_1+1, \cdots, m.
\]
Thus $f_p(\bar x) \leq 0$ for $p = m_1+1,\cdots,m$ and $\bar x \in {\bigcap^m_{p=m_1+1}\dom F_p}$.} {By Assumption 1(a), $\dom \varphi_p = \R^n$ for all $p = 1, \cdots, m_1$. This implies $\bar x \in \bigcap^{m_1}_{p=1} \dom F_p$, and we can conclude that $\bar x \in \bigcap^m_{p=1} \dom F_p$.}

\gap

By Lemma \ref{lem:CQ_asymptotic}(a), for all {sufficiently large} $k \in N$, we have
\begin{equation}\label{eq:thm4_optimality}
    \hskip -0.05in
    0 \in \sum\limits_{p=1}^{m} \Big[\,y^k_{p,1} \left(\partial g^k_p(x^{k,i_k + 1}) - \partial\, h^k_p(x^{k,i_k})\right) +
    y^k_{p,2} \left(\partial g^k_p(x^{k,i_k}) - \partial\, h^k_p(x^{k,i_k+1})\right)\Big] + \lambda(x^{k,i_k + 1} - x^{k, i_k}),
\end{equation}
where $y^k_{p,1} \in \partial\varphi^{\uparrow}_p( f^{k,\text{upper}}_p(x^{k, i_k+1}; x^{k, i_k}))$ and $y^k_{p,2} \in \partial\varphi^{\downarrow}_p( f^{k,\text{lower}}_p(x^{k, i_k+1}; x^{k, i_k}))$ for $p=1, \cdots, m$. 
It follows from Lemma \ref{lem:CQ_asymptotic}(b) that the subsequences $\{y^k_{p,1}\}_{k \in N}$ and $\{y^k_{p,2}\}_{k \in N}$ are bounded for $p=1,\cdots,m$. Suppose that $y^k_{p,1} \to_N \bar y_{p,1}$ and $y^k_{p,2} \to_N \bar y_{p,2}$ for $p=1, \cdots, m$. 
{Recall that the subsequence $\{f^k_p (x^{k,i_k+1})\}_{k \in N}$ is bounded by Assumption 1(b) for $p=1,\cdots,m$. Without loss of generality, assume that $\{f^k_p(x^{k,i_k+1})\}_{k \in N}$ converges to some point $\bar z_p \in T_p(\bar x)$ for $p=1,\cdots,m$. Then, by {\eqref{eq:f_upper}, \eqref{eq:f_lower} and} \eqref{eq:inner_stopping}, $f^{k,\text{upper}}_p(x^{k, i_k+1}; x^{k, i_k}) \to_{N} \bar z_p$ for $p=1,\cdots,m$ and $f^{k,\text{lower}}_p(x^{k, i_k+1}; x^{k, i_k}) \to_{N} \bar z_p$ for $p \in I_2$.}
From the outer semicontinuity of $\partial\varphi^\uparrow_p$ and $\partial\varphi^\downarrow$, we have $\bar y_{p,1} \in \partial\varphi^\uparrow_p(\bar z_p)$ for $p=1,\cdots,m$ and $\bar y_{p,2} \in \partial\varphi^\downarrow_p(\bar z_p)$ for $p \in I_2$.

\gap

To proceed, we prove by contradiction that the sequence $\{y^k_{p,1} \, v^k_{p,1}\}_{k \in N}$ is bounded for $p \in I_1$. Suppose that $\sum_{p \in I_1} \|y^k_{p,1} \, v^k_{p,1}\| \rightarrow_N +\infty$. {For each $p \in I_2$, the boundedness of $\{v^k_{p,1}\}_{k \in N}$ and $\{v^k_{p,2}\}_{k \in N}$ follows from the assumption $\partial^\infty_A f_p(\bar x) = \{0\}$; otherwise, any accumulation point of the unit vectors $\{v^k_{p,1} / \|v^k_{p,1}\|\}_{k \in N}$ would be in $\partial^\infty_A f_p(\bar x)$, leading to a contradiction. Since $\{y^k_{p,1}\}_{k \in N}$ and $\{y^k_{p,1}\}_{k \in N}$ for $p \in I_2$ are also bounded,} we conclude that the subsequence $\big\{\sum_{p \in I_2}(y^k_{p,1} \, v^k_{p,1} + y^k_{p,2} \, v^k_{p,2}) \big\}_{k \in N}$ is bounded. Thus, we can assume that
\[
    \sum_{p \in I_2} \big(y^k_{p,1} \, v^k_{p,1} + y^k_{p,2} \, v^k_{p,2}\big) \rightarrow_N \; \bar w
    ~\left(
    \in \sum_{p \in I_2} (\bar y_{p,1} \, \partial_A f_p(\bar x) + \bar y_{p,2} \, \partial_A f_p(\bar x))
    \right).
\]
{By \eqref{eq:main_converge_error2}}, it follows that $\sum_{p \in I_1} y^k_{p,1} \, v^k_{p,1} \rightarrow_N (-\bar w)$. Consider $\widetilde w^k_p \triangleq y^k_{p,1} \, v^k_{p,1}/\sum_{{p^\prime} \in I_1} \|y^k_{{p^\prime},1} \, v^k_{{p^\prime},1}\|$ for $p \in I_1$, and then $\sum_{p \in I_1} \widetilde w^k_p \rightarrow_N 0$. Given $\sum_{p \in I_1} \|\widetilde w^k_p\| = 1$ for all $k \in N$, there must exist $p_1 \in I_1$ such that $\widetilde w^k_{p_1} \rightarrow_N \widetilde w_{p_1} \neq 0$. For each $p \in I_1$, it then follows from $y^k_{p,1}/\sum_{{p^\prime} \in I_1} \|y^k_{{p^\prime},1} v^k_{{p^\prime},1}\| \to_N 0$ that $\{\widetilde w^k_p\}_{k \in N}$ has a subsequence converging to some element in $\partial^\infty_A f_p(\bar x)$. In particular, $\widetilde w_{p_1} \in \partial^\infty_A f_{p_1} (\bar x) \backslash\{0\}$. Since $\sum_{p \in I_1} \widetilde w^k_p \rightarrow_N 0$, this implies that
\[
    0 \in \left[\,\partial^\infty_A f_{p_1}(\bar x) \backslash \{0\}\,\right] + \sum_{p \in I_1\backslash\{p_1\}} \partial^\infty_A f_p(\bar x),
    \vspace{-0.1in}
\]
which contradicts Assumption 5. Hence, $\{y^k_{p,1} \, v^k_{p,1}\}_{k \in N}$ is bounded for $p \in I_1$.

\gap

We are now ready to prove that $\bar x$ is a weakly A-stationary point. Suppose that $y^k_{p,1} \, v^k_{p,1} \rightarrow_N \bar w_p$ for $p \in I_1$ with $\sum_{p \in I_1} \bar w_p = -\bar w$. It remains to show that for each $p \in I_1$, there exists $\bar y_{p,1} \in \bigcup \{\partial\varphi^\uparrow_p(t_p) \mid t_p \in T_p(\bar x)\}$ such that
\[
    \bar w_p \in \left\{\, \bar y_{p,1} \, \partial_A f_p(\bar x) \,\right\} \cup \left[\, \partial^\infty_A f_p(\bar x)\backslash\{0\} \,\right],
\]
which can be derived similarly as the proof of \eqref{eq:w_infty_split} in Theorem \ref{thm:necessary_cond}. Summarizing these arguments, we conclude that $\bar x$ is a weakly A-stationary point of \eqref{eq:cvx_composite_constraints}.

\gap

Under the additional assumption of the theorem, there exist $y^k_{p,1} \in \partial\varphi^{\uparrow}_p( f^{k,\text{upper}}_p(x^{k, i_k+1}; x^{k, i_k}))$, $y^k_{p,2} \in \partial\varphi^{\downarrow}_p( f^{k,\text{lower}}_p(x^{k, i_k+1}; x^{k, i_k}))$ {for $p=1,\cdots,m$}, and
\[
    v^k_{p,1} \in \left\{\partial g^k_p(x^{k, i_k}) - \partial h^k_p(x^{k, i_k})\right\} \cup \left\{\partial g^k_p(x^{k, i_k + 1}) - \partial h^k_p(x^{k, i_k + 1})\right\}  {\mbox{ for } p \in I_1}
\]
such that
\begin{equation*}\label{eq:main_conv_add}
\begin{aligned}
    & \left\| \sum_{p \in I_1} y^k_{p,1} \, v^k_{p,1} + \sum_{p \in I_2} (y^k_{p,1} + y^k_{p,2}) \left[\nabla g^k_p(x^{k,i_k}) - \nabla h^k_p(x^{k,i_k})\right] \right\|\\
    \overset{(\rm\romannumeral6)}{\leq} \;&\lambda \|x^{k,i_k + 1} - x^{k,i_k}\| + \sum\limits_{p \in I_1} |y^k_{p,1}| \cdot \min\Big\{ {\Hausd \left(\partial g^k_p(x^{k,i_k + 1}), \partial g^k_p(x^{k,i_k})\right),\, \Hausd \left( \partial h^k_p(x^{k,i_k + 1}), \partial h^k_p(x^{k,i_k})\right)} \Big\} \\
    &+\sum_{p \in I_2} \Big(|y^k_{p,1}| \cdot \|\nabla g^k_p(x^{k, i_k}) - \nabla g^k_p(x^{k, i_k + 1})\| + |y^k_{p,2}| \cdot \|\nabla h^k_p(x^{k, i_k + 1}) - \nabla h^k_p(x^{k, i_k})\|\Big) \\[0.05in]
    \overset{(\rm\romannumeral7)}{\leq} \,& \left[\lambda + \left(\sum_{p \in I_1} |y^k_{p,1}| + \sum_{p \in I_2} \big(|y^k_{p,1}| + |y^k_{p,2}|\big)\right) \ell_k \right] \|x^{k,i_k + 1} - x^{k,i_k}\| \\[0.05in]
    \overset{(\rm\romannumeral8)}{\leq} & {\max\left\{1, \sum\limits_{p \in I_1} |y^k_{p,1}| + \sum\limits_{p \in I_2} \big(|y^k_{p,1}| + |y^k_{p,2}|\big) \right\} \delta_k
    \qquad\forall\, k \in \N},
\end{aligned}
\end{equation*}
where $(\rm\romannumeral6)$ is implied by the optimality condition \eqref{eq:thm4_optimality}, $(\rm\romannumeral7)$ employs \eqref{assumption: smooth} and Assumption 3, and $(\rm\romannumeral8)$ follows from conditions \eqref{eq:inner_stopping}. This inequality is a tighter version of \eqref{eq:main_converge_error2} in the sense that, for each $p \in I_2$ and ${k \in \N}$, $v^k_{p,1}$ and $v^k_{p,2}$ are elements taken from the single-valued mapping $\nabla g^k_p(\cdot) - \nabla h^k_p(\cdot)$ evaluated at the same point $x^{k,i_k}$. A straightforward adaptation of the preceding argument confirms that $\bar x$ is an A-stationary point of \eqref{eq:cvx_composite_constraints}. 
\end{proof}

\gap


{
\subsection{Termination {criteria}.}
\label{subsec:terminate}

The previous subsection demonstrates the asymptotic convergence of the algorithm, showing that any accumulation point of the sequence generated by the prox-ADC method is weakly A-stationary. This subsection is dedicated to the non-asymptotic analysis of verifiable termination criteria for practical implementation.

\gap\gap

\begin{center}
\fbox{\parbox{0.98\textwidth}{
\noindent{\bf Assumption 6} ({\bf non-asymptotic constraint qualifications}) Let $\lambda$ be the parameter in Algorithm \ref{alg:double-loop}. For all $k \in \N$ and any pair $(x^\prime, x^{\prime\prime})$ satisfying
\vspace{-0.1in}
\[
    x^{\prime\prime} =
    \left[\begin{array}{cl}
    \displaystyle\operatornamewithlimits{argmin}_{x \in \R^n}&\;\; \sum\limits_{p=1}^{m_1} \widehat{F^k_p}(x;x^{\prime}) + \frac{\lambda}{2}\|x - x^{\prime}\|^2 \\[0.15in]
    \text{subject to}&\;\; f^{k,\text{upper}}_p(x; x^{\prime}) \leq 0,\, p=m_1 + 1, \cdots, m
    \end{array}\right],
\]
if there exist $y^k_{p} \in \mathcal N_{(-\infty,0]} (f_p^{k,\text{upper}}(x^{\prime\prime}; x^\prime))$ for $p=m_1 + 1, \cdots, m$ such that
\vspace{-0.08in}
\[
    0 \in \sum\limits_{p=m_1+1}^{m} y^k_{p} \, \partial f_p^{k,\text{upper}}(x^{\prime\prime}; x^\prime),
    \vspace{-0.08in}
\]
then we must have $y^k_{m_1 + 1} = \cdots = y^k_{m} = 0$.
}}
\end{center}

A direct consequence of Assumption 6 and the nonsmooth Lagrange multiplier rule \cite[Exercise 10.52]{rockafellar2009variational} is that the set of multipliers $Y^k(x^{k + 1})$ is non-empty and compact for any fixed $k \in \N$. This is in contrast with Lemma \ref{lem:CQ_asymptotic}(a), where the results only hold for sufficiently large $k \in N$. We will show below that the result on the eventual boundedness of the subsequence $\{Y^k(x^{k+1})\}_{k \in N}$ can be strengthened to the equi-boundedness under this assumption.

\begin{proposition}[equi-boundedness of multipliers]
\label{prop:equi-bounded}
    Suppose that Assumptions 1-6 hold. Consider any sequence $\{x^{k}\}$ generated by the prox-ADC method. The following statements hold.\\
    (a) If there is a subsequence $\{x^{k+1}\}_{k \in N}$ converging to some $\bar x$ and $\partial^\infty_A f_p(\bar x) = \{0\}$ for $p \in I_2$, then the subsequence $\big\{Y^k( x^{k+1} )\big\}_{k \in N}$ is equi-bounded.\\
    (b) If $\{x^k\}$ is bounded and $\partial^\infty_A f_p(x) = \{0\}$ for any $x \in \bigcap^m_{p=1} \dom F_p$ and $p \in I_2$, then the sequence $\big\{Y^k( x^{k+1} )\big\}$ is equi-bounded, i,e,
    \begin{equation}
    \label{eq:def_D}
        D \,\triangleq\, \sup_{k \in \N} \, \sup_{y \in Y^k(x^{k+1})} \|y\| < +\infty.
    \end{equation}
\end{proposition}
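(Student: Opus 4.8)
The plan is to derive both parts from the eventual-boundedness result already established in Lemma \ref{lem:CQ_asymptotic}(b), upgrading it to equi-boundedness by exploiting the fact that Assumption 6 forces each individual multiplier set $Y^k(x^{k+1})$ to be compact for \emph{every} $k$, not merely for large $k$. Concretely, for part (a) I would combine a bounded tail (from Lemma \ref{lem:CQ_asymptotic}(b)) with the finitely many early sets (each bounded as a consequence of Assumption 6), and for part (b) I would run a contradiction argument that reduces to part (a) along a convergent subsequence extracted from the bounded sequence $\{x^k\}$.

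For part (a), I would first verify that the limit point $\bar x$ lies in $\bigcap_{p=1}^m \dom F_p$ and is feasible for \eqref{eq:cvx_composite_constraints}. This follows verbatim from the opening of the proof of Theorem \ref{thm:main_result}: since $x^{k,i_k+1}\to_N\bar x$ and $f^k_p(x^{k,i_k+1})\le 0$ for $p=m_1+1,\dots,m$ (by Theorem \ref{thm:converge1}(a) together with \eqref{eq:f_upper}), epi-convergence in Assumption 1(c) yields $f_p(\bar x)\le 0$, while $\bar x\in\dom F_p$ for $p\le m_1$ is automatic because $\varphi_p$ is real-valued. With $\bar x$ feasible and $\partial^\infty_A f_p(\bar x)=\{0\}$ for $p\in I_2$, Lemma \ref{lem:CQ_asymptotic}(b) applies and gives an integer $K$ such that $\bigcup_{k\in N,\,k\ge K} Y^k(x^{k+1})$ is contained in some ball $\mathbb{B}(0,R_1)$. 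On the other hand, the discussion following Assumption 6 (the nonsmooth Lagrange multiplier rule applied at each step) shows that each $Y^k(x^{k+1})$ is nonempty and compact; hence the finite union $\bigcup_{k\in N,\,k<K} Y^k(x^{k+1})$ is bounded, say by $\mathbb{B}(0,R_2)$. Taking $R=\max\{R_1,R_2\}$ gives $Y^k(x^{k+1})\subset\mathbb{B}(0,R)$ for all $k\in N$, which is precisely equi-boundedness.

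For part (b), I would argue by contradiction. Suppose $D=+\infty$. Since each $Y^k(x^{k+1})$ is compact by Assumption 6, the per-index suprema $\sup_{y\in Y^k(x^{k+1})}\|y\|$ are finite, so $D=+\infty$ forces a strictly increasing sequence of indices $k_j\to+\infty$ and multipliers $\beta^{k_j}\in Y^{k_j}(x^{k_j+1})$ with $\|\beta^{k_j}\|\ge j$. Because $\{x^k\}$ is bounded, $\{x^{k_j+1}\}_j$ has a subsequence converging to some $\bar x$; let $N\in\N^\sharp_\infty$ collect the corresponding indices, so that $x^{k+1}\to_N\bar x$. As in part (a), $\bar x\in\bigcap_{p=1}^m\dom F_p$ is feasible, and therefore $\partial^\infty_A f_p(\bar x)=\{0\}$ for $p\in I_2$ by the standing hypothesis of part (b). Part (a) then yields equi-boundedness of $\{Y^k(x^{k+1})\}_{k\in N}$, contradicting $\|\beta^{k}\|\to+\infty$ along $k\in N$. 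Hence $D<+\infty$.

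The bulk of the analytical difficulty---normalizing unbounded multiplier sequences and ruling out their limits via Assumption 5---has already been discharged in Lemma \ref{lem:CQ_asymptotic}(b), so the main point here is conceptual: the non-asymptotic constraint qualification in Assumption 6 is exactly what upgrades ``eventual'' to ``equi'', by certifying boundedness of the finitely many early multiplier sets that the asymptotic Assumption 5 alone cannot control. The one place requiring care is the reduction in part (b): I must ensure the extracted limit $\bar x$ is feasible so that the hypothesis $\partial^\infty_A f_p(\bar x)=\{0\}$ is available before invoking part (a), and that the diverging multipliers are attached to indices lying in the very subsequence $N$ along which equi-boundedness is claimed.
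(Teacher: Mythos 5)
Your proposal is correct and follows essentially the same route as the paper: part (a) combines the eventual boundedness from Lemma \ref{lem:CQ_asymptotic}(b) with the per-index compactness of $Y^k(x^{k+1})$ guaranteed by Assumption 6, and part (b) is a contradiction argument that extracts a convergent subsequence of $\{x^{k+1}\}$ along diverging multiplier indices and reduces to part (a). Your explicit verification that the extracted limit point is feasible (so that Lemma \ref{lem:CQ_asymptotic}(b) and the hypothesis $\partial^\infty_A f_p(\bar x)=\{0\}$ apply), and your tracking of $x^{k_j+1}$ rather than $x^{k_j}$, are minor refinements of the same argument.
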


\begin{proof}
    (a) We know from Lemma \ref{lem:CQ_asymptotic}(b) that the subsequence $\{Y^k(x^{k+1})\}_{k \in N}$ is eventually bounded. This implies the existence of an index $K \in N$ such that $\bigcup \{Y^k(x^{k+1}) \mid k \in N, k \geq K\}$ is bounded. 
    On the other hand, it follows from Assumption 6 that $Y^k(x^{k + 1})$ is non-empty and compact for any fixed $k \in N$. Thus, $\bigcup \{Y^k(x^{k+1}) \mid k \in N\}$ is bounded, and $\{Y^k(x^{k+1})\}_{k \in N}$ is equi-bounded.

    (b) Suppose for contradiction that $\{Y^k(x^{k+1})\}$ is not equi-bounded. Then for any nonnegative integer $j$, there is an index $k_j \in \N$ such that $\|y^{k_j}\| \geq j$ for some multiplier $y^{k_j} \in Y^{k_j}\big(x^{k_j+1}\big)$. Observe that the nonnegative sequence of indices $\{k_j\}_{j \in \N}$ is either bounded or unbounded. It suﬃces to consider these two cases separately.
    
    Suppose first that $\{k_j\}_{j \in \N}$ is bounded. There must be an index $\bar k \in \N$ that appears infinitely many times in $\{k_j\}_{j \in \N}$. Consequently, the set $Y^{\bar k}(x^{\bar k + 1})$ is unbounded, a contradiction to Assumption 6.
    
    Suppose next that $\{k_j\}_{j \in \N}$ is unbounded. For some index set $N^\prime \in \N^\sharp_\infty$, we have $k_j \to +\infty$ as $j(\in N^\prime) \to +\infty$. Notice that the subsequence $\{x^{k_j}\}_{j \in N^\prime}$ is bounded since $\{x^k\}$ is bounded. By passing to a subsequence if necessary, we assume that $\{x^{k_j}\}_{j \in N^\prime}$ converges to some $\widetilde x$. Using epi-convergence in Assumption 1(c) and following the same procedure as in the proof of Theorem \ref{thm:main_result}, we can obtain that $\widetilde x \in \bigcap^m_{p=1} \dom F_p$. Then, by the  assumption in  (b), $\partial^\infty_A f_p(\widetilde x) = \{0\}$ for $p \in I_2$. Henceforth, there is a subsequence $\{x^{k_j}\}_{j \in N^\prime} \subset \{x^k\}$ converging to some $\widetilde x$ with a corresponding subsequence of multipliers $\{y^{k_j} \in Y^{k_j}(x^{k_j+1}) \}_{j \in N^\prime}$ such that $\|y^{k_j}\| \to +\infty$ as $j (\in N^\prime) \to +\infty$, which is a contradiction to the result of part (a).

    We have obtained contradictions for the two cases where $\{k_j\}_{j \in \N}$ is  bounded or unbounded. Then we can conclude that $\{Y^k(x^{k+1})\}$ is equi-bounded and the quantity $D$ defined in \eqref{eq:def_D} is finite. 
\end{proof}

After obtaining the equi-boundedness of the multipliers, we next introduce a relaxation of the weakly A-stationary point for preparation of the  termination criteria. For a proper and convex function $f$ and any $\beta > 0$, we denote $\partial^{\beta} f(\bar x) \triangleq \bigcup\{\partial f(x) \mid x \in \mathbb{B}(\bar x, \beta) \}$,  {which is related to the Goldstein's $\beta$-subdifferential \cite{goldstein1977optimization}.}

\gap
\begin{definition}
    Given any $\bar\eta > 0$, $\bar\beta > 0$ and $\bar k \in \N$, we say a point $x$ is a \textsl{$(\bar\eta, \bar\beta, \bar k)$-weakly A-stationary point} of problem \eqref{eq:cvx_composite} if there exists a nonnegative integer $k \geq \bar k$ such that
\[
    \operatorname{dist}\left(0,\, \sum^m_{p=1} \bigcup
    \left\{
    y_{p,1} \big[\partial^{\bar\beta}g^{k}_p(x) - \partial^{\bar\beta}h^{k}_p(x)\big]
    + y_{p,2} \big[\partial^{\bar\beta}g^{k}_p(x) - \partial^{\bar\beta}h^{k}_p(x) \big]
    \,\middle| \begin{array}{c}
    \, y_{p,1} \in \partial^{\bar\beta} \varphi^\uparrow_p (f^k_p(x)), \\
    y_{p,2} \in \partial^{\bar\beta} \varphi^\downarrow_p (f^k_p(x))
    \end{array}
    \right\}
    \right)
    \leq \bar\eta.
\]
\end{definition}

We remark that, if each outer function $\varphi_p$ is an identity function, i.e., $\varphi_p(t) = t$ for any $t \in \R$, and each inner function $f_p$ is DC rather than ADC, the above definition in the context of a DC program is independent of $k$ and says about nearness to a $\bar\eta$-critical point \cite[definition 2]{yao2022large}.
For nonsmooth optimization problem, similar definitions based on the idea of small nearby subgradients, together with the termination criteria, have appeared in the literature \cite{goldstein1977optimization, burke2005robust}.

The following proposition reveals the relationship between a $(\bar\eta, \bar\beta, \bar k)$-weakly A-stationary point and a weakly A-stationary point.

\begin{proposition}
\label{prop:equivalent_weak_A}
    Let $\bar x \in \bigcap^m_{p=1} \dom F_p$ be a feasible point of \eqref{eq:cvx_composite}. Suppose that Assumption 1 holds and $\partial^\infty_A f_p(\bar x) = \{0\}$ for each $p = 1, \cdots,m$. For any nonnegative sequence $(\eta_k, \beta_k) \downarrow 0$ and some index set $N \in \N^\sharp_\infty$, if each $x^k$ is a $(\eta_k, \beta_k, k)$-weakly A-stationary point of \eqref{eq:cvx_composite} for $k \in N$ and $x^k \to_{N} \bar x$, then $\bar x$ is a weakly A-stationary point of \eqref{eq:cvx_composite}.

    
\end{proposition}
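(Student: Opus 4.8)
The plan is to distil from the sequence of $(\eta_k,\beta_k,k)$-weakly A-stationary points a single family of limiting multipliers and approximate subgradients that certifies weak A-stationarity at $\bar x$. First I would unpack the definition: for each $k\in N$ there is an index $m_k\ge k$ together with multipliers $y^k_{p,1}\in\partial^{\beta_k}\varphi_p^\uparrow(f^{m_k}_p(x^k))$, $y^k_{p,2}\in\partial^{\beta_k}\varphi_p^\downarrow(f^{m_k}_p(x^k))$ and approximate subgradients $v^k_{p,1},v^k_{p,2}\in \partial^{\beta_k}g^{m_k}_p(x^k)-\partial^{\beta_k}h^{m_k}_p(x^k)$ with $\big\|\sum_{p=1}^m (y^k_{p,1}v^k_{p,1}+y^k_{p,2}v^k_{p,2})\big\|\le \eta_k+1/k$. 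Since $m_k\ge k\to\infty$, after passing to a subsequence of $N$ I may assume $\{m_k\}$ is strictly increasing; because each base point realizing these Goldstein-type subgradients lies in $\mathbb{B}(x^k,\beta_k)$ with $x^k\to_N\bar x$ and $\beta_k\downarrow 0$, every such base point converges to $\bar x$, and every argument of $\partial\varphi_p^\uparrow,\partial\varphi_p^\downarrow$ lies within $\beta_k$ of $f^{m_k}_p(x^k)$.

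Next I would pin down the limiting function values. By Assumption 1(b) the scalars $\{f^{m_k}_p(x^k)\}_{k\in N}$ are bounded, so along a further subsequence $f^{m_k}_p(x^k)\to_N \bar t_p$ for each $p$. To certify $\bar t_p\in T_p(\bar x)$, I would build a full sequence $\tilde x^j$ with $\tilde x^{m_k}=x^k$ for $k\in N$ and $\tilde x^j=\bar x$ otherwise; then $\tilde x^j\to\bar x$ while $f^{m_k}_p(\tilde x^{m_k})\to_N\bar t_p$, which is exactly the defining property of $T_p(\bar x)$ in \eqref{def:T_p}. The arguments of $\partial\varphi_p^\uparrow$ and $\partial\varphi_p^\downarrow$ then also converge to $\bar t_p$.

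The heart of the argument is a boundedness-and-limit analysis paralleling the treatment of \eqref{eq:horizon2normal} in the proof of Theorem \ref{thm:necessary_cond}. I would first show each $\{v^k_{p,1}\}_{k\in N},\{v^k_{p,2}\}_{k\in N}$ is bounded: otherwise a normalized unbounded subsequence would yield a nonzero accumulation direction forced into $\partial^\infty_A f_p(\bar x)=\{0\}$, a contradiction. The subtle point, which I expect to be the main obstacle, is that an element of $\partial^{\beta_k}g^{m_k}_p(x^k)-\partial^{\beta_k}h^{m_k}_p(x^k)$ is a difference of a $g^{m_k}_p$-subgradient and an $h^{m_k}_p$-subgradient taken at possibly distinct base points inside $\mathbb{B}(x^k,\beta_k)$, whereas both $\partial_A f_p(\bar x)$ and $\partial^\infty_A f_p(\bar x)$ are built from common-base-point differences; the natural route is to exploit $\beta_k\downarrow0$ so that the two base points collapse to the single limit $\bar x$, after which any accumulation point of $\{v^k_{p,\cdot}\}$ lands in $\partial_A f_p(\bar x)$ and any horizon direction lands in $\partial^\infty_A f_p(\bar x)$. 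Granting boundedness of the $v$'s, I would bound the multipliers by the dichotomy of \eqref{eq:horizon2normal}: were some $\{y^k_{p,1}\}$ unbounded, its normalized limit would be a nonzero element of $\mathcal N_{\dom\varphi_p^\uparrow}(\bar t_p)$ paired with $0\in\partial_A f_p(\bar x)$, and since $\partial^\infty_A f_p(\bar x)=\{0\}$ rules out compensating horizon directions, this situation forces the multipliers to be bounded after all — here I would use that the univariate convex $\varphi_p^\uparrow,\varphi_p^\downarrow$ are continuous relative to their domains, cf. \cite[Theorem 10.2]{rockafellar1970convex}, and that $\bar x\in\dom F_p$.

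Finally, passing to a common subsequence over the finitely many indices $p$, I obtain $y^k_{p,1}\to\bar y_{p,1}$, $y^k_{p,2}\to\bar y_{p,2}$, $v^k_{p,1}\to\bar v_p$, $v^k_{p,2}\to\bar v'_p$. Outer semicontinuity of the convex subdifferentials gives $\bar y_{p,1}\in\partial\varphi^\uparrow_p(\bar t_p)$ and $\bar y_{p,2}\in\partial\varphi^\downarrow_p(\bar t_p)$, while the base-point collapse gives $\bar v_p,\bar v'_p\in\partial_A f_p(\bar x)$. Taking the limit in the defining inequality yields $0=\sum_{p=1}^m(\bar y_{p,1}\bar v_p+\bar y_{p,2}\bar v'_p)\in\sum_{p=1}^m\big\{\bar y_{p,1}\,\partial_A f_p(\bar x)+\bar y_{p,2}\,\partial_A f_p(\bar x)\big\}$; since $\partial^\infty_A f_p(\bar x)=\{0\}$ makes the horizon component $\pm\partial^\infty_A f_p(\bar x)\backslash\{0\}$ empty, this is exactly the definition of a weakly A-stationary point, completing the argument.
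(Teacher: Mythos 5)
Your route is the same as the paper's: extract $f^{m_k}_p(x^k)\to\bar t_p\in T_p(\bar x)$ via Assumption 1(b), use outer semicontinuity of the convex subdifferentials to land the limiting multipliers in $\partial\varphi^\uparrow_p(\bar t_p)$ and $\partial\varphi^\downarrow_p(\bar t_p)$, and pass to the limit in the approximate stationarity condition. You are in fact more explicit than the paper about the index $m_k\ge k$ hidden in the definition and about why the sequences $\{v^k_{p,\cdot}\}_{k\in N}$ must be bounded (via $\partial^\infty_A f_p(\bar x)=\{0\}$); the paper compresses all of this into ``taking an outer limit of the subdifferentials involved.''

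The gap sits exactly at the step you yourself flag as the main obstacle, and your proposed fix does not close it. Knowing that both base points of a Goldstein-type difference converge to $\bar x$ does not place the limit of $\partial g^{m_k}_p(z^k)-\partial h^{m_k}_p(\zeta^k)$ with $z^k\neq\zeta^k$ inside $\partial_A f_p(\bar x)$, which is generated only by differences taken at a \emph{common} base point. A one-dimensional counterexample to the inclusion you need: let $g^k(x)=|x-\beta_k/2|$, $h^k(x)=|x+\beta_k/2|$, $x^k=0$. Every common-base-point difference $\partial g^k(y)-\partial h^k(y)$ lies in $[-2,0]$, so $\partial_A f(0)\subset[-2,0]$; yet $2=1-(-1)\in\partial^{\beta_k}g^k(0)-\partial^{\beta_k}h^k(0)$ for every $k$, so $\Limsup_{k\to+\infty}\big[\partial^{\beta_k}g^k(x^k)-\partial^{\beta_k}h^k(x^k)\big]\not\subset\partial_A f(0)$ even though both base points $\pm\beta_k/2$ collapse to $0$. (The paper's own proof asserts this inclusion by a bare appeal to outer semicontinuity, so you have put your finger on a genuine soft spot; to make the step rigorous one must either read $\partial^{\bar\beta}g^k_p(x)-\partial^{\bar\beta}h^k_p(x)$ as $\bigcup\{\partial g^k_p(z)-\partial h^k_p(z)\mid z\in\mathbb{B}(x,\bar\beta)\}$ --- which is all that the algorithm actually produces in Proposition \ref{prop:Termination} --- or control the base-point mismatch through Assumption 3 together with $\ell_k\beta_k\to0$.) A secondary weak point: your boundedness argument for $\{y^k_{p,1}\}$ is not conclusive when $\varphi^\uparrow_p$ is an indicator, since this proposition assumes no constraint qualification and $\partial^\infty_A f_p(\bar x)=\{0\}$ alone does not exclude unbounded multipliers that cancel across $p$; for real-valued $\varphi^\uparrow_p$ the boundedness you want follows directly from \cite[Theorem 24.7]{rockafellar1970convex} once the arguments converge to $\bar t_p$.
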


\begin{proof}
    By Assumption 1(b), the subsequence $\{ f^k_p(x^k) \}_{k \in N}$ is bounded for each $p$. Then, there is an index set $N^\prime (\subset N) \in N^\sharp_\infty$ such that $\{f^k_p(x^k)\}_{k \in N^\prime}$ converges to some $\bar t_p \in T_p(\bar x)$ for each $p$. Using the outer semicontinuity of the subdifferential mapping of a convex function, we have
    \[
        \Limsup_{k (\in N^\prime) \to +\infty} \; \partial^{\beta_k} \varphi^\uparrow_p(f^k_p(x^k)) \subset \partial\varphi^\uparrow_p(\bar t_p), \qquad
        \Limsup_{k (\in N^\prime) \to +\infty} \; \partial^{\beta_k} \varphi^\downarrow_p(f^k_p(x^k)) \subset \partial\varphi^\downarrow_p(\bar t_p)
    \]
    and
    \[
        \Limsup_{k (\in N^\prime) \to +\infty} \big[\partial^{\beta_k} g^k_p(x^k) - \partial^{\beta_k} h^k_p(x^k)\big] \subset \partial_A f_p(\bar x).
    \]
    Thus, by taking an outer limit of the subdifferentials involved in the condition that $x^k$ is $(\eta_k, \beta_k, k)$-weakly A-stationary for all $k \in N$, we know that $\bar x$ is a weakly A-stationary point of \eqref{eq:cvx_composite}. 
\end{proof}

We conclude this section with our main result on the termination criteria. 

\begin{proposition}[termination criteria]
\label{prop:Termination}
    Suppose that Assumptions 1-6 hold. Let $\{x^k\}$ be the sequence generated by the prox-ADC method. Suppose that $\{x^k\}$ is bounded and $\partial^\infty_A f_p(x) = \{0\}$ for any $x \in \bigcap^m_{p=1} \dom F_p$ and $p \in I_2$. For any $\bar\eta > 0$, $\bar\beta > 0$ and $\bar k \in \N$, there exists a nonnegative integer $k_0 \geq \bar k$ such that 
    \begin{equation}\label{eq:Termination}
        \max\limits_{1 \leq p \leq m} \sum\limits^{+\infty}_{k^\prime=k_0} \widehat{\alpha^{k^\prime}_p} + \epsilon_{k_0} \leq \bar\beta,
        \quad
        \frac{\delta_{k_0}} {\lambda + \ell_{k_0}} \leq \bar\beta,
        \quad
        \delta_{k_0} \leq \bar\eta.
    \end{equation}
   Consequently, $x^{k_0, i_{k_0}+1}$ is a $\left(\bar\eta \cdot \max\{1, \sqrt{2m} D\}, \bar\beta, \bar k\right)$-weakly A-stationary point of problem \eqref{eq:cvx_composite_constraints}, where $D$ is the constant defined in \eqref{eq:def_D}.
\end{proposition}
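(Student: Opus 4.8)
The plan is to separate the easy existence of the index $k_0$ from the substantive verification that $x^{k_0,i_{k_0}+1}$ meets the $\left(\bar\eta\cdot\max\{1,\sqrt{2m}D\},\bar\beta,\bar k\right)$-weakly A-stationary definition, using $k_0$ itself as the witnessing index. For existence, I would note that each series $\sum_{k^\prime}\widehat{\alpha^{k^\prime}_p}$ converges by Assumption 2, so its tail $\sum_{k^\prime\ge k_0}\widehat{\alpha^{k^\prime}_p}\to 0$; taking the finite maximum over $p$ and adding $\epsilon_{k_0}\downarrow 0$ drives the first quantity in \eqref{eq:Termination} to zero, while $\delta_{k_0}/(\lambda+\ell_{k_0})\downarrow 0$ and $\delta_{k_0}\downarrow 0$ handle the remaining two. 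Hence all three inequalities hold for some $k_0\ge\bar k$. By Theorem \ref{thm:converge1}(b) the inner loop terminates in finitely many steps at this $k_0$, so \eqref{eq:inner_stopping} holds at $x^{k_0,i_{k_0}+1}$; by Assumption 6 and the nonsmooth Lagrange multiplier rule the set $Y^{k_0}(x^{k_0+1})$ is nonempty, so I fix one multiplier in it and invoke Proposition \ref{prop:equi-bounded}(b) to bound its norm by $D$, whence $\sum_p(|y^{k_0}_{p,1}|+|y^{k_0}_{p,2}|)=\|y^{k_0}\|_1\le\sqrt{2m}\,D$.

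Next I would establish the two families of subdifferential memberships required by the definition, all centered at $x\triangleq x^{k_0,i_{k_0}+1}$. For the outer multipliers, combining $f^{k_0,\text{upper}}_p(x;x^{k_0,i_{k_0}})\ge f^{k_0}_p(x)$ from \eqref{eq:f_upper} with the first line of \eqref{eq:inner_stopping} gives $0\le f^{k_0,\text{upper}}_p(x;x^{k_0,i_{k_0}})-f^{k_0}_p(x)\le\sum_{k^\prime\ge k_0}\widehat{\alpha^{k^\prime}_p}+\epsilon_{k_0}\le\bar\beta$ by the first inequality of \eqref{eq:Termination}; since $y^{k_0}_{p,1}\in\partial\varphi^{\uparrow}_p(f^{k_0,\text{upper}}_p(x;x^{k_0,i_{k_0}}))$, this places $y^{k_0}_{p,1}\in\partial^{\bar\beta}\varphi^{\uparrow}_p(f^{k_0}_p(x))$. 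Symmetrically, \eqref{eq:f_lower} and the second line of \eqref{eq:inner_stopping} yield $0\le f^{k_0}_p(x)-f^{k_0,\text{lower}}_p(x;x^{k_0,i_{k_0}})\le\epsilon_{k_0}\le\bar\beta$ for $p\in I_2$, so $y^{k_0}_{p,2}\in\partial^{\bar\beta}\varphi^{\downarrow}_p(f^{k_0}_p(x))$; for $p\in I_1$ this is trivial because $\varphi^{\downarrow}_p\equiv 0$ and $y^{k_0}_{p,2}=0$.

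Then I would handle the inner DC part through \eqref{eq:main_converge_error}, which the proof of Lemma \ref{lem:CQ_asymptotic}(b) extracts from the exact subproblem optimality condition \eqref{eq:inner_optimality} using Assumption 3: there exist $v^{k_0}_{p,1},v^{k_0}_{p,2}$, each lying in $\partial g^{k_0}_p(\cdot)-\partial h^{k_0}_p(\cdot)$ read off at a single iterate from $\{x^{k_0,i_{k_0}},x^{k_0,i_{k_0}+1}\}$, with $\big\|\sum_p(y^{k_0}_{p,1}v^{k_0}_{p,1}+y^{k_0}_{p,2}v^{k_0}_{p,2})\big\|\le\max\{1,\sum_p(|y^{k_0}_{p,1}|+|y^{k_0}_{p,2}|)\}\,\delta_{k_0}$. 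Because $\|x^{k_0,i_{k_0}}-x^{k_0,i_{k_0}+1}\|\le\delta_{k_0}/(\lambda+\ell_{k_0})\le\bar\beta$ by \eqref{eq:inner_stopping} and the second inequality of \eqref{eq:Termination}, each such evaluation point lies in $\mathbb{B}(x,\bar\beta)$, so $v^{k_0}_{p,1},v^{k_0}_{p,2}\in\partial^{\bar\beta}g^{k_0}_p(x)-\partial^{\bar\beta}h^{k_0}_p(x)$. Combined with the outer memberships, the vector $\sum_p(y^{k_0}_{p,1}v^{k_0}_{p,1}+y^{k_0}_{p,2}v^{k_0}_{p,2})$ belongs to the set inside the distance of the weakly A-stationary definition, and bounding its norm by $\max\{1,\sqrt{2m}D\}\,\delta_{k_0}\le\max\{1,\sqrt{2m}D\}\,\bar\eta$ (using $\sum_p(|y^{k_0}_{p,1}|+|y^{k_0}_{p,2}|)\le\sqrt{2m}D$ and $\delta_{k_0}\le\bar\eta$) delivers the claim.

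The hard part will be the bookkeeping in the last paragraph: one must confirm that the subgradients produced by \eqref{eq:main_converge_error} — where $g$ and $h$ are read at a common iterate so that the Assumption 3 Lipschitz replacement is applicable — are precisely the objects admissible in the Goldstein structure $y_{p,1}[\partial^{\bar\beta}g^{k_0}_p(x)-\partial^{\bar\beta}h^{k_0}_p(x)]+y_{p,2}[\partial^{\bar\beta}g^{k_0}_p(x)-\partial^{\bar\beta}h^{k_0}_p(x)]$ of the definition, and to track how the factor $\max\{1,\sqrt{2m}D\}$ arises jointly from the Lipschitz error of Assumption 3 and the equi-bound $D$ of Proposition \ref{prop:equi-bounded}(b). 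The existence of $k_0$ and the outer-multiplier memberships are routine once the gap estimates controlled by $\sum_{k^\prime\ge k_0}\widehat{\alpha^{k^\prime}_p}$ and $\epsilon_{k_0}$ are in hand.
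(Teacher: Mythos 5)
Your proposal is correct and follows essentially the same route as the paper's proof: existence of $k_0$ from summability of $\{\widehat{\alpha^{k'}_p}\}$ and monotonicity of $(\epsilon_k,\delta_k,\delta_k/(\lambda+\ell_k))$, the approximate optimality estimate \eqref{eq:main_converge_error} from the proof of Lemma~\ref{lem:CQ_asymptotic}, the equi-bound $D$ from Proposition~\ref{prop:equi-bounded}(b) giving $\sum_p(|y^{k_0}_{p,1}|+|y^{k_0}_{p,2}|)\le\sqrt{2m}\,D$, and the Goldstein-subdifferential memberships via the gap bounds supplied by \eqref{eq:inner_stopping} and \eqref{eq:Termination}. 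The only difference is presentational: you spell out the $\partial^{\bar\beta}$ membership checks that the paper compresses into its displayed computation of $\beta$ and $\eta$.
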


\begin{proof}
    The existence of $k_0 \geq \bar k$ satisfying \eqref{eq:Termination} is a direct consequence of $\sum^{+\infty}_{k^\prime = 0} \widehat{\alpha^{k^\prime}_p} < +\infty$, $(\epsilon_k, \delta_k) \downarrow 0$, and $\delta_k / (\lambda + \ell_k) \downarrow 0$. 
    Recalling \eqref{eq:main_converge_error} in the proof of {Lemma \ref{lem:CQ_asymptotic}}, at the $k_0$-th outer iteration, we have
\[
\left\{\begin{array}{l}
    \displaystyle\left\| \sum_{p=1}^{m} \big(y^{k_0}_{p,1} \, v^{k_0}_{p,1} + y^{k_0}_{p,2} \, v^{k_0}_{p,2}\big) \right\| \leq \max\left\{ 1, \sum\limits_{p=1}^{m} \big(|y^{k_0}_{p,1}| + |y^{k_0}_{p,2}|\big)\right\} \delta_{k_0}, \\[0.25in]
	\left[
	\begin{array}{cc}
	v^{k_0}_{p,1} \in \partial g^{k_0}_p(x^{k_0, i_{k_0}}) - \partial h^{k_0}_p(x^{k_0, i_{k_0}})\\[0.1in]
	v^{k_0}_{p,2} \in \partial g^{k_0}_p(x^{k_0, i_{k_0} + 1}) - \partial h^{k_0}_p(x^{k_0, i_{k_0} + 1})
	\end{array}
	\right]
	\text{or}
	\left[
	\begin{array}{cc}
	v^{k_0}_{p,1} \in \partial g^{k_0}_p(x^{k_0, i_{k_0} + 1}) - \partial h^{k_0}_p(x^{k_0, i_{k_0} + 1})\\[0.1in]
	v^{k_0}_{p,2} \in \partial g^{k_0}_p(x^{k_0, i_{k_0}}) - \partial h^{k_0}_p(x^{k_0, i_{k_0}})
	\end{array}
	\right],\, p=1, \cdots, m,
\end{array}\right.
\]
where $y^{k_0}_{p,1} \in \partial\varphi^{\uparrow}_p( f^{k_0,\text{upper}}_p(x^{k_0, i_{k_0}+1}; x^{k_0, i_{k_0}}))$, $y^{k_0}_{p,2} \in \partial\varphi^{\downarrow}_p( f^{k_0,\text{lower}}_p(x^{k_0, i_{k_0}+1}; x^{k_0, i_{k_0}}))$ for $p=1, \cdots, m$. Thus, at the point $x^\ast = x^{k_0, i_{k_0} + 1}$, we have
\[
    \operatorname{dist}\left(0,\, \sum^m_{p=1} \bigcup
    \left\{
    y_{p,1} \big[\partial^{\beta}g^{k_0}_p(x^\ast) - \partial^{\beta}h^{k_0}_p(x^\ast)\big]
    + y_{p,2} \big[\partial^{\beta}g^{k_0}_p(x^\ast) - \partial^{\beta}h^{k_0}_p(x^\ast) \big]
    \,\middle| \begin{array}{c}
    \, y_{p,1} \in \partial^{\beta} \varphi^\uparrow_p (f^{k_0}_p(x^\ast)), \\
    y_{p,2} \in \partial^{\beta} \varphi^\downarrow_p (f^{k_0}_p(x^\ast))
    \end{array}
    \right\}
    \right)
    \leq \eta,
\]
where the parameters $\beta$ and $\eta$ are given by
\[
\begin{array}{l}
    \beta = \max\left\{\begin{array}{c}
        \max\limits_{1 \leq p \leq m} \left[f^{k_0,\text{upper}}_p(x^\ast\,; x^{k_0, i_{k_0}}) - f^{k_0}_p(x^\ast) \right],\\
        \max\limits_{1 \leq p \leq m}
        \left[ f^{k_0}_p(x^\ast) - f^{k_0,\text{lower}}_p(x^\ast\,; x^{k_0, i_{k_0}})\right],\\
        \|x^\ast - x^{k_0, i_{k_0}}\|
        \end{array}\right\}
        \overset{\eqref{eq:inner_stopping}}{\leq} \max\left\{\begin{array}{c}
            \max\limits_{1 \leq p \leq m} \sum\limits^{+\infty}_{k^\prime=k_0} \widehat{\alpha^{k^\prime}_p} + \epsilon_{k_0},\\[0.15in]
        \delta_{k_0} / (\lambda + \ell_{k_0}) \end{array}\right\}
        \overset{\eqref{eq:Termination}}{\leq} \bar\beta, \\[0.4in]
    \eta =  {\max\left\{ 1, \sum\limits_{p=1}^{m} \big(|y^{k_0}_{p,1}| + |y^{k_0}_{p,2}|\big)\right\} \delta_{k_0}}
    \overset{\text{Proposition } \ref{prop:equi-bounded}}{\leq} \max\{1,\, \sqrt{2m} D\} \cdot \delta_{k_0}
    \overset{\eqref{eq:Termination}}{\leq} \bar\eta \cdot \max\{1,\, \sqrt{2m} D\}.
\end{array}
\]
Henceforth, for $k_0$ satisfying \eqref{eq:Termination}, $x^\ast = x^{k_0,i_{k_0}+1}$ is a $\left(\bar\eta \cdot \max\{1,\, \sqrt{2m} D\}, \bar\beta, \bar k\right)$-weakly A-stationary point of problem \eqref{eq:cvx_composite_constraints}. 
\end{proof}

\section{Numerical {examples}.}
\label{sec:numerical}
We present some preliminary experiments to illustrate the performance of our algorithm on the inverse optimal value optimization with or without constraints. The first experiment aims to demonstrate the practical performance of the prox-ADC method under the termination criteria in section \ref{subsec:terminate}, by varying different approximating sequences and initial points. To demonstrate the computation of ADC constrained problems, especially the choice of the quantity $\widehat{\alpha^k_p}$ and a feasible initial point in Assumption 2, we further consider the constrained inverse optimal value optimization. These experiments were tested on a MacBook Air laptop with an Apple M1 chip and 16GB of memory using Julia 1.10.2.
}

\subsection{Inverse optimal value optimization with simple constraints.}
\label{subsec:Ex1}
{
Based on the setting in \eqref{eq:inverse}, we aim to find a vector $x \in [-1, 1]^n$ to minimize the errors between the observed optimal values ${\{\nu_p\}^m_{p=1}}$ and true optimal values $\{f_p(x)\}^m_{p=1}$:
\begin{equation}\label{numerical1}
    \Min_{x \in [-1, 1]^n} \;F(x) \triangleq \sum_{p=1}^{m} \left|{\nu_p} - f_p(x)\right|,
\end{equation}
where each $f_p$ is the optimal value function as defined in \eqref{eq:value_func}. We fix $n = 10$, $m = 11$, $d = 10$, and the number of inequality constraints $\ell = 5$ in the minimization problem \eqref{eq:value_func}. Vectors $b^{\,p}$ and $c^{\,p}$, and matrices $A^{\,p}, B^{\,p}, C^{\,p}$ are randomly generated with each entry independent and normally distributed with mean $\mu=0$ and variance $\sigma=1$. For numerical stability, we then normalize matrices $C^{\,p}$ and $A^{\,p}$ by a factor of $\sqrt{n}$. We also generate a positive definite matrix $Q^{\, p}$ and a random solution $x^\ast = u /\|u\|$ with $u \sim \mbox{Normal}(0, I_n)$. We set ${\nu_p} = f_p(x^\ast)$ for each $p$ and, therefore, $F(x^\ast) = 0$.

We adopt the ADC decomposition in \eqref{eq:PME}, denoted by $f^k_p = (g_p)_{\gamma_k} - (h_p)_{\gamma_k}$ with a sequence $\{\gamma_k = 1/ (k+1)^\rho\}$ for some exponent $\rho > 0$. Consequently, $\ell_k = 1 / \gamma_k = (k+1)^\rho$. We apply the prox-ADC algorithm to solve this example with $\epsilon_k = \delta_k = 1 / (k+1)^\rho$ and $\lambda = 5$. In this example, the strongly convex subproblem \eqref{eq:subproblem} can be easily reformulated to a problem with linear objective and convex quadratic constraints, which is  solved by Gurobi in our experiments.

We first investigate the performance of our algorithm under the termination criteria with different values of parameters. Figure \ref{figEx1:varying_terminate} displays the logarithm of the objective values against the number of outer iterations and the total number of inner iterations. We mark three different points on the curve where the termination criteria \eqref{eq:Termination} with $\bar\eta = \bar\beta = 10^{-1}, 10^{-2}, 10^{-3}$ and $\bar k = 10, 20, 40$ are satisfied. 

\begin{figure}[H]
    \begin{subfigure}
    \centering
    \includegraphics[scale=0.38]{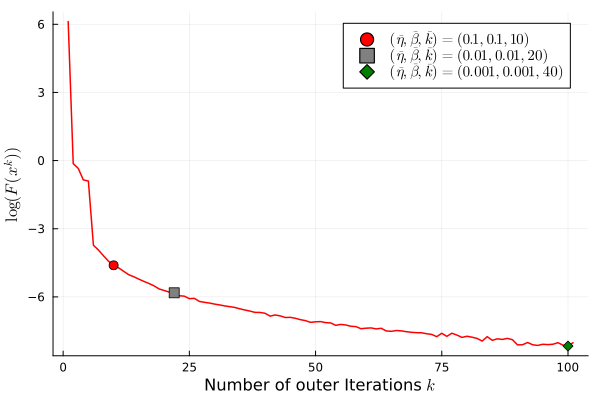}
    \end{subfigure}
    \begin{subfigure}
    \centering
    \includegraphics[scale=0.38]{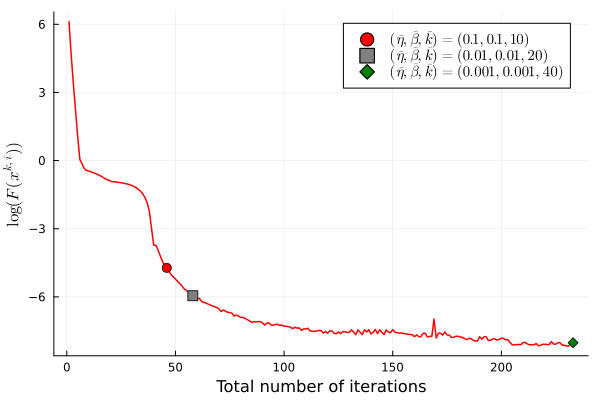}
    \end{subfigure}
    \caption{Performance of the prox-ADC method for problem \eqref{numerical1}, under the termination criteria \eqref{eq:Termination} with $\bar\eta = \bar\beta = 10^{-1}, 10^{-2}, 10^{-3}$ and $\bar k = 10, 20, 40$, for a fixed exponent $\rho = 1.5$ and a fixed initial point.}
    \label{figEx1:varying_terminate}
\end{figure}

We have also experimented with various values of exponent $\rho$ that determine the convergence rate of the approximating sequence and various initial points. In both cases, we terminate the algorithm under the conditions \eqref{eq:Termination} with $\bar\eta = \bar\beta = 10^{-2}$ and $\bar k = 10$. In Figure \ref{figEx1:varying_exponent}, we observe that setting different values of $\rho$ under the same termination criteria leads to candidate solutions with similar objective values, and there are roughly two phases of convergence in terms of the total number of iterations. Initially, the objective value decreases faster for smaller $\rho$, corresponding to poorer approximation. When the objective value is sufficiently small ($10^{-3}$ on this particular instance), larger $\rho$ results in faster convergence to high accuracy. We remark that for $\rho = 0.5$, the algorithm reaches the maximum number of outer iterations and does not output a $(10^{-2}, 10^{-2}, 10)$-weakly A-stationary point. 
Figure \ref{figEx1:varying_initials} demonstrates the influence of using various initial points that are uniformly distributed on $[-1, 1]^n$. On this instance, two of the initial points find $(10^{-2}, 10^{-2}, 10)$-weakly A-stationary points with large objective values. For these two initial points, we rerun the algorithm with $\bar\eta = \bar\beta = 10^{-3}$, and the algorithm still terminates with large objective values.



\begin{figure}[H]
    \begin{subfigure}
    \centering
    \includegraphics[scale=0.38]{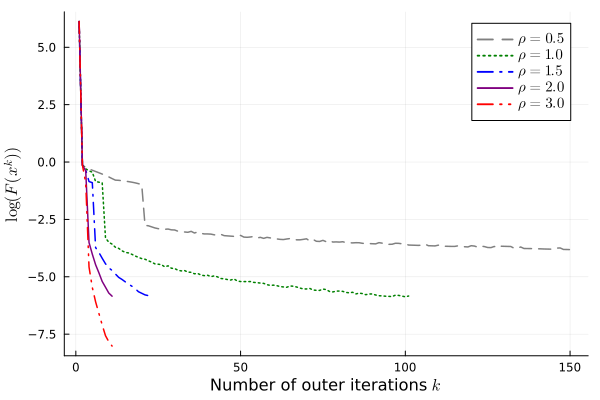}
    \end{subfigure}
    \begin{subfigure}
    \centering
    \includegraphics[scale=0.38]{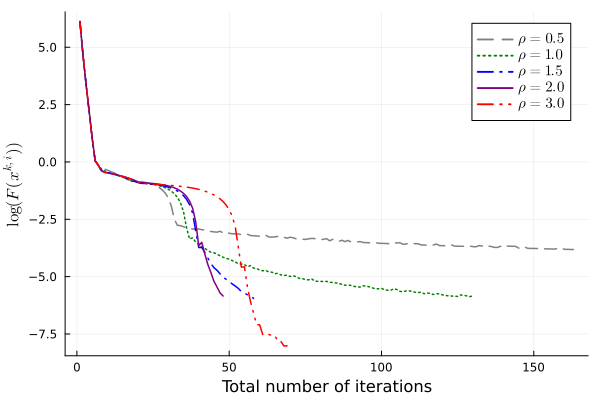}
    \end{subfigure}
    \caption{Performance of the prox-ADC method for problem \eqref{numerical1} using different values of exponent $\rho$, under the termination criteria \eqref{eq:Termination} with $\bar\eta = \bar\beta = 10^{-2}$ and $\bar k = 10$, for a fixed initial point.}
    \label{figEx1:varying_exponent}
\end{figure}

\begin{figure}[H]
    \begin{subfigure}
    \centering
    \includegraphics[scale=0.38]{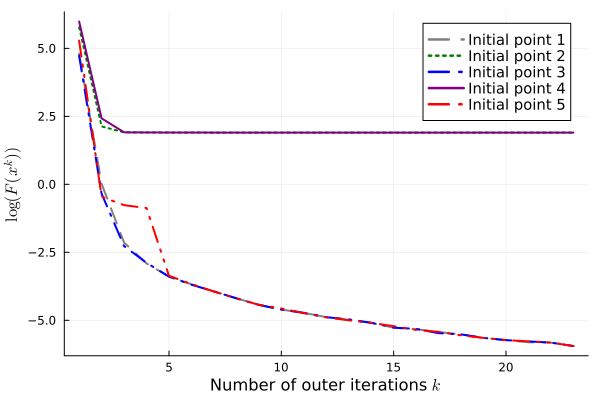}
    \end{subfigure}
    \begin{subfigure}
    \centering
    \includegraphics[scale=0.38]{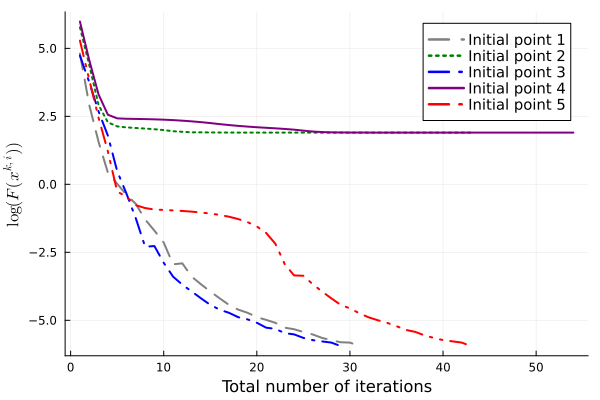}
    \end{subfigure}
    \caption{Performance of the prox-ADC method  for problem \eqref{numerical1} using five initial points uniformly distributed on $[-1, 1]^n$, under the termination criteria \eqref{eq:Termination} with $\bar\eta = \bar\beta = 10^{-2}$ and $\bar k = 10$, for a fixed exponent $\rho = 1.5$.}
    \label{figEx1:varying_initials}
\end{figure}

\subsection{Inverse optimal value optimization with ADC constraints.}
\label{subsec:Ex2}
We consider a variant of the inverse optimal value optimization that is defined as follows:
\begin{equation}
\label{eq:numerical_Ex2}
\begin{array}{cl}
    \Min_{x \in [-1,1]^n} &\;\, \displaystyle F(x) = \sum^{m_1}_{p=1} \left| {\nu_p} - f_p(x) \right| \\[0.2in]
    \mbox{subject to} &\;\, \displaystyle\frac{{\nu_p} - f_p(x)}{\max\{1, |{\nu_p}|\}} \leq \varepsilon,\quad
    \frac{f_p(x) - {\nu_p}}{\max\{1, |{\nu_p}|\}} \leq \varepsilon,
    \quad p=m_1+1, \cdots, m.
\end{array}
\end{equation}
In this formulation, the observations of the optimal values ${\{\nu_p\}^m_{p=1}}$ are divided into two groups indexed by $\{1, \cdots, m_1\}$ and $\{m_1+1, \cdots, m\}$. We aim to minimize the errors for the first group while ensuring the relative errors for the second group do not exceed a specified feasibility tolerance, denoted by $\varepsilon$. In our experiment, we fix $n = 10$, $m = 11$, $m_1 = 8$, $\varepsilon = 10^{-1}$, $d = 10$, and the number of inequality constraints $\ell = 5$ in the minimization problem \eqref{eq:value_func}. The solution $x^\ast$ and the data, including ${\{\nu_p\}^m_{p=1}}$, are randomly generated in the same way as in section \ref{subsec:Ex1}. We can see that $x^\ast$ is feasible to \eqref{eq:numerical_Ex2} and attains the minimal objective value $F(x^\ast) = 0$.  

Similar to the first example, we adopt the ADC decomposition in \eqref{eq:PME}, denoted by $f^k_p = (g_p)_{\gamma_k} - (h_p)_{\gamma_k}$ with a sequence $\{\gamma_k = 1/ (k + \tilde k)^\rho\}$ for some positive integer $\tilde k$ and $\rho > 0$. Due to the feasibility problem in Assumption 2, we introduce the additional parameter $\tilde k$ to control the approximating sequences, which will be explained in details later. We also note that treating $\frac{{\nu_p} - f_p(x)}{\max\{1, |{\nu_p}|\}} \leq \varepsilon$ and $\frac{f_p(x) - {\nu_p}}{\max\{1, |{\nu_p}|\}} \leq \varepsilon$ as two separate constraints for $p={m_1+1},\cdots,m$  leads to the failure of the asymptotic constraint qualification in {Assumption 5} because the approximate subdifferentials of the ADC functions $f_p$ and $-f_p$ are linearly dependent. This issue can be resolved by rewriting the constraints in a composite ADC form $\frac{|{\nu_p} - f_p(x)|}{\max\{1, |{\nu_p}|\}} \leq \varepsilon$ and assuming a corresponding version of {Assumption 5}. We omit this technical detail since the main focus of this section is to illustrate the practical implementation of our algorithm.

To verify Assumption 2 that states the existence of a strictly feasible point, we first follow the discussion after Assumption 2 to construct the quantity $\widehat{\alpha^k_p} = (\gamma_k - \gamma_{k+1}) L_p^2 / (2 \max\{1, |{\nu_p}|\})$ where $L_p$ is the Lipschitz constant of $\overline f_p(\cdot, x)$ for all $x \in [-1, 1]^n$. We can derive the Lipschitz constant $L_p$ by characterizing the subdifferential $\partial_1 \overline{f_p}(\cdot, x)$ for a fixed $x$ based on Danskin's Theorem \cite[Theorem 2.1]{clarke1975generalized} and then upper bounding the norm of this subdifferential over $x \in [-1, 1]^n$. The extra denominator $\max\{1, |{\nu_p}|\}$ in the expression of $\widehat{\alpha^k_p}$ is due to the scaling of the constraints in \eqref{eq:numerical_Ex2}. Then, consider the following problem:
\begin{equation}
\label{eq:feasibility}
   \Min_{x \in [-1, 1]^n} \;V(x) \triangleq \sum^m_{p=m_1+1} \max\Bigg\{0,\, \big|{\nu_p} - f^0_p(x)\big| - \underbrace{\left(\varepsilon - \sum^{+\infty}_{k^\prime=0} \widehat{\alpha^{k^\prime}_p}\right) \max\{1, |{\nu_p}|\}}_{\triangleq\, s_p \,(\text{constant})} \Bigg\},
\tag{Feas}
\end{equation}
where the objective is the sum of the compositions of univariate convex functions $\varphi_p (t) = \max\{0, |{\nu_p} - t| - s_p\}$ and DC functions $f^0_p$. Notice that problem \eqref{eq:feasibility} takes the same form as \eqref{eq:cvx_composite_perturb}. Thus, we can apply the inner loop of the prox-ADC method to solve it approximately. If solving this problem gives a solution $x^0$ with $ V(x^0) = 0$, then
\begin{equation}
\label{eq:strict_feasible}
    \frac{|{\nu_p} - f^0_p(x)|}{\max\{1, |{\nu_p}|\}} \leq \varepsilon - \sum^{+\infty}_{k^\prime=0} \widehat{\alpha^{k^\prime}_p}
    = \varepsilon - \frac{(L_p)^2}{2 \, \tilde k^{\rho} \cdot \max\{1, |{\nu_p}|\}},
\end{equation}
and $x^0$ is a strictly feasible point satisfying Assumption 2. We emphasize that using the inner loop of the prox-ADC method for solving problem \eqref{eq:feasibility} to obtain a strictly feasible point is merely a heuristic. Although this approach works well in our experiments, it is generally not easy to verify Assumption 2. We make a final remark on the role of $\tilde k$. For small values of $\rho$ and $\tilde k$, it is possible that $\varepsilon < \frac{(L_p)^2}{2 \, \tilde k^\rho \cdot \max\{1, |{\nu_p}|\}}$, and, from \eqref{eq:strict_feasible}, there is no strictly feasible point satisfying Assumption 2 for this fixed approximating sequence. Henceforth, the flexibility of the parameter ${\tilde k}$ is necessary to ensure the validity of Assumption 2.

We implement the above procedure to find an initial point and then apply the prox-ADC method with $\epsilon_k = \delta_k = 1 / (k+1)^\rho$ and $\lambda = 5$. On most of the randomly generated instances, we observe that the point given by solving \eqref{eq:feasibility} is also feasible to the original problem \eqref{eq:numerical_Ex2} along the iterations, although this result cannot be implied by Assumption 2. 
In Figure \ref{figEx2}, we again plot the logarithm of the objective values against the total number of iterations, using various combination of $\tilde k$ and $\rho$ and various initial points. It is worth mentioning that for this constrained problem, the random initial point is not directly utilized in the prox-ADC method. Instead, it is first used in problem \eqref{eq:feasibility} to generate a strictly feasible point satisfying Assumption 2, and the candidate solution for \eqref{eq:feasibility} then becomes the initial point of the prox-ADC method for solving \eqref{eq:numerical_Ex2}.
}

\begin{figure}[H]
    \subfigure[\,\footnotesize Varying sequences $\{\gamma_k\}$.]{
    \begin{minipage}{.49\textwidth}
    \centering
    \includegraphics[scale=0.38]{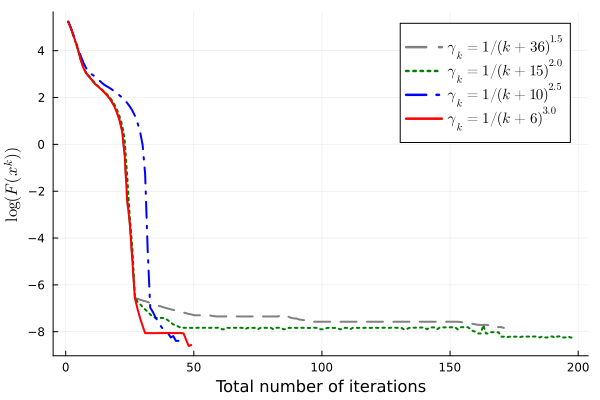}
    \end{minipage}}
    \subfigure[\,\footnotesize Varying initial points.]{
    \begin{minipage}{.49\textwidth}
    \centering
    \includegraphics[scale=0.38]{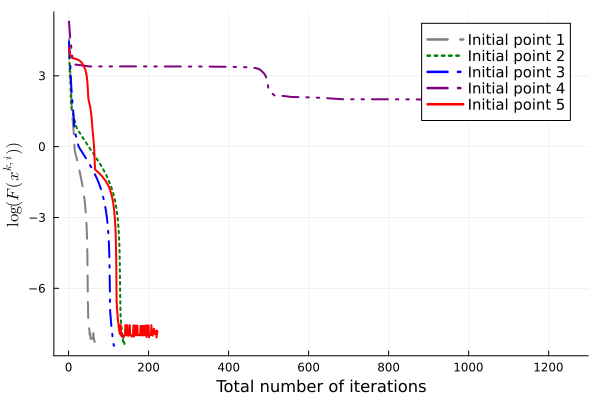}
    \end{minipage}}
    \caption{Performance of the prox-ADC method for problem \eqref{eq:numerical_Ex2} under the termination criteria \eqref{eq:Termination} with $\bar\eta = \bar\beta = 2 \times 10^{-2}$ and $\bar k = 5$. (a): using different sequences $\{\gamma_k\}$ for a fixed initial point. (b): using five initial points uniformly distributed on $[-1, 1]^n$ for a fixed sequence $\{\gamma_k = 1/ (k + 10)^{2.5}\}$.}
    \label{figEx2}
\end{figure}

%
%
%

\section*{Acknowledgments.} The authors are partially supported by the National Science Foundation under grants CCF-2416172 and DMS-2416250, and the  National Institutes of Health under grant 1R01CA287413-01.
{The authors are grateful
to the associate editor and two reviewers for their careful reading and constructive comments that have substantially improved the paper.}

\bibliographystyle{plain}
\bibliography{reference} 

\begin{thebibliography}{10}

\bibitem{acerbi2002spectral}
Carlo Acerbi.
\newblock Spectral measures of risk: A coherent representation of subjective
  risk aversion.
\newblock {\em Journal of Banking and Finance}, 26(7):1505--1518, 2002.

\bibitem{ahmed2005inverse}
Shabbir Ahmed and Yongpei Guan.
\newblock The inverse optimal value problem.
\newblock {\em Mathematical Programming}, 102(1):91--110, 2005.

\bibitem{beck2012smoothing}
Amir Beck and Marc Teboulle.
\newblock Smoothing and first order methods: {A} unified framework.
\newblock {\em SIAM Journal on Optimization}, 22(2):557--580, 2012.

\bibitem{burke1985descent}
James~V Burke.
\newblock Descent methods for composite nondifferentiable optimization
  problems.
\newblock {\em Mathematical Programming}, 33(3):260--279, 1985.

\bibitem{burke1995gauss}
James~V Burke and Michael~C Ferris.
\newblock A {Gauss-Newton} method for convex composite optimization.
\newblock {\em Mathematical Programming}, 71(2):179--194, 1995.

\bibitem{burke2013epi}
James~V Burke and Tim Hoheisel.
\newblock Epi-convergent smoothing with applications to convex composite
  functions.
\newblock {\em SIAM Journal on Optimization}, 23(3):1457--1479, 2013.

\bibitem{burke2017epi}
James~V Burke and Tim Hoheisel.
\newblock Epi-convergence properties of smoothing by infimal convolution.
\newblock {\em Set-Valued and Variational Analysis}, 25(1):1--23, 2017.

\bibitem{burke2013gradient}
James~V Burke, Tim Hoheisel, and Christian Kanzow.
\newblock Gradient consistency for integral-convolution smoothing functions.
\newblock {\em Set-Valued and Variational Analysis}, 21(2):359--376, 2013.

\bibitem{burke2005robust}
James~V Burke, Adrian~S Lewis, and Michael~L Overton.
\newblock A robust gradient sampling algorithm for nonsmooth, nonconvex
  optimization.
\newblock {\em SIAM Journal on Optimization}, 15(3):751--779, 2005.

\bibitem{chen2012smoothing}
Xiaojun Chen.
\newblock Smoothing methods for nonsmooth, nonconvex minimization.
\newblock {\em Mathematical Programming}, 134(1):71--99, 2012.

\bibitem{clarke1975generalized}
Frank~H Clarke.
\newblock Generalized gradients and applications.
\newblock {\em Transactions of the American Mathematical Society},
  205:247--262, 1975.

\bibitem{clarke1990optimization}
Frank~H Clarke.
\newblock {\em Optimization and Nonsmooth Analysis}.
\newblock Society for Industrial and Applied Mathematics, Philadelphia, 1990.

\bibitem{cui2021modern}
Ying Cui and Jong-Shi Pang.
\newblock {\em Modern Nonconvex Nondifferentiable Optimization}.
\newblock Society for Industrial and Applied Mathematics, Philadelphia, 2021.

\bibitem{drusvyatskiy2019efficiency}
Dmitriy Drusvyatskiy and Courtney Paquette.
\newblock Efficiency of minimizing compositions of convex functions and smooth
  maps.
\newblock {\em Mathematical Programming}, 178(1--2):503--558, 2019.

\bibitem{ermoliev1995minimization}
Yuri~M Ermoliev, Vladimir~I Norkin, and Roger~JB Wets.
\newblock The minimization of semicontinuous functions: {M}ollifier
  subgradients.
\newblock {\em SIAM Journal on Control and Optimization}, 33(1):149--167, 1995.

\bibitem{fletcher1982model}
Roger Fletcher.
\newblock A model algorithm for composite nondifferentiable optimization
  problems.
\newblock {\em Mathematical Programming Studies}, 17:67--76, 1982.

\bibitem{folland1999real}
Gerald~B Folland.
\newblock {\em Real Analysis: Modern Techniques and Their Applications},
  volume~40.
\newblock John Wiley \& Sons, New York, 1999.

\bibitem{goldstein1977optimization}
Allen~A Goldstein.
\newblock Optimization of {L}ipschitz continuous functions.
\newblock {\em Mathematical Programming}, 13(1):14--22, 1977.

\bibitem{lewis2016proximal}
Adrian~S Lewis and Stephen~J Wright.
\newblock A proximal method for composite minimization.
\newblock {\em Mathematical Programming}, 158(1-2):501--546, 2016.

\bibitem{li2022decomposition}
Hanyang Li and Ying Cui.
\newblock A decomposition algorithm for two-stage stochastic programs with
  nonconvex recourse.
\newblock {\em SIAM Journal on Optimization}, 34(1):306--335, 2024.

\bibitem{liu2020two}
Junyi Liu, Ying Cui, Jong-Shi Pang, and Suvrajeet Sen.
\newblock Two-stage stochastic programming with linearly bi-parameterized
  quadratic recourse.
\newblock {\em SIAM Journal on Optimization}, 30(3):2530--2558, 2020.

\bibitem{mordukhovich1994generalized}
Boris~S Mordukhovich.
\newblock Generalized differential calculus for nonsmooth and set-valued
  mappings.
\newblock {\em Journal of Mathematical Analysis and Applications},
  183(1):250--288, 1994.

\bibitem{norton2021calculating}
Matthew Norton, Valentyn Khokhlov, and Stan Uryasev.
\newblock Calculating {CVaR} and {bPOE} for common probability distributions
  with application to portfolio optimization and density estimation.
\newblock {\em Annals of Operations Research}, 299(1):1281--1315, 2021.

\bibitem{paleologo2001bandwidth}
Giuseppe Paleologo and Samer Takriti.
\newblock Bandwidth trading: {A} new market looking for help from the {OR}
  community.
\newblock {\em AIRO News}, 6(3):1--4, 2001.

\bibitem{poliquin1992amenable}
RA~Poliquin and R~Tyrrell Rockafellar.
\newblock Amenable functions in optimization.
\newblock {\em Nonsmooth Optimization: Methods and Applications (Erice, 1991)},
  pages 338--353, 1992.

\bibitem{poliquin1993calculus}
RA~Poliquin and R~Tyrrell Rockafellar.
\newblock A calculus of epi-derivatives applicable to optimization.
\newblock {\em Canadian Journal of Mathematics}, 45(4):879--896, 1993.

\bibitem{rockafellar1970convex}
R~Tyrrell Rockafellar.
\newblock {\em Convex Analysis}, volume~18.
\newblock Princeton University Press, Princeton, NJ, 1970.

\bibitem{rockafellar1993lagrange}
R~Tyrrell Rockafellar.
\newblock Lagrange multipliers and optimality.
\newblock {\em SIAM Review}, 35(2):183--238, 1993.

\bibitem{rockafellar2000optimization}
R~Tyrrell Rockafellar and Stanislav Uryasev.
\newblock Optimization of conditional value-at-risk.
\newblock {\em Journal of Risk}, 2(3):21--42, 2000.

\bibitem{rockafellar2009variational}
R~Tyrrell Rockafellar and Roger~JB Wets.
\newblock {\em Variational Analysis}, volume 317.
\newblock Springer Science \& Business Media, New York, 2009.

\bibitem{royset2020approximations}
Johannes~O Royset.
\newblock Approximations of semicontinuous functions with applications to
  stochastic optimization and statistical estimation.
\newblock {\em Mathematical Programming}, 184(1):289--318, 2020.

\bibitem{royset2022consistent}
Johannes~O Royset.
\newblock Consistent approximations in composite optimization.
\newblock {\em Mathematical Programming}, 201(1--2):339--372, 2022.

\bibitem{shapiro2021lectures}
Alexander Shapiro, Darinka Dentcheva, and Andrzej Ruszczy{\'n}ski.
\newblock {\em Lectures on Stochastic Programming: Modeling and Theory (Third
  Edition)}.
\newblock Society for Industrial and Applied Mathematics, Philadelphia, 2021.

\bibitem{van2020discussion}
Wim van Ackooij.
\newblock A discussion of probability functions and constraints from a
  variational perspective.
\newblock {\em Set-Valued and Variational Analysis}, 28(4):585--609, 2020.

\bibitem{yao2022large}
Yao Yao, Qihang Lin, and Tianbao Yang.
\newblock Large-scale optimization of partial {AUC} in a range of false
  positive rates.
\newblock {\em Advances in Neural Information Processing Systems},
  35:31239--31253, 2022.

\end{thebibliography}

\section*{Appendix A. Proofs of Proposition \ref{proposition: icc ADC} and Proposition \ref{proposition: ADC of Var}.}
\begin{proof}[Proof of Proposition \ref{proposition: icc ADC}.]

(a) {We first generalize the {convergence result} of the classical Moreau envelopes when $\gamma_k \downarrow 0$ (see, e.g., \cite[Theorem 1.25]{rockafellar2009variational}) to the partial Moreau envelopes. Fixing any $\gamma_0 > 0$, we consider the function $\psi(z,x,\gamma) \triangleq \overline f(z,x) + \delta_{\dom f}(x) + \psi_0(z,x,\gamma)$ with
\[
    \psi_0(z,x,\gamma) \triangleq \left\{
    \begin{array}{cl}
	\|z - x\|^2/(2\gamma) &\text{ if }\gamma \in (0,\gamma_0], \\
	0 &\text{ if }\gamma = 0, z = x,\\
	\infty &\text{ otherwise.}
    \end{array}\right.
\]
Notice that $f^k(x) = g_{\gamma_k}(x) - h_{\gamma_k}(x) + \delta_{\dom f}(x) = \inf_z \psi(z,x,\gamma_k)$. It is easy to verify that $\psi$ is proper and lsc based on our assumptions. Under the assumption that $\overline{f}$ is bounded from below on $\dom f \times \dom f$, we can also show by contradiction that $\psi(z,x,\gamma)$ is level-bounded in $z$ locally uniformly in $(x, \gamma)$. Consequently, it follows from \cite[Theorem 1.17]{rockafellar2009variational} that $f^k(x) = \inf_z \psi(z,x,\gamma_k) \uparrow f(x)$ for any fixed $x$ and each $f^k$ is lsc.}

Hence, $f^k \Epiconv f$ is a direct consequence of \cite[Proposition 7.4(d)]{rockafellar2009variational} by $f^k(x) \uparrow f(x)$ for all $x$ and the lower semicontinuity of $f^k$. If $\dom f = {\R^n}$, then $f$ is continuous, and thus $f^k \Contconv f$ by \cite[Proposition 7.4(c-d)]{rockafellar2009variational}. 
We then complete the proof of (a).

(b) For any $\bar x \in \interior(\dom f)$,
\[
\begin{array}{rl}
	\partial_{A} f(\bar x)
	=\;\;\,& \bigcup\limits_{x^k \rightarrow \bar x} \Limsup_{k \rightarrow +\infty} \;\left\{\partial {g_{\gamma_k}}(x^k) - \partial {h_{\gamma_k}}(x^k)\right\}\\
	\overset{(\rm\romannumeral1^\prime)}{=}\;\;\,& \bigcup\limits_{x^k \rightarrow \bar x}\Limsup_{k \rightarrow +\infty} \;\left\{\frac{x^k}{\gamma_k} - \partial_2 (-\overline{f})(z^k, x^k) - \frac{z^k}{\gamma_k} \;\middle|\; z^k = \argmin_{z \in {\R^{n}} } \left[\,\overline{f}(z,x^k) + \frac{1}{2\gamma_k}\|z - x^k\|^2\right] \right\}\\
	\,\overset{(\rm\romannumeral2^\prime)}{\subset}\;\;\,& \bigcup\limits_{(x^k, z^k) \rightarrow (\bar x, \bar x)} \Limsup_{k \rightarrow +\infty} \;\left[\partial_1 \overline{f}(z^k, x^k) - \partial_2 (-\overline{f})(z^k, x^k) \right]\\
	\overset{(\rm\romannumeral3^\prime)}{=}\;\; & \partial_1 \overline{f}(\bar x,\bar x) - \partial_2 (-\overline{f})(\bar x,\bar x),
\end{array}
\]
where $(\rm\romannumeral1^\prime)$ follows from the convexity of $(-\overline f)(z,\cdot)$ for any $z \in \dom f$ and Danskin's Theorem \cite[Theorem 2.1]{clarke1975generalized}; $(\rm\romannumeral2^\prime)$ is due to the optimality condition for $z^k$,  and $z^k \rightarrow \bar x$  is obtained by similar arguments in the proof of Theorem \ref{thm:eADC_subdiff}(b) due to our assumption that $\overline{f}$ is bounded from below on $\dom f \times \dom f$; and $(\rm\romannumeral3^\prime)$ uses the outer semicontinuity of $\partial_1 \overline{f}$ and $\partial_2 (-\overline{f})$ at $(\bar x, \bar x)$ \cite[Lemma 5]{li2022decomposition}. Therefore, for any $\bar x \in \interior(\dom f)$, $\partial f(\bar x) \subset \partial_{A} f(\bar x) \subset \partial_1 \overline{f}(\bar x,\bar x) - \partial_2 (-\overline{f})(\bar x,\bar x)$. 
Moreover, due to the local boundedness of the mappings $\partial_1 \overline{f}$ and $\partial_2 (-\overline{f})$ at $(\bar x, \bar x)$ \cite[Lemma 5]{li2022decomposition}, it follows from \cite[Example 4.22]{rockafellar2009variational} that $\partial^\infty_A f(\bar x) = \{0\}$. 
\end{proof}

\vspace{0.15in}

\begin{proof}[Proof of Proposition \ref{proposition: ADC of Var}.]
    (a) Note that for any $x \in \R^n$, ${\mbox{CVaR}_{\alpha}}[\, c(x,Z) \,]$ is well-defined and takes finite value due to $\mathbb{E}[\, |c(x,Z)|\,] < +\infty$. 
    Since $c(x,Z)$ follows a continuous distribution for any $x \in \R^n$, we know {from \cite[Theorem 1]{rockafellar2000optimization} and \cite{acerbi2002spectral} that CVaR has the following equivalent representations:}
    \[
        {\mbox{CVaR}_{\alpha}}[\,c(x,Z)\,] = \inf_{t \in \R}\left\{ t + \frac{1}{1-\alpha} \, \mathbb{E}\left[\,\max\{c(x,Z)-t,0\}\right] \right\} = \frac{1}{1-\alpha} \int_{\alpha}^{1} \mbox{VaR}_t[\,c(x,Z)\,] \, \mbox{d}t.
    \]
    Moreover, ${\mbox{CVaR}_\alpha}[\,c(\cdot,Z)\,]$ is convex by the convexity of $c(\cdot,z)$ for any fixed $z \in \R^m$ (cf. \cite[Theorem 2]{rockafellar2000optimization}). Therefore, both $g^k$ and $h^k$ defined in \eqref{eq:VaR_ADC} are convex. By the definitions of $g^k$ and $h^k$, we have
\[
\begin{array}{rl}
    g^k(x) - h^k(x) &= [k(1-\alpha)+1] \, \mbox{CVaR}_{\alpha-1/k}[c(x,Z)] - k(1-\alpha) \, \mbox{CVaR}_{\alpha}[c(x,Z)]\\[0.08in]
    &= \displaystyle\frac{k(1-\alpha) + 1}{1 - (\alpha - 1/k)} \int^{1}_{\alpha - 1/k} \operatorname{VaR}_t [c(x,Z)] \,\mbox{d}t - \frac{k(1-\alpha)}{1 - \alpha} \int^{1}_{\alpha} \operatorname{VaR}_t [c(x,Z)] \,\mbox{d}t\\[0.15in]
    & = k \displaystyle\int^{1}_{\alpha - 1/k} \operatorname{VaR}_t [c(x,Z)] \,\mbox{d}t - k \int^1_\alpha \operatorname{VaR}_t [c(x,Z)] \,\mbox{d}t\\[0.15in]
    &= k \displaystyle\int^{\alpha}_{\alpha - 1/k} \operatorname{VaR}_t [c(x,Z)] \,\mbox{d}t.
\end{array}
\]
Note that $\operatorname{VaR}_t[c(x,Z)]$ is nondecreasing as a function of $t$ for any fixed $x \in \R^n$. Namely,
\[
     \displaystyle\int^{\alpha}_{\alpha - 1/k} \operatorname{VaR}_{\alpha - 1/k}[c(x,Z)] \,\mbox{d}t
     \leq \displaystyle\int^{\alpha}_{\alpha - 1/k} \operatorname{VaR}_{t}[c(x,Z)] \,\mbox{d}t
     \leq \displaystyle\int^{\alpha}_{\alpha - 1/k} \operatorname{VaR}_{\alpha}[c(x,Z)] \,\mbox{d}t.
\]
Thus, $\mbox{VaR}_{\alpha-1/k}[\, c(x,Z) \,] \leq g^k(x) - h^k(x) \leq \mbox{VaR}_{\alpha}[\, c(x,Z) \,]$ for any $x \in \R^n$ and $k > 1/\alpha$. 
    Since $\mbox{VaR}_t[\, c(x,Z) \,]$ as a function of $t$ on $(0,1)$ is left-continuous, it follows that $[g^k(x) - h^k(x)] \uparrow \mbox{VaR}_{\alpha}[\, c(x,Z) \,]$ for all $x$.
    Observe that
    \[
        \{x \mid \mbox{VaR}_{\alpha}[\, c(x,Z) \,] \leq r\} = \{ x \mid \mathbb{P}(c(x,Z) \leq r) \geq \alpha \}.
    \]
    Based on our assumptions and \cite[Proposition 2.2]{van2020discussion}, for any $r \in \R$, the probability function $x \mapsto -\mathbb{P}(c(x,Z) \leq r)$ is lsc, which implies the closedness of the level set $\{x \mid \mathbb{P}(c(x,Z) \leq r) \geq \alpha\}$ for any $(r, \alpha) \in \R \times (0,1)$. Hence, $\mbox{VaR}_\alpha [\, {c(\cdot,Z)}]$ is lsc for any given $\alpha \in (0,1)$ and is continuous if $c(\cdot, \cdot)$ is further assumed to be continuous. {Then (a) is a direct consequence of \cite[Proposition 7.4(c-d)]{rockafellar2009variational} by the monotonicity $[g^k(x) - h^k(x)] \uparrow \operatorname{VaR}_{\alpha}[\, c(x,Z)]$ and the continuity of $\mbox{VaR}_{\alpha}[\, c(\cdot,Z)]$.}
    
    (b) {We use $\mathcal{L}_1(\Omega, \mathcal{F}, \mathbb{P})$ to denote the space of all random variables ${X}: \Omega \to \R$ with $\mathbb{E}[\,|{X}(\omega)|] < +\infty$. According to \cite[Example 6.19]{shapiro2021lectures}, the function ${\mbox{CVaR}_{\alpha}}:\mathcal{L}_1(\Omega, \mathcal{F}, \mathbb{P}) \to \R$ is subdifferentiable (see \cite[(9.281)]{shapiro2021lectures} for the definition). Consider any fixed $x \in \R^n$. Given that $c(x, Z)$ is a continuous random variable {in $\mathcal{L}_1 (\Omega, \mathcal{F}, \mathbb{P})$}, it follows from \cite[(6.81)]{shapiro2021lectures} that the subdifferential of ${\mbox{CVaR}_{\alpha}}[\cdot]$ at $c(x,Z)$ is:}
    \begin{equation}\label{eq:CVaR_subdiff}
        {\partial \left(\mbox{CVaR}_{\alpha} [\,\cdot\,]\right)[\,c(x,Z)]} = \left\{
        \phi \in \mathcal{L}_\infty(\Omega, \mathcal{F}, \mathbb{P}) \,\middle|
        \begin{array}{ll}
            \phi(\omega) = (1-\alpha)^{-1} & \text{if } c(x,Z(\omega)) > \mbox{VaR}_{\alpha}[\,c(x,Z)] \\
            {\phi(\omega) \in [0, (1-\alpha)^{-1}]} & {\text{if } c(x,Z(\omega)) = \mbox{VaR}_{\alpha}[\,c(x,Z)]} \\
            \phi(\omega) = 0 & \text{if } c(x,Z(\omega)) < \mbox{VaR}_{\alpha}[\,c(x,Z)]
        \end{array}
        \right\}.
    \end{equation}
    {We would like to mention that the event $\{ \omega \in \Omega \mid c(x,Z(\omega)) = \mbox{VaR}_{\alpha}[\,c(x,Z)]\}$ has zero probability and, thus, $\mathbb{E}[\,\phi\,] = (1-\alpha)^{-1} \cdot (1-\alpha) = 1$ for every random variable $\phi \in \partial \left(\mbox{CVaR}_{\alpha} [\,\cdot\,]\right) \,[\,c(x,Z)]$.} Let $\mathbb{P}_Z$ denote the probability measure associated with $Z$. By using \cite[Theorem 6.14]{shapiro2021lectures}, we obtain {the subdifferential of the convex function ${\mbox{CVaR}_{\alpha}}[\, c(\cdot,Z) \,]$ at $x$:}
    \begin{equation}\label{eq:CVaR_comp_subdiff}
    \begin{split}
    \begin{array}{rl}
        \partial \left({\mbox{CVaR}_{\alpha}}[\, c(\cdot,Z) \,]\right) (x) =\;\;& \operatorname{cl}\left(\displaystyle\bigcup_{\phi \in \partial ({\operatorname{CVaR}_{\alpha}} [\,\cdot\,]) \,[\,c(x,Z)]} \int \partial_1 \,c(x,Z(\omega)) \, \phi(\omega) \,\mbox{d}\mathbb{P}_Z(\omega)\right) \\[0.27in]
        \overset{(\rm\romannumeral4^\prime)}{=}& {\operatorname{cl}\left( \displaystyle\int \partial_1 \,c(x,Z(\omega)) \, \bar\phi(\omega) \,\mbox{d}\mathbb{P}_Z(\omega)\right)
        \quad\forall\, \bar\phi \in \partial (\mbox{CVaR}_{\alpha} [\,\cdot\,]) \,[\,c(x,Z)]} \\[0.17in]
        \overset{(\rm\romannumeral5^\prime)}{=}\;\,& {\displaystyle\int \partial_1 \,c(x,Z(\omega)) \, \bar\phi(\omega) \,\mbox{d}\mathbb{P}_Z(\omega)
        \qquad\quad\;\;\forall\, \bar\phi \in \partial (\mbox{CVaR}_{\alpha} [\,\cdot\,]) \,[\,c(x,Z)].}
    \end{array}
    \end{split}
    \end{equation}
    To see $(\rm\romannumeral4^\prime)$, it suffices to show that, for arbitrary two elements $\phi_1$ and $\phi_2$ in the set $\partial (\mbox{CVaR}_{\alpha} [\,\cdot\,]) \,[\,c(x,Z)]$, we have
    \begin{equation}
    \label{eq:int_a.s.}
        \displaystyle\int \partial_1 \,c(x,Z(\omega)) \, \phi_1(\omega) \,\mbox{d}\mathbb{P}_Z(\omega)
        = \int \partial_1 \,c(x,Z(\omega)) \, \phi_2(\omega) \,\mbox{d}\mathbb{P}_Z(\omega).
    \end{equation}
    To this end, we take any measurable selection $a(x, Z(\omega)) \in \partial_1 c(x, Z(\omega))$. By the assumption that $|c(x, z) - c(x^\prime, z)| \leq \kappa(z) \|x - x^\prime\|$ for all $x, x^\prime \in \R^n$ and $z \in \R^m$, it holds that $\|a(x, Z)\| \leq \kappa(Z)$ since subgradients of a convex function are uniformly bounded in norm by the Lipschitz constant. Consequently, both $a(x, Z(\omega)) \, \phi_1(\omega)$ and $a(x, Z(\omega)) \, \phi_2(\omega)$ are integrable as $|\phi_1(\omega)| \leq (1 - \alpha)^{-1}$ and $|\phi_2(\omega)| \leq (1 - \alpha)^{-1}$ for any $\omega$ by \eqref{eq:CVaR_subdiff} and $\mathbb{E}[\|a(x,Z)\|] \leq \mathbb{E}[\kappa(Z)] < +\infty$ by our assumption. Observing that $a(x, Z(\omega)) \, \phi_1(\omega) = a(x, Z(\omega)) \, \phi_2(\omega)$ almost surely, we can conclude from \cite[Proposition 2.23]{folland1999real} that $\int a(x,Z(\omega)) \, \phi_1(\omega) \,\mbox{d}\mathbb{P}_Z(\omega) = \int a(x,Z(\omega)) \, \phi_2(\omega) \,\mbox{d}\mathbb{P}_Z(\omega)$. This completes the proof of \eqref{eq:int_a.s.}.
    
    Next, we will explain why the closure can be removed in \eqref{eq:CVaR_comp_subdiff}. By the convexity of $c(\cdot,z)$ for any fixed $z \in \R^m$ and the existence of a measurable function $\kappa$, it follows from \cite[Theorem 2.7.2]{clarke1990optimization} that
    \[
        \int \partial_1 c(x,Z(\omega)) \, \bar\phi(\omega) \, \mbox{d}\mathbb{P}_Z(\omega) = \partial {\left(\int c(\cdot,Z(\omega)) \, \bar\phi(\omega) \, \mbox{d}\mathbb{P}_Z(\omega) \right) (x)},
    \]
    where the right-hand-side is the subdifferential of a convex function and, thus, is a closed set. Then, we can omit the closure to obtain the equation $(\rm\romannumeral5^\prime)$ in \eqref{eq:CVaR_comp_subdiff}.

    \gap
    
    Now we use the expression of $\partial \left(\mbox{CVaR}_{\alpha}[\, c(\cdot,Z) \,]\right) (x)$ to characterize $\partial_A \mbox{VaR}_\alpha [\,c(\cdot, Z)] (\bar x)$. For any $k > 1/\alpha$, taking any $\phi_3 \in \partial (\operatorname{CVaR}_{\alpha - 1/k} [\,\cdot\,]) \,[\,c(x,Z)]$ and $\phi_4 \in \partial (\operatorname{CVaR}_{\alpha} [\,\cdot\,]) \,[\,c(x,Z)]$, we have
    \[
    \begin{array}{rl}
        \partial g^k(x) - \partial h^k(x)
        =\;\;& [k(1 - \alpha) + 1] \, \partial \operatorname{CVaR}_{\alpha - 1 / k} [\,c(\cdot, Z)] (x) - k(1 - \alpha) \,\partial \operatorname{CVaR}_{\alpha} [\,c(\cdot, Z)] (x) \\[0.05in]
        \overset{\eqref{eq:CVaR_comp_subdiff}}{=}& \displaystyle\int \partial_1 c(x, Z(\omega)) \cdot \big([k(1 - \alpha) + 1] \, \phi_3(\omega) - k (1-\alpha) \,\phi_4(\omega) \big) \, \mbox{d}\mathbb{P}_Z(\omega) \\
         \overset{\eqref{eq:CVaR_subdiff}}{=}& \displaystyle\int \partial_1 c(x, Z(\omega)) \cdot \phi(\omega) \, \mbox{d}\mathbb{P}_Z(\omega),
    \end{array}
    \]
    with
    \[
        \phi(\omega) \triangleq \left\{
        \begin{array}{cl}
        {0} &{\text{ if } c(x,Z(\omega)) > \mbox{VaR}_{\alpha}[\,c(x,Z)]} {\text{ or } c(x,Z(\omega)) < \mbox{VaR}_{\alpha-1/k}[\,c(x,Z)]}\\
        {[0, k]} &{\text{ if } c(x,Z(\omega)) = \mbox{VaR}_{\alpha}[\,c(x,Z)]} \text{ or } c(x,Z(\omega)) = \mbox{VaR}_{\alpha-1/k}[\,c(x,Z)] \\
        k &\text{ if } \mbox{VaR}_{\alpha-1/k}[\,c(x,Z)] < c(x,Z(\omega)) < \mbox{VaR}_{\alpha}[\,c(x,Z)] 
        \end{array}\right. .
    \]
    Since the event $\{\omega \in \Omega \mid c(x, Z(\omega)) = \mbox{VaR}_\alpha [\, c(x,Z)] \text{ or } \mbox{VaR}_{\alpha - 1/k} [\, c(x,Z)]\}$ has zero probability, we have
    \[
    \begin{array}{rl}
        \partial g^k(x) - \partial h^k(x)
        =& \displaystyle\int \partial_1 c(x, Z(\omega)) \, k \, \mathbf{1}\{\mbox{VaR}_{\alpha-1/k}[\,c(x,Z)] < c(x,Z) < \mbox{VaR}_{\alpha}[\,c(x,Z)]\} \, \mbox{d} \mathbb{P}_Z(\omega) \\[0.03in]
        =& \displaystyle\int \partial_1 c(x, Z(\omega)) \cdot \frac{\mathbf{1}\{\mbox{VaR}_{\alpha-1/k}[\,c(x,Z)] < c(x,Z) < \mbox{VaR}_{\alpha}[\,c(x,Z)]\}}{\mathbb P(\mbox{VaR}_{\alpha-1/k}[\,c(x,Z)] < c(x,Z) < \mbox{VaR}_{\alpha}[\,c(x,Z)])} \, \mbox{d} \mathbb{P}_Z(\omega) \\[0.15in]
        =& \mathbb{E}\left[\,\partial_1 c(x,Z) \,\middle|\, \mbox{VaR}_{\alpha-1/k}[\,c(x,Z)] < c(x,Z) < \mbox{VaR}_{\alpha}[\,c(x,Z)] \,\right].
    \end{array}
    \]
    By the definition of the approximate subdifferential, the proof is then completed. 
\end{proof}

\vspace{0.15in}

\section*{Appendix B. {Proof} of Proposition \ref{prop:assumption5}.}

We start with the chain rules for $\partial(\varphi\circ f)$ and $\partial^\infty(\varphi\circ f)$, where the inner function $f$ is merely lsc. These results are extensions of the nonlinear rescaling \cite[Proposition 10.19(b)]{rockafellar2009variational} to the case where $\varphi$ may lack the strictly increasing property at a given point. {One can also derive the same results through a general chain rule of the coderivative for composite set-valued mappings \cite[Theorem 5.1]{mordukhovich1994generalized}. However, to avoid the complicated computations accompanied by the introduction of the coderivative, we give an alternative proof below that is more straightforward.} To prepare for the chain rules, we need a technical lemma about the proximal normal cone.

\begin{lemma}
\label{lem:prox_normal_cone}
    Let $f: \R^n \to \R$ be a lsc function. {For $\bar\alpha > f(\bar x)$}, it holds that
    \[
        \mathcal{N}^p_{\operatorname{epi} f} (\bar x, \bar \alpha) \subset \mathcal{N}^p_{\operatorname{epi} f} (\bar x, f(\bar x)).
    \]
\end{lemma}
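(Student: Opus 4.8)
The plan is to work directly from the definition of the proximal normal cone, $\mathcal{N}^p_C(\bar{x}) = \{\lambda(w-\bar{x}) \mid \bar{x}\in P_C(w),\ \lambda\ge 0\}$, and to track how a realizing external point behaves when the reference point is pushed down the vertical fiber from $(\bar x,\bar\alpha)$ to $(\bar x,f(\bar x))$. Fix $v \in \mathcal{N}^p_{\operatorname{epi} f}(\bar x,\bar\alpha)$. If $v=0$ there is nothing to prove, since $0$ lies in every proximal normal cone (take $\lambda=0$). Otherwise write $v=\lambda\big((x,\beta)-(\bar x,\bar\alpha)\big)$ with $\lambda>0$ and $(\bar x,\bar\alpha)\in P_{\operatorname{epi} f}(x,\beta)$, so that for every $(y,\gamma)\in\operatorname{epi} f$ one has $\|x-\bar x\|^2+(\beta-\bar\alpha)^2\le \|x-y\|^2+(\beta-\gamma)^2$. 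Throughout I would exploit the two elementary structural features of an epigraph: it is closed (as $f$ is lsc), and it is upward closed, i.e. $(y,\gamma)\in\operatorname{epi} f$ implies $(y,\gamma')\in\operatorname{epi} f$ for all $\gamma'\ge\gamma$.

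The crux --- and the step I expect to be the main obstacle --- is to show that the hypothesis $\bar\alpha>f(\bar x)$ forces the realizing point to lie at the same height, namely $\beta=\bar\alpha$, so that $v$ is a purely horizontal vector $\lambda(x-\bar x,0)$. To see this I would restrict the projection inequality to the vertical ray $\{\bar x\}\times[f(\bar x),+\infty)\subseteq\operatorname{epi} f$: on this ray the squared distance to $(x,\beta)$ is $\|x-\bar x\|^2+(\beta-t)^2$, whose minimizer over $t\ge f(\bar x)$ is the projection of $\beta$ onto $[f(\bar x),+\infty)$, that is $\max\{\beta,f(\bar x)\}$. Since $(\bar x,\bar\alpha)$ is a global nearest point it must in particular be nearest along this ray, forcing $\bar\alpha=\max\{\beta,f(\bar x)\}$; as $\bar\alpha>f(\bar x)$ this is possible only when $\beta=\bar\alpha$. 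Equivalently, any $\beta\ne\bar\alpha$ would let one slide along the fiber to a strictly closer feasible point, contradicting optimality.

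With $\beta=\bar\alpha$ in hand, the projection inequality reads $\|x-\bar x\|^2\le\|x-y\|^2+(\bar\alpha-\gamma)^2$ for all $(y,\gamma)\in\operatorname{epi} f$, and it remains to certify that $(\bar x,f(\bar x))$ is a nearest point of $\operatorname{epi} f$ to the lowered external point $(x,f(\bar x))$. Here I would use upward closedness with the shift $\Delta\triangleq\bar\alpha-f(\bar x)>0$: given any $(y,\gamma)\in\operatorname{epi} f$, the point $(y,\gamma+\Delta)$ is again in $\operatorname{epi} f$, and feeding it into the inequality yields $\|x-\bar x\|^2\le\|x-y\|^2+(\bar\alpha-\gamma-\Delta)^2=\|x-y\|^2+(f(\bar x)-\gamma)^2$. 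Since the candidate $(\bar x,f(\bar x))$ attains equality (its own squared-distance term is exactly $\|x-\bar x\|^2$), we conclude $(\bar x,f(\bar x))\in P_{\operatorname{epi} f}(x,f(\bar x))$, whence $v=\lambda\big((x,f(\bar x))-(\bar x,f(\bar x))\big)\in\mathcal{N}^p_{\operatorname{epi} f}(\bar x,f(\bar x))$, the desired inclusion. The only role played by lower semicontinuity is to make $\operatorname{epi} f$ closed so that projection is meaningful; everything else is the geometry of upward-closed sets, and I would expect the height-pinning argument of the second paragraph to be the one requiring the most care to state cleanly.
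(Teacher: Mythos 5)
Your proof is correct and follows essentially the same route as the paper's: both arguments translate the realizing external point vertically downward by $\Delta = \bar\alpha - f(\bar x)$ and use the upward-closedness of $\operatorname{epi} f$ (i.e., that $\operatorname{epi} f$ is contained in its downward translate) to preserve the nearest-point property while keeping the difference vector unchanged. The only difference is your preliminary height-pinning step showing $\beta = \bar\alpha$, a correct extra observation (it makes the proximal normal horizontal) that the paper's direct shift argument does not need.
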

\begin{proof}[Proof of Lemma \ref{lem:prox_normal_cone}.]
    For $\bar\alpha > f(\bar x)$. By \cite[Example 6.16]{rockafellar2009variational}, we have
    \[
        \mathcal{N}^p_{\operatorname{epi} f} (\bar x, \bar \alpha) = \left\{\lambda[(x, \alpha) - (\bar x,\bar\alpha)] \mid (x, \alpha)\in\R^{n+1} \text{ such that } (\bar x, \bar\alpha) \in \Pi_{\operatorname{epi}f}(x,\alpha), \lambda \geq 0\right\},
    \]
    where $\Pi_{\operatorname{epi} f}: \R^{n+1} \to \operatorname{epi} f$ is the projection operator. For any $(x, \alpha) \in \R^{n+1}$ with $(\bar x, \bar\alpha) \in \Pi_{\operatorname{epi}f}(x,\alpha)$, we have
    \[
        (\bar x, \bar\alpha) \in \displaystyle\operatornamewithlimits{argmin}_{(u,t) \in \operatorname{epi}f} \|(u,t) - (x,\alpha)\|^2,
    \]
    which can be equivalently written as
    \[
        (\bar x, f(\bar x)) \in \displaystyle\operatornamewithlimits{argmin}_{(u,t + \bar\alpha - f(\bar x)) \in \operatorname{epi}f} \|(u,t) - (x,\alpha - \bar\alpha + f(\bar x))\|^2.
    \]
     Then restrict the feasible region of the above problem to a subset $\{(u,t)\mid (u,t) \in \operatorname{epi} f\}$. Since $(\bar x, f(\bar x))$ is still a feasible point in this subset, we have $(\bar x, f(\bar x)) \in \Pi_{\operatorname{epi}f}(x,\alpha-\bar\alpha+f(\bar x))$. Hence, $(\bar x, \bar\alpha) \in \Pi_{\operatorname{epi}f}(x,\alpha)$ implies $(\bar x, f(\bar x)) \in \Pi_{\operatorname{epi}f}(x,\alpha-\bar\alpha+f(\bar x))$. Using this result and the expression of $\mathcal{N}^p_{\operatorname{epi} f} (\bar x, \bar \alpha)$, we conclude that $\mathcal{N}^p_{\operatorname{epi} f} (\bar x, \bar \alpha) \subset \mathcal{N}^p_{\operatorname{epi} f} (\bar x, f(\bar x))$ for $\bar\alpha > f(\bar x)$. 
\end{proof}

We present the chain rules with a self-contained proof in the following lemma.

\begin{lemma}[chain rules for the limiting subdifferential]
\label{lem:chainrule_univar}
	Let $\varphi: \R \rightarrow \ER$ be proper, lsc,  convex, and nondecreasing with $\sup\varphi = +\infty$,  and $f: \R^n \rightarrow \R$ be lsc. 
 Consider $\bar x\in \dom (\varphi \circ f)$.  If the only scalar $y \in \operatorname{Lim \, sup}_{x \rightarrow_{(\varphi \circ f)} \bar x} \,\mathcal N_{\dom\varphi} (f(x))$ with $0 \in y \cdot \operatorname{Lim \, sup}_{x \rightarrow \bar x} \partial f(x)$ is $y = 0$, then
	\[
	\begin{array}{ll}
		\quad\partial (\varphi \circ f)(\bar x)
		\subset \bigcup
		\left\{\, y \cdot \Limsup_{x \rightarrow \bar x}\partial f(x) \,\middle|\, y \in \Limsup_{x \rightarrow_{(\varphi\circ f)} \,\bar x}\partial \varphi(f(x)) \,\right\}
        \cup
        \left[\,{\Big(}
            {\displaystyle\Limsup_{x \rightarrow \bar x}}^\infty \partial f(x)
        {\Big)}
        \backslash\{0\}\,\right] , \\[0.2in]
		\,\partial^\infty (\varphi \circ f)(\bar x)
		\subset \bigcup
		\left\{ \, y \cdot \Limsup_{x \rightarrow \bar x}\partial f(x) \,\middle|\, y \in \Limsup_{x \rightarrow_{(\varphi\circ f)} \,\bar x}\mathcal N_{\dom\varphi} (f(x)) \,\right\}
        \cup
        \left[\,{\Big(}
            {\displaystyle\Limsup_{x \rightarrow \bar x}}^\infty \partial f(x)
        {\Big)}
        \backslash\{0\}\,\right].
	\end{array}
	\]	
\end{lemma}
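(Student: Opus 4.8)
The plan is to reduce both inclusions to a single computation of the limiting normal cone $\mathcal N_{\operatorname{epi}(\varphi\circ f)}(\bar x,\bar\alpha)$ at $\bar\alpha=\varphi(f(\bar x))$, using the epigraphical characterizations $v\in\partial(\varphi\circ f)(\bar x)\iff(v,-1)\in\mathcal N_{\operatorname{epi}(\varphi\circ f)}(\bar x,\bar\alpha)$ and $v\in\partial^\infty(\varphi\circ f)(\bar x)\iff(v,0)\in\mathcal N_{\operatorname{epi}(\varphi\circ f)}(\bar x,\bar\alpha)$ (cf. \cite[Theorem 8.9]{rockafellar2009variational}). Since $\varphi\circ f$ is lsc, the limiting normal cone is the outer limit of proximal normal cones at nearby epigraph points. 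I would invoke Lemma \ref{lem:prox_normal_cone} to place every such proximal normal at a point lying exactly on the graph, i.e. at $(x^j,\varphi(f(x^j)))$ with $x^j\rightarrow_{(\varphi\circ f)}\bar x$; this is precisely why that lemma was proved, as it forces the intermediate height to equal $f(x^j)$ in the factorization below. It then suffices to factorize a single such normal and pass to the limit.

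For the single-point factorization, which is the heart of the argument, I would use that $\varphi$ is nondecreasing to write $\operatorname{epi}(\varphi\circ f)=\{(x,\alpha)\mid\exists\,t,\ (x,t)\in\operatorname{epi}f,\ (t,\alpha)\in\operatorname{epi}\varphi\}$ and exploit this product structure to split a proximal normal $(v^j,-\lambda_j)$ (with $\lambda_j\ge0$) into a normal to $\operatorname{epi}f$ and a normal to $\operatorname{epi}\varphi$ coupled by an intermediate multiplier $\mu_j\ge0$, namely $(v^j,-\mu_j)\in\mathcal N_{\operatorname{epi}f}(x^j,f(x^j))$ and $(\mu_j,-\lambda_j)\in\mathcal N_{\operatorname{epi}\varphi}(f(x^j),\varphi(f(x^j)))$. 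Because $\varphi$ is convex, the second relation is explicit: for $\lambda_j>0$ one has $\mu_j=\lambda_j y_j$ with $y_j\in\partial\varphi(f(x^j))$, while for $\lambda_j=0$ one has $\mu_j\in\mathcal N_{\dom\varphi}(f(x^j))=\partial^\infty\varphi(f(x^j))$; the hypothesis $\sup\varphi=+\infty$ is what keeps this characterization clean by excluding spurious vertical normals away from $\operatorname{bdry}(\dom\varphi)$. Correspondingly $(v^j,-\mu_j)\in\mathcal N_{\operatorname{epi}f}$ gives $v^j\in\mu_j\,\partial f(x^j)$ when $\mu_j>0$ and $v^j\in\partial^\infty f(x^j)$ when $\mu_j=0$.

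With the per-point factorization in hand, I would pass to the limit using the same case analysis as in the proof of Theorem \ref{thm:necessary_cond}. For the subdifferential inclusion ($\lambda_j\equiv1$), the multiplier $y_j\ge0$ is either bounded along a subsequence, yielding $v\in y\cdot\Limsup_{x\rightarrow\bar x}\partial f(x)$ for some $y\in\Limsup_{x\rightarrow_{(\varphi\circ f)}\bar x}\partial\varphi(f(x))$, or unbounded, in which case dividing by $y_j$ (equivalently normalizing $v^j/|v^j|$) produces a nonzero element of ${\Limsup_{x\rightarrow\bar x}}^\infty\partial f(x)$, exactly as in \eqref{eq:horizon2normal}. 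The horizon inclusion ($\lambda_j\to0$) is handled in parallel, landing either in $y\cdot\Limsup_{x\rightarrow\bar x}\partial f(x)$ with $y\in\Limsup_{x\rightarrow_{(\varphi\circ f)}\bar x}\mathcal N_{\dom\varphi}(f(x))$ or again in the horizon term. The stated qualification—that the only $y\in\Limsup_{x\rightarrow_{(\varphi\circ f)}\bar x}\mathcal N_{\dom\varphi}(f(x))$ with $0\in y\cdot\Limsup_{x\rightarrow\bar x}\partial f(x)$ is $y=0$—is precisely what discards the degenerate branch in which the multiplier blows up while $v^j\to0$, so that the limit remains a genuine element of the asserted union.

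The hard part will be the single-point splitting itself when $\varphi$ is flat at $f(x^j)$ (so $0\in\partial\varphi$) or when the intermediate multiplier $\mu_j$ degenerates: making the factorization exact in general seems to require extracting the intermediate normal to $\operatorname{epi}f$ only after a secondary limit, so one must work with limiting subgradients $\partial f(x^j)$ at the nearby points rather than proximal ones and track two nested limits. Controlling $\mu_j$ and the direction normalization through this double limiting process, and invoking the qualification at exactly the right moment, is where I expect the argument to demand the most care; Lemma \ref{lem:prox_normal_cone} is the enabling device that keeps the bookkeeping honest by guaranteeing each proximal normal is read off at a true graph point.
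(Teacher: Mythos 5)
Your reduction — epigraphical characterizations of $\partial$ and $\partial^\infty$, plus viewing $\operatorname{epi}(\varphi\circ f)$ as the projection of $(\operatorname{epi}f\times\R)\cap(\R^n\times\operatorname{epi}\varphi)$ — is in substance the same as the paper's, which writes $\varphi(f(x))=\inf_\alpha[\,\delta_{\operatorname{epi}f}(x,\alpha)+\varphi(\alpha)]$ and applies the parametric-minimization rule \cite[Theorem 10.13]{rockafellar2009variational}. The gap is in the step you yourself call the heart of the argument: the per-point splitting of a normal $(v^j,-\lambda_j)$ to $\operatorname{epi}(\varphi\circ f)$ into $(v^j,-\mu_j)\in\mathcal N_{\operatorname{epi}f}$ and $(\mu_j,-\lambda_j)\in\mathcal N_{\operatorname{epi}\varphi}$ is exactly the sum rule for $\partial(\delta_{\operatorname{epi}f}+\varphi)$, i.e.\ a normal-cone intersection rule, and it is not valid without the qualification $\mathcal N_{\operatorname{epi}f}(\bar x,\bar\alpha)\cap\big(\{0\}\times[-\mathcal N_{\dom\varphi}(\bar\alpha)]\big)=\{0\}$ at the relevant intermediate heights $\bar\alpha$. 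Deriving that qualification from the lemma's hypothesis is the bulk of the paper's proof (display \eqref{eq:sum_rule} and its two cases); you invoke the hypothesis only at the final limit stage to discard a degenerate branch, which does not license the factorization you start from.

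The second, related problem is that the intermediate height is not pinned to $f(x^j)$. When $\varphi$ is flat, the argmin set $M(\bar x)$ is an interval $[f(\bar x),\bar\alpha_{\max}]$, and the projection rule forces you to examine $\mathcal N_{\operatorname{epi}f}(\bar x,\bar\alpha)$ at points $\bar\alpha>f(\bar x)$, where \cite[Theorem 8.9]{rockafellar2009variational} does not apply. That is where Lemma \ref{lem:prox_normal_cone} is actually deployed in the paper: applied to $\operatorname{epi}f$ (not to $\operatorname{epi}(\varphi\circ f)$, as you propose), it pulls proximal normals at $(\,x,\alpha)$ with $\alpha>f(x)$ down to graph points of $f$, so that for heights reachable as limits of $f(x^k)$ (the set $M_1(\bar x)$) one recovers $\Limsup_{x\to\bar x}\partial f(x)$ and its horizon limit, while unreachable heights ($M_2(\bar x)$) contribute only horizon directions. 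Your claim that the lemma ``forces the intermediate height to equal $f(x^j)$'' is incorrect, and without the $M_1/M_2$ analysis the contribution of the flat part of $\varphi$ — which is precisely what produces $\Limsup_{x\rightarrow_{(\varphi\circ f)}\bar x}\partial\varphi(f(x))$ rather than $\partial\varphi(f(\bar x))$, together with the bare horizon term — is unaccounted for. The skeleton is right, but these two missing ingredients are where the proof actually lives.
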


\begin{proof}[Proof of Lemma \ref{lem:chainrule_univar}.]
    The basic idea is to rewrite $\varphi \circ f$ as a parametric minimization problem and apply \cite[Theorem 10.13]{rockafellar2009variational}. Note that $\varphi \big(f(x)\big) = \inf_{\alpha} \; [g(x, \alpha) \triangleq \delta_{\operatorname{epi} f}(x, \alpha) + \varphi(\alpha)]$ for $x \in \dom (\varphi \circ f)$. Define the corresponding set of optimal solutions as {$M(x)$} for any $x \in \dom (\varphi \circ f)$. Then, we have $f(\bar x) \in {M(\bar x)}$ and $\varphi(\alpha) = \varphi(f(\bar x))$ for any $\alpha \in {M(\bar x)}$. 
    {By our assumptions, it is obvious that $\dom\varphi \in \{(-\infty, b), (-\infty,b]\}$ for some $b \in \R \cup \{+\infty\}$.}
    Based on our assumption that $\sup\varphi = +\infty$ and $f$ is lsc, it is easy to verify that $g$ is proper, lsc, and level-bounded in $\alpha$ locally uniformly in $x$. Then we apply \cite[Theorem 10.13]{rockafellar2009variational} to obtain
    \begin{equation}\label{eq:chain_rule}
         \partial (\varphi \circ f)(\bar x) \subset \left\{ v \mid (v, 0) \in \partial g(\bar x, \bar\alpha), \bar\alpha \in M(\bar x)\right\}
         ,\;
         \partial^\infty (\varphi \circ f)(\bar x) \subset \left\{ v \mid (v, 0) \in \partial^\infty g(\bar x, \bar\alpha), \bar\alpha \in M(\bar x)\right\}.
    \end{equation}

	{\underline{\em Step 1:}} We will show that for any $\bar\alpha \in {M(\bar x)}$,
	\begin{equation}\label{eq:sum_rule}
		\mathcal N_{\operatorname{epi}f}(\bar x, \bar\alpha) \cap \big(\,\{0\} \times [-\mathcal N_{\dom\varphi}(\bar\alpha)]\,\big) = \{0\}.
	\end{equation}
    We divide the proof of \eqref{eq:sum_rule} into two cases.

    \gap
    \noindent\underline{\textsl{Case 1.}} If ${M(\bar x)}$ is a singleton $\{f(\bar x)\}$, we can characterize $\mathcal N_{\operatorname{epi}f}(\bar x, f(\bar x))$ by using the result in \cite[Theorem 8.9]{rockafellar2009variational}. 
    Since $\partial f(\bar x) \subset \operatorname{Lim \, sup}_{x \rightarrow \bar x} \partial f(x)$ and $\mathcal{N}_{\dom\varphi}(f(\bar x)) \subset \operatorname{Lim \, sup}_{x \rightarrow_{(\varphi\circ f)} \,\bar x} \,\mathcal{N}_{\dom\varphi}(f(x))$,
    it follows from our assumption that either $0 \notin \partial f(\bar x)$ or $\mathcal N_{\dom\varphi} (f(\bar x)) = \{0\}$. Hence, based on the characterization of $\mathcal N_{\operatorname{epi}f} (\bar x, f(\bar x))$, \eqref{eq:sum_rule} is satisfied.
    
    \gap
    
    \noindent\underline{\textsl{Case 2.}} Otherwise, there exists $\bar\alpha_{\max} \in (f(\bar x), +\infty)$ such that ${M(\bar x)} = [f(\bar x), \bar\alpha_{\max}]$ since $\varphi$ is lsc, nondecreasing and $\sup\varphi = +\infty$. Thus, from \eqref{eq:chain_rule},
    \begin{equation}\label{eq:chainrule}
	\begin{split}
		\partial (\varphi \circ f)(\bar x) &\subset
		\big[\{ v \mid (v, 0) \in \partial g(\bar x, f(\bar x))\}
        \cup
        \{ v \mid (v, 0) \in \partial g(\bar x, \bar\alpha), f(\bar x) < \bar\alpha \leq \bar\alpha_{\max}\}\big],\\[0.08in]
        \partial^\infty (\varphi \circ f)(\bar x) &\subset
		\big[\{ v \mid (v, 0) \in \partial^\infty g(\bar x, f(\bar x))\}
        \cup
        \{ v \mid (v, 0) \in \partial^\infty g(\bar x, \bar\alpha), f(\bar x) < \bar\alpha \leq \bar\alpha_{\max}\}\big].
        \end{split}
    \end{equation}
    Let ${M_1(\bar x)} \triangleq \left\{ \bar\alpha \in (f(\bar x), \bar\alpha_{\max}] \,\middle|\, \exists \,x^k \rightarrow \bar x \text{ with } f(x^k) \rightarrow \bar\alpha\right\}$ and ${M_2(\bar x) \triangleq M(\bar x) \backslash M_1(\bar x)}$. 
    In the following, we characterize $\mathcal{N}_{\operatorname{epi}f} (\bar x, \bar\alpha)$ and verify \eqref{eq:sum_rule} separately for $\bar\alpha \in {M_1(\bar x)}$ and $\bar\alpha \in {M_2(\bar x)}$.

    \gap
	\noindent\underline{\textsl{Case 2.1.}} For any $\bar\alpha \in {M_1(\bar x)}$, we first prove the inclusion:
    \begin{equation}\label{eq:2.1normalcone_epi}
        \mathcal N_{\operatorname{epi}f}(\bar x, \bar\alpha)
		\subset \bigg[\left\{ \lambda (v, -1) \,\middle|\, v \in \Limsup_{x \rightarrow \bar x} \partial f(x), \lambda > 0 \right\} \cup \left\{ (v, 0) \,\middle|\, v \in {\displaystyle\Limsup_{x \rightarrow \bar x}}^\infty \partial f(x) \right\}\bigg].
    \end{equation}
    Observe that for any $\bar\alpha \in {M_1(\bar x)}$, it holds that
    \begin{equation}\label{eq:normals}
\mathcal{N}_{\operatorname{epi}f}(\bar x, \bar\alpha)
        \subset \Limsup_{(x, \alpha)(\in \operatorname{epi}f) \rightarrow (\bar x, \bar\alpha)} \;\mathcal{N}^p_{\operatorname{epi}f} (x, \alpha)
    \subset  \Limsup_{x \rightarrow \bar x} \;\mathcal{N}^p_{\operatorname{epi}f} (x,f(x))
        \subset \Limsup_{x \rightarrow \bar x} \;\mathcal{N}_{\operatorname{epi}f} (x,f(x)),
    \end{equation}
    where the first inclusion is because any normal vector is a limit of proximal normals at nearby points \cite[Exercise 6.18]{rockafellar2009variational}; the second one uses Lemma \ref{lem:prox_normal_cone}; 
 the last inclusion follows {from the fact that the proximal normal cone is a subset of the limiting normal cone \cite[Example 6.16]{rockafellar2009variational}}. Based on the the result of \cite[Theorem 8.9]{rockafellar2009variational} that
    \[
    \mathcal{N}_{\operatorname{epi}f} (x,f(x))
        = \left\{ \lambda (v, -1) \,\middle|\, v \in\partial f(x), \lambda > 0 \right\} \cup \left\{ (v, 0) \,\middle|\, v \in \partial^\infty f(x) \right\},
    \]
    we conclude that $\mathcal{N}_{\operatorname{epi}f} (\bar x,\bar\alpha) \subset \R^n \times \R_-$ for any $\bar\alpha \in {M_1(\bar x)}$. For any $(v, -1) \in \mathcal{N}_{\operatorname{epi}f} (\bar x,\bar\alpha)$ with $\bar\alpha \in {M_1(\bar x)}$, there exist $x^k \rightarrow \bar x$, $v^k \rightarrow v$ with $v^k \in \partial f(x^k)$. Then $v \in \operatorname{Lim \, sup}_{x \rightarrow \bar x} \partial f(x)$.
    
    To prove \eqref{eq:2.1normalcone_epi}, it remains to show that $v \in \operatorname{Lim \, sup}_{x \rightarrow \bar x}^\infty \partial f(x)$ whenever $(v, 0) \in \mathcal{N}_{\operatorname{epi}f} (\bar x, \bar\alpha)$. It follows from \eqref{eq:normals} that $(v, 0)$ is a limit of proximal normals of $\operatorname{epi}f$ at $(x^k, f(x^k))$ for some sequence $x^k \rightarrow \bar x$. 
    (\romannumeral1) First consider the case $(v^k, 0) \rightarrow (v, 0)$ with $(v^k, 0) \in \mathcal{N}^p_{\operatorname{epi}f} (x^k,f(x^k))$. Following the argument in the proof of \cite[Theorem 8.9]{rockafellar2009variational}, we can derive $v^k \in \partial^\infty f(x^k)$. Therefore,
    \[
        v \in \Limsup_{k \rightarrow +\infty} \;\partial^\infty f(x^k)
        \;\subset\; \Limsup_{k \rightarrow +\infty} \left( \bigcup_{ x^{k,i} \rightarrow_f \, x^k}{\displaystyle\Limsup_{i \rightarrow +\infty}}^\infty \partial f(x^{k,i})\right)
        \;\subset\; \bigcup_{ x^j \rightarrow \bar x} {\displaystyle\Limsup_{j \rightarrow +\infty}}^\infty \;\partial f(x^j),
    \]
    where the first inclusion is due to the definition of the horizon subdifferential, and the last inclusion follows from a standard diagonal extraction procedure. 
    (\romannumeral2) In the other case, we have $\lambda_k (v^k, -1) \rightarrow (v, 0)$ with $\lambda_k \downarrow 0$ and $v^k \in \partial f(x^k)$ for all ${k \in \N}$. It is easy to see $v \in \operatorname{Lim \, sup}_{x \rightarrow \bar x}^\infty \partial f(x)$. So far, we obtain inclusion \eqref{eq:2.1normalcone_epi}. Since $\bar\alpha \in {M_1(\bar x)}$, we have $\mathcal{N}_{\dom\varphi} (\bar\alpha) \subset \operatorname{Lim \, sup}_{x \rightarrow_{(\varphi \circ f)} \bar x} \,\mathcal{N}_{\dom\varphi}(f(x))$, and our assumption implies that $\lambda = 0$ is the unique solution satisfying $0 \in \lambda \cdot \operatorname{Lim \, sup}_{x \rightarrow \bar x} \partial f(x)$ with $\lambda \in \mathcal{N}_{\dom\varphi}(\bar\alpha)$. Combining this with \eqref{eq:2.1normalcone_epi}, we immediately obtain \eqref{eq:sum_rule}.

    \gap
    \noindent\underline{\textsl{Case 2.2.}} For any $\bar\alpha \in {M_2(\bar x)}$, consider any sequence $\big\{(x^k, \alpha^k)\big\}  \subset \operatorname{epi}f$ converging to $(\bar x, \bar\alpha)$. Then $\alpha^k > f(x^k)$ for all sufficiently large $k$ since $\bar\alpha \notin {M_1(\bar x)}$. It is easy to see that $\mathcal{N}^p_{\operatorname{epi}f} (x^k, \alpha^k) \subset \R^n \times \{0\}$, which gives us $\mathcal{N}_{\operatorname{epi}f} (x^k, \alpha^k) \subset \R^n \times \{0\}$ due to \cite[Exercise 6.18]{rockafellar2009variational}. By following a similar pattern as the final part of Case 2.1, it is not difficult to obtain, for any $\bar\alpha \in {M_2(\bar x)}$,
    \begin{equation}\label{eq:2.2normalcone_epi}
        \mathcal N_{\operatorname{epi}f}(\bar x, \bar\alpha)
		\subset \left\{ (v, 0) \,\middle|\, v \in {\Limsup_{x \rightarrow \bar x}}^\infty \partial f(x) \right\}.
    \end{equation}
	In this case, \eqref{eq:sum_rule} holds trivially. {Hence, we have verified \eqref{eq:sum_rule} for cases 1 and 2.

    \gap\gap
    
    \underline{\em Step 2:} Based on \eqref{eq:sum_rule} in step 1, we can now apply the sum rule \cite[corollary 10.9]{rockafellar2009variational} for $\partial g(\bar x, \bar\alpha)$ to obtain
    \begin{equation}\label{eq:sum_rule_g}
        \partial  g(\bar x, \bar\alpha) \subset \mathcal{N}_{\operatorname{epi} f} (\bar x, \bar\alpha) + \{0\} \times \partial\varphi(\bar\alpha), \qquad
        \partial^\infty  g(\bar x, \bar\alpha) \subset \mathcal{N}_{\operatorname{epi} f} (\bar x, \bar\alpha) + \{0\} \times \mathcal{N}_{\dom\varphi}(\bar\alpha).
    \end{equation}
    
    \noindent\underline{\textsl{Case 1.}} For ${M(\bar x)} = \{f(\bar x)\}$, by combining \eqref{eq:sum_rule_g} with \eqref{eq:chain_rule}, we can derive the stated results for $\partial(\varphi \circ f)(\bar x)$ and $\partial^\infty (\varphi \circ f)(\bar x)$ based on the observations that $\partial\varphi(f(\bar x)) \subset \operatorname{Lim \, sup}_{x \rightarrow_{(\varphi\circ f)} \bar x} \varphi(f(x))$ and $\partial^\infty f(\bar x) \subset \operatorname{Lim \, sup}_{x \rightarrow \bar x}^\infty \partial f(x)$.

    \gap
    
    \noindent\underline{\textsl{Case 2.}} Otherwise, by \eqref{eq:sum_rule_g}}, we have
	\[
	\begin{array}{cl}
		& \{ v \mid (v, 0) \in \partial g(\bar x, \bar\alpha), f(\bar x) < \bar\alpha \leq \bar\alpha_{\max}\}\\
        \overset{\eqref{eq:2.1normalcone_epi} \eqref{eq:2.2normalcone_epi}}{\subset}&
        \bigcup \left\{ y \cdot \Limsup_{x \rightarrow \bar x} \partial f(x)\,\middle|\, y \in \partial\varphi(\bar\alpha), \bar\alpha \in {M_1}(\bar x) \right\} \cup
        \left[\, {\bigcup } \left\{{\displaystyle\Limsup_{x \rightarrow \bar x}}^\infty \partial f(x) \,\middle|\, 0 \in \partial\varphi(\bar\alpha), f(\bar x) < \bar\alpha \leq \bar\alpha_{\max}\right\} \right]\\[0.15in]
		\subset&
		\bigcup \left\{\, y \cdot \Limsup_{x \rightarrow \bar x}\partial f(x) \,\middle|\, y \in \Limsup_{x \rightarrow_{(\varphi\circ f)} \,\bar x}\partial \varphi(f(x)) \,\right\} \cup \left[\,{\Big(}
            {\displaystyle\Limsup_{x \rightarrow \bar x}}^\infty \partial f(x)
        {\Big)} \backslash\{0\}\,\right],
	\end{array}
	\]
    {where the last inclusion is because $0$ will be included in the first set if $0 \in \partial\varphi(\bar\alpha)$ for some $\bar\alpha \in(f(\bar x), \bar\alpha_{\max}]$ and the second set will be empty otherwise. }Similarly,
	\[
	\begin{array}{rl}
		& \{ v \mid (v, 0) \in \partial g^\infty(\bar x, \bar\alpha), f(\bar x) < \bar\alpha \leq \bar\alpha_{\max}\} \\[0.08in]
		\subset&
		\bigcup \left\{\, y \cdot \Limsup_{x \rightarrow \bar x}\partial f(x) \,\middle|\, y \in \Limsup_{x \rightarrow_{(\varphi\circ f)} \,\bar x}\mathcal{N}_{\dom\varphi}(f(x)) \,\right\} \cup \left[\,{\Big(}
            {\displaystyle\Limsup_{x \rightarrow \bar x}}^\infty \partial f(x)
        {\Big)}\backslash\{0\}\,\right].
	\end{array}
	\]
	We then complete the proof by using the inclusions in  \eqref{eq:chainrule}. 
\end{proof}

Equipped with the chain rules, we are now ready to prove Proposition \ref{prop:assumption5}.
\gap

\begin{proof}[Proof of Proposition \ref{prop:assumption5}.]
    Let $\bar x$ be any feasible point, i.e., $\bar x \in {\bigcap_{p=1}^{m}} \dom F_p$. Suppose for contradiction that \eqref{eq:CQ1} does not hold at $\bar x$. Thus, there exist $p_1 \in\{1, \cdots, m\}$, $\{x^k\} \in S_{p_1}(\bar{x})$ and an index set $N \in \mathbb{N}_\infty^\sharp$ such that $0 \in \partial_{C} f^k_{p_1}(x^k)$ and $\mathcal{N}_{\dom\varphi_{p_1}}\left(f^k_{p_1}(x^k)\right) \neq \{0\}$ for all $k \in N$. 
    Take an arbitrary nonzero scalar $y^k \in \mathcal{N}_{\dom\varphi_{p_1}}\left(f^k_{p_1}(x^k)\right)$ for all $k \in N$. Let $\widetilde y$ be any accumulation point of the unit scalars $\{y^k/|y^k|\}_{k \in N}$. Then, we have $(0 \neq)\widetilde y \in \bigcup \{\mathcal{N}_{\dom\varphi_{p_1}}(t_{p_1}) \mid t_{p_1} \in T_{p_1}(\bar x)\}$ and $0 \in \Conv\partial_{A} f_{p_1} (\bar x)$, contradicting Assumption 5. This proves condition \eqref{eq:CQ1}.

    For any fixed $p = 1,\cdots,m$, let $y_{p^\prime} = 0$ for any $p^\prime \in \{1, \cdots, m\} \backslash \{p\}$ in Assumption 5. Then the only scalar $y_p \in \bigcup \left\{\mathcal{N}_{\dom\varphi_{p}} (t_p) \mid t_p \in T_p(\bar{x}) \right\}$ with $0 \in y_p \, \Conv\partial_{A} f_{p}(\bar x)$ is $y_p = 0$, which completes the proof of \eqref{eq:CQ2}.
    
    To derive the constraint qualification \eqref{eq:CQ3}, we consider two cases.
    
    \noindent\underline{\textsl{Case 1.}} For $p \in I_2$, we have $\mathcal{N}_{\dom\varphi_p} (f_p(\bar x)) \subset \bigcup \big\{\mathcal{N}_{\dom\varphi_p}(t_p) \mid t_p \in T_p(\bar x)\big\}$ due to $f^k_p \Epiconv f_p$ and $\partial(y f_p)(\bar x) \subset y \,\partial_C f_p(\bar x) \subset y \cdot \Conv\partial_{A} f_p(\bar x)$ for any $y$ by {Theorem \ref{thm:eADC_subdiff}(a)}. Together with Assumption 5, we deduce that the only scalar $y \in \mathcal{N}_{\dom\varphi_p}(f_p(\bar x))$ with $0 \in \partial(y f_p)(\bar x)$ is $y = 0$. From this condition and the local Lipschitz continuity of $f_p$ for $p \in I_2$, we can apply the chain rule \cite[Theorem 10.49]{rockafellar2009variational} to get
    \begin{equation}\label{eq:horizon1}
        \partial^\infty(\varphi_p \circ f_p)(\bar x) 
        \subset \bigcup
        \left\{ y \cdot \Conv\partial_{A} f_p(\bar x) \mid y \in \mathcal{N}_{\dom\varphi_p}(t_p), t_p \in T_p(\bar{x}) \right\}.
    \end{equation}
    
    \noindent\underline{\textsl{Case 2.}} For $p \in I_1$, to utilize the chain rules ({Lemma} \ref{lem:chainrule_univar}) for $\partial^\infty(\varphi_p \circ f_p)$, we must first confirm the validity of the condition:
    \begin{equation}\label{eq:prop10_case2}
        \left[0 \in y \cdot \Limsup_{x \rightarrow \bar x} \partial f_p(x), \quad y \in \Limsup_{x \rightarrow_{F_p} \bar x} \;\mathcal{N}_{\dom\varphi_p} (f_p(x))\right]
        \quad\Longrightarrow\quad y = 0.
    \end{equation}
    Indeed, it suffices to consider the case of $\dom\varphi^\uparrow_p = (-\infty,r_p)$ or $(-\infty,r_p]$ for some $r_p \in \R$, because the statement holds trivially when $\varphi^\uparrow_p$ is real-valued. For any element $\bar y \in \operatorname{Lim \, sup}_{x \rightarrow_{F_p} \,\bar x} \,\mathcal N_{\dom\varphi_p} (f_p(x))$, there exist $(x^k, y^k) \rightarrow (\bar x, \bar y)$ with $y^k \in \mathcal{N}_{\dom\varphi_p} (f_p(x^k))$ and $F_p(x^k) \rightarrow F_p(\bar x)$. Since $\bar x \in \dom F_p$, we must have $x^k \in \dom F_p$ for all sufficiently large k, i.e., $f_p(x^k) \in \dom\varphi^\uparrow_p$, and $\{f_p(x^k)\}_{k \in \N}$ is bounded from above {due to $\dom\varphi^\uparrow_p = (-\infty,r_p)$ or $(-\infty,r_p]$}. The sequence $\{f_p(x^k)\}_{k \in \N}$ is also bounded from below since $f_p$ is lsc as a consequence of $f^k_p \Epiconv f_p$. Then, we can assume that the bounded sequence $\{f_p(x^k)\}_{k \in \N}$ converges to some $\bar z_p$. Note that $\bar z_p \in \dom\varphi_p$ due to $F_p(\bar x) = \liminf_{k \rightarrow +\infty} \varphi_p(f_p(x^k)) \geq \varphi_p(\bar z_p)$. 
    Thus, by the outer semicontinuity, $y^k \rightarrow \bar y\in \mathcal{N}_{\dom\varphi_p}(\bar z_p)$. By $f^k_p \Epiconv f_p$, each $f_p(x^k)$ can be expressed as the limit of a sequence $\{f^i_p(x^{k,i})\}_{i \in \N}$ with $x^{k,i} \rightarrow x^k$ for any fixed ${k \in \N}$. Using a standard diagonal extraction procedure, one can extract a subsequence $f^{i_k}_p(x^{k,i_k}) \rightarrow \bar z_p$ with $x^{k,i_k} \rightarrow \bar x$. Hence, $\bar z_p \in T_p(\bar x)$ and
    \begin{equation}\label{eq:embed_NormalCone}
        \Limsup_{x \rightarrow_{F_p} \,\bar x}\, \mathcal{N}_{\dom\varphi_p} (f_p(x)) \subset \bigcup \{\mathcal{N}_{\dom\varphi_p}(t_p) \mid t_p \in T_p(\bar{x})\}.
    \end{equation}
    Using the subdifferentials relationships in {Theorem} \ref{thm:eADC_subdiff} and the outer semicontinuity of $\partial_{A} f_p$, we  have
    \begin{equation}\label{eq:embed_SubdiffPhi}
        \displaystyle\Limsup_{x \rightarrow \bar x} \;\partial f_p(x) \subset \Limsup_{x \rightarrow \bar x} \;\partial_A f_p(x) = \partial_{A} f_p(\bar x).
    \end{equation}
    By \eqref{eq:embed_NormalCone}, \eqref{eq:embed_SubdiffPhi} and Assumption 5, we immediately get \eqref{eq:prop10_case2}. Thus, we can apply the chain rule in {Lemma} \ref{lem:chainrule_univar}, and use \eqref{eq:embed_NormalCone}, \eqref{eq:embed_SubdiffPhi} again to obtain
    \begin{equation}\label{eq:horizon2}
    \begin{array}{rl}
        \partial^\infty (\varphi_p \circ f_p)(\bar x)
        &\displaystyle\subset \bigcup \left\{ \, y \, \partial_{A} f_p(x) \,\middle|\, y \in \mathcal N_{\dom\varphi_p} (t_p),\, t_p \in T_p(\bar{x}) \right\} \cup \left[\,{\displaystyle\Limsup_{x \to \bar x}}^\infty {\partial} f_p(x) \backslash\{0\}\,\right]\\[0.12in]
        &\displaystyle\subset \bigcup \left\{ \, y \, \partial_{A} f_p(x) \,\middle|\, y \in \mathcal N_{\dom\varphi_p} (t_p),\, t_p \in T_p(\bar{x}) \right\} \cup \left[\,\partial^\infty_{A} f_p(\bar x) \backslash\{0\}\,\right].
    \end{array}
    \end{equation}
    For the last inclusion, we use $\operatorname{Lim \, sup}_{x \rightarrow \bar x}^\infty \;\partial f_p(x) \subset \operatorname{Lim \, sup}_{x \rightarrow \bar x}^\infty \;\partial_A f_p(x) \subset \partial^\infty_A f_p(\bar x)$ by Theorem \ref{thm:eADC_subdiff}(a) and using a standard diagonal extraction procedure. Combining inclusions \eqref{eq:horizon1}, \eqref{eq:horizon2} for two cases with Assumption 5, we derive \eqref{eq:CQ3} and complete the proof. 
\end{proof}




\end{document}